\newtheorem{theorem}{Theorem}[section]
\newtheorem{lemma}[theorem]{Lemma}
\newtheorem{corollary}[theorem]{Corollary}
\theoremstyle{definition}
\newtheorem{remark}[theorem]{Remark}
\numberwithin{equation}{section}
\begin{document}
\title[Certain product formulas and values of hypergeometric series]
{Certain product formulas and values of Gaussian hypergeometric series}



\author{Mohit Tripathi}
\address{Department of Mathematics, Indian Institute of Technology Guwahati, North Guwahati, Guwahati-781039, Assam, INDIA}
\curraddr{}
\email{m.tripathi@iitg.ac.in}
 
\author{Rupam Barman}
\address{Department of Mathematics, Indian Institute of Technology Guwahati, North Guwahati, Guwahati-781039, Assam, INDIA}
\curraddr{}
\email{rupam@iitg.ac.in}
\thanks{We thank the anonymous referee for his/her thorough review and highly appreciate the comments and suggestions, which significantly contributed to improving the quality of the paper. The second author is partially supported by a research grant under the MATRICS scheme of SERB, Department of Science and Technology, Government of India.}


\subjclass[2010]{33C05, 33C20, 11T24.}
\date{Research in Number Theory, vol. 6 (2020)}
\keywords{Hypergeometric series; Gauss and Jacobi sums; Hypergeometric series over finite fields.}
\begin{abstract}
In this article we find finite field analogues of certain product formulas satisfied by the classical hypergeometric series. 
We express product of two ${_2}F_1$-Gaussian hypergeometric series as ${_4}F_3$- and ${_3}F_2$-Gaussian hypergeometric series. 
We use properties of Gauss and Jacobi sums and our earlier works on finite field Appell series to deduce these product formulas satisfied by the Gaussian hypergeometric series. 
We then use these transformations to evaluate explicitly some special values of ${_4}F_3$- and ${_3}F_2$-Gaussian hypergeometric series. 
By counting points on CM elliptic curves over finite fields, Ono found certain special values of ${_2}F_1$- and ${_3}F_2$-Gaussian hypergeometric series containing trivial 
and quadratic characters as parameters. Later, Evans and Greene found special values of certain ${_3}F_2$-Gaussian hypergeometric series containing arbitrary characters as parameters from where 
some of the values obtained by Ono follow as special cases. We show that some of the results of Evans and Greene follow from our product formulas including a finite field analogue of the classical Clausen's identity. 
\end{abstract}
\maketitle
\section{Introduction and statement of results}
For a complex number $a$, the rising factorial is defined as $(a)_0=1$ and $(a)_k=a(a+1)\cdots (a+k-1), ~k\geq 1$.
For a non-negative integer $n$, and $a_i, b_i\in\mathbb{C}$ with $b_i\notin\{\ldots, -3,-2,-1, 0\}$,
the (generalized) hypergeometric series ${_{n+1}}F_{n}$ is defined by
\begin{align}\label{hyper}
{_{n+1}}F_{n}\left(\begin{array}{cccc}
                   a_1, & a_2, & \ldots, & a_{n+1} \\
                    & b_1, & \ldots, & b_n
                 \end{array}\mid x
\right):=\sum_{k=0}^{\infty}\frac{(a_1)_k\cdots (a_{n+1})_k}{(b_1)_k\cdots(b_n)_k}\cdot\frac{x^k}{k!},
\end{align}
which converges absolutely for $|x|<1$.
In 1980s, Greene \cite{greene-thesis, greene} introduced a finite field, character sum analogue of classical hypergeometric series that satisfies summation and transformation
properties similar to those satisfied by the classical hypergeometric series. Let $p$ be an odd prime, and let $\mathbb{F}_q$ denote the finite field with $q$ elements, where $q=p^r, r\geq 1$.
Let $\widehat{\mathbb{F}_q^{\times}}$ be the group of all multiplicative characters on $\mathbb{F}_q^{\times}$. We extend the domain of each $\chi\in \widehat{\mathbb{F}_q^{\times}}$ to $\mathbb{F}_q$ by 
setting $\chi(0)=0$ including the trivial character $\varepsilon$. For multiplicative characters $A$ and $B$ on $\mathbb{F}_q$, the binomial coefficient ${A \choose B}$ is defined by
\begin{align}\label{eq-0}
{A \choose B}:=\frac{B(-1)}{q}J(A,\overline{B})=\frac{B(-1)}{q}\sum_{x \in \mathbb{F}_q}A(x)\overline{B}(1-x),
\end{align}
where $J(A, B)$ denotes the usual Jacobi sum and $\overline{B}$ is the character inverse of $B$. 
For positive integer $n$, Greene \cite{greene} defined the ${_{n+1}}F_n$-hypergeometric series over
$\mathbb{F}_q$ by
\begin{align}\label{Greene-def-4}
{_{n+1}}F_n\left(\begin{array}{cccc}
                A_0, & A_1, & \ldots, & A_n\\
                 & B_1, & \ldots, & B_n
              \end{array}\mid x \right)
              =\frac{q}{q-1}\sum_{\chi\in \widehat{\mathbb{F}_q^\times}}{A_0\chi \choose \chi}{A_1\chi \choose B_1\chi}
              \cdots {A_n\chi \choose B_n\chi}\chi(x),
\end{align}
where $A_0, A_1,\ldots, A_n$ and $B_1, B_2,\ldots, B_n$ are multiplicative characters on $\mathbb{F}_q$. Hypergeometric series over finite fields are also known as \textit{Gaussian} hypergeometric series.
\par There are other finite field analogues of the classical hypergeometric series. For example, see \cite{katz, mccarthy3, FL}. For a multiplicative character $\chi$, 
let $g(\chi)$ denote the Gauss sum as defined in Section 2. 
For $A_0, A_1, \ldots A_n, B_1, B_2, \ldots, B_n\in\widehat{\mathbb{F}_q^{\times}}$, the McCarthy's finite field hypergeometric function ${_{n+1}F}^{\ast}_n$ is given by
\begin{align}\label{mcCarty's_defn}
&{_{n+1}F}_n\left(\begin{array}{cccc}
                A_0, & A_1, & \ldots, & A_n\\
                 & B_1, & \ldots, & B_n
              \end{array}\mid x \right)^{\ast}\notag\\
              &=\frac{1}{q-1}\sum_{\chi\in \widehat{\mathbb{F}_q^\times}}\prod_{i=0}^{n}\frac{g(A_i\chi)}{g(A_i)}\prod_{j=1}^n\frac{g(\overline{B_j\chi})}{g(\overline{B_j})}g(\overline{\chi})
\chi(-1)^{n+1}\chi(x).
\end{align}
In \cite[Proposition 2.5]{mccarthy3}, McCarthy proved that his finite field hypergeometric series is closely related to Greene's 
hypergeometric series. To be specific, let $A_0\neq\varepsilon$ and $A_i\neq B_i$ for $1\leq i\leq n$. Then for $x\in \mathbb{F}_q$ we have
\begin{align}\label{prop-300}
&{_{n+1}F}_n\left(\begin{array}{cccc}
                A_0, & A_1, & \ldots, & A_n\\
                 & B_1, & \ldots, & B_n
              \end{array}\mid x \right)^{\ast}\notag\\
              &=\left[\prod_{i=1}^n{A_i \choose B_i}^{-1}\right]{_{n+1}F}_n\left(\begin{array}{cccc}
                A_0, & A_1, & \ldots, & A_n\\
                 & B_1, & \ldots, & B_n
              \end{array}\mid x \right).
              \end{align}

\par In a recent paper \cite{FL}, Fuselier et. al. introduce another version of hypergeometric series over finite fields in a manner that is parallel to that of the classical hypergeometric series
by considering period functions for hypergeometric type algebraic varieties over finite fields. For multiplicative characters $A, B, C$, their ${_2\mathbb{F}_1}$-hypergeometric series is given by 
\begin{align}\label{Fuselier-def}
{_{2}}\mathbb{F}_1\left[\begin{array}{cccc}
A, & B \\
& C
\end{array}\mid x \right]:= \frac{1}{J(B,\overline{B}C)}{_{2}}\mathbb{P}_1\left[\begin{array}{cccc}
A, & B \\
& C
\end{array}\mid x \right],
\end{align}
where
\begin{align}
{_{2}}\mathbb{P}_1\left[\begin{array}{cccc}
A, & B \\
& C
\end{array}\mid x \right]:=\frac{q^2}{(q-1)}BC(-1)\sum_{\chi\in \widehat{\mathbb{F}_q^\times}}{A\chi\choose\chi}{B\chi\choose C\chi}\chi(x) +\delta(x)J(B,\overline{B}C).\notag
\end{align}
Here $\delta$ denotes the function defined on $\mathbb{F}_q$ by $\delta(0)=1$ and $\delta(x)=0$ if $x\neq 0$.
The relationship between the above finite field  hypergeometric series and the Greene's hypergeometric series  is the following:
\begin{align}\label{relation-Fu-Greene}
{_{2}}\mathbb{F}_1\left[\begin{array}{cccc}
A, & B \\
& C
\end{array}\mid x \right]=\frac{qBC(-1)}{J(B,\overline{B}C)}{_{2}}F_1\left(\begin{array}{cccc}
A, & B \\
& C
\end{array}\mid x \right) + \delta(x).
\end{align}
We note that, since we have used the definition of the binomial coefficient given by Greene, the above definition of ${_{2}}\mathbb{P}_1$ series differs from its original definition given in \cite{FL} 
by a factor of $q^2$.

\par Throughout this paper, $A, B, C, D, E, F, S, \chi, \lambda, \psi$ denote multiplicative characters on $\mathbb{F}_q$. Here $\varepsilon$ and $\varphi$ always
denote the trivial and quadratic characters, respectively, while $\chi_4$ denotes a fixed quartic character when $q\equiv 1\pmod{4}$. Also, $\chi_3$ denotes a fixed cubic character when $q\equiv 1\pmod{3}$.
For brevity, if $A$ is a square we write $A=\square$. 
\subsection{Product formulas for Gaussian hypergeometric series}
Greene \cite{greene} found several transformation formulas satisfied by the Gaussian hypergeometric series analogous to those 
satisfied by the classical hypergeometric series. Since then many mathematicians have obtained finite field analogues of transformation and summation identities satisfied by the classical
hypergeometric series (see for example \cite{Evans-2, EG, EG-2, EG-3, FL, mccarthy3, TB}). 
Finite field hypergeometric series are known to be related to various arithmetic objects. Some of the biggest motivations for studying finite field hypergeometric functions
have been their connections with Fourier coefficients and eigenvalues of modular forms and with counting points on certain kinds of algebraic varieties. Assuming the conjecture of van Geemen and van Straten, McCarthy and Papanicolas \cite{MM} related the eigenvalue of the Hecke 
operator of index $p$ of a Siegel eigenform of degree $2$ and level $8$ to ${_{4}}F_3\left(\begin{array}{cccc}
                \varphi, & \varphi, & \varphi, & \varphi\\
                 & \varepsilon, & \varepsilon, & \varepsilon
              \end{array}\mid -1 \right)$. The following identity played a crucial role in their proof:
              \begin{align*}
               {_{4}}F_3\left(\begin{array}{cccc}
                \varphi, & \varphi, & \varphi, & \varphi\\
                 & \varepsilon, & \varepsilon, & \varepsilon
              \end{array}\mid -1 \right)={_{2}}F_1\left(\begin{array}{cc}
                \varphi, & \varphi\\
                 & \varepsilon
              \end{array}\mid -1 \right) \cdot {_{3}}F_2\left(\begin{array}{ccc}
                \chi_4, & \varphi, & \varphi\\
                 & \varepsilon, & \varepsilon
              \end{array}\mid 1 \right).
              \end{align*}
In \cite{EG, EG-1}, Evans and Greene expressed ${_3}F_2$-hypergeometric series as a product of ${_2}F_1$-hypergeometric series over finite fields from where they deduced certain special values 
of ${_3}F_2$-hypergeometric series including a finite field analogue of the Clausen's identity. In this paper, we prove finite field analogues of certain product formulas satisfied by the 
classical hypergeometric series. In the following theorem, we express a ${_4}F_3$-hypergeometric series as a product of two ${_2}F_1$-hypergeometric series over finite fields. 
\begin{theorem}\label{MT41}
	Let $A, B, C \in \widehat{\mathbb{F}_q^{\times}}$ be such that $A^2, B^2\neq\varepsilon$, $A^2\neq C$, and $B^2\neq C$. For $x\neq 1$, we have
	\begin{align}
		&{_{2}}F_1\left(\begin{array}{cccc}
			A^2, & B^2 \\
			& C
		\end{array}\mid x \right){_{2}}F_1\left(\begin{array}{cccc}
			A^2, & B^2 \\
			& A^2B^2\overline{C}\end{array}\mid x \right)\notag \\
		&= \frac{qAB(4)g(\overline{A^2})g(AB\overline{C})g(\overline{AB}C\varphi)}{g(\overline{B^2})g(B^2\overline{C})g(\overline{A^2}C)g(\varphi)} {_{4}}F_3\left(\begin{array}{cccccc}
			\hspace{-.15cm} A^2, \hspace{-.15cm} & B^2, \hspace{-.15cm}& AB, \hspace{-.15cm}& AB\varphi \\
			& A^2B^2, & C, & A^2B^2\overline{C} 
		\end{array}\mid 4x(1-x) \right)\notag\\
		&-\frac{(q-1)AB(4)g(\overline{A^2})g(AB\overline{C})g(\overline{AB}C\varphi)}{g(\overline{B^2})g(B^2\overline{C})g(\overline{A^2}C)g(\varphi)}\left[{_{3}}F_2\left(\begin{array}{cccccc}
			\hspace{-.15cm} A^2, \hspace{-.15cm} & B^2,  & \hspace{-.2cm}AB\varphi \\
			& A^2B^2, & \hspace{-.2cm} A^2B^2\overline{C} 
		\end{array}\hspace{-.1cm} \mid 4x(1-x) \right)\right.\notag\\
		&\left. \times \delta(AB\overline{C})+{{_3}}F_2\left(\begin{array}{cccccc}
			A^2, & B^2, & AB \\
			& A^2B^2, & C 
		\end{array}\mid 4x(1-x) \right)\delta(\overline{AB}C\varphi) \right]\notag\\
		&+\frac{q\overline{C}\overline{A^2}(1-x)C\overline{B^2}(x)}{g(A^2)g(\overline{B^2})g(B^2\overline{C})g(\overline{A^2}C)}\delta\left(\frac{1-2x}{(1-x)^2}\right)-\frac{(q-1)g(A\overline{B})g(\overline{A}B)\overline{AB}(x-x^2)}{q^2g(A^2)g(\overline{B^2})g(B^2\overline{C})g(\overline{A^2}C)}
		\notag\\
		&\times [(q-1)\delta(AB)\delta(AB\overline{C})- qAB(-1)\delta(AB\overline{C})-qABC(-1)\delta(AB)] \notag\\
		&- \frac{(q-1)g(\overline{A}B\varphi)g(A\overline{B}\varphi)\overline{AB}\varphi(x-x^2)}{q^2g(A^2)g(\overline{B^2})g(B^2\overline{C})g(\overline{A^2}C)}
		[(q-1)\delta(AB\overline{C}\varphi)\delta(AB\varphi)\notag\\
		&-qAB\varphi(-1)\delta(AB\overline{C}\varphi) -q ABC\varphi(-1)\delta(AB\varphi) ].\notag
	\end{align}
\end{theorem}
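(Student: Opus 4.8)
The argument $4x(1-x)$ of the ${}_4F_3$ series, together with the quadratic character $\varphi$ and the factor $AB(4)$, identifies this as a finite field analogue of the classical quadratic product formula
\[
{}_2F_1\!\left(\begin{matrix}a,b\\c\end{matrix};x\right){}_2F_1\!\left(\begin{matrix}a,b\\a+b+1-c\end{matrix};x\right)={}_4F_3\!\left(\begin{matrix}a,b,\tfrac{a+b}{2},\tfrac{a+b+1}{2}\\ a+b,c,a+b+1-c\end{matrix};4x(1-x)\right),
\]
in which the half-integer shifts $\tfrac{a+b}{2},\tfrac{a+b+1}{2}$ become the character pair $AB,\,AB\varphi$ and the parameter $a+b+1-c$ becomes $A^2B^2\overline{C}$. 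The plan is to realize the left-hand product as a finite field Appell-type double sum and then collapse it onto the diagonal, exactly as one does classically by writing the product as an $F_4$ Appell function: classically $F_4(\alpha,\beta;\gamma,\gamma';x(1-y),y(1-x))={}_2F_1(\alpha,\beta;\gamma;x)\,{}_2F_1(\alpha,\beta;\gamma';y)$ under the constraint $\gamma+\gamma'=\alpha+\beta+1$, and here the multiplicative constraint $C\cdot(A^2B^2\overline{C})=A^2B^2$ is automatically met.

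First I would expand each factor by Greene's definition \eqref{Greene-def-4}, obtaining
\[
\frac{q^2}{(q-1)^2}\sum_{\chi,\psi\in\widehat{\mathbb{F}_q^\times}}\binom{A^2\chi}{\chi}\binom{B^2\chi}{C\chi}\binom{A^2\psi}{\psi}\binom{B^2\psi}{A^2B^2\overline{C}\psi}\,\chi\psi(x).
\]
I would rewrite the binomial coefficients in terms of Gauss sums via \eqref{eq-0} and the Jacobi–Gauss relation, and then pass from $(\chi,\psi)$ to the diagonal variable $\lambda=\chi\psi$ together with one free summation index, so that the $x$-dependence collapses to $\lambda(x)$. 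This is the step where the double sum reorganizes into the finite field Appell $F_4$ series studied in our earlier work, and the inner summation over the free index is precisely the Appell reduction that manufactures the four upper and three lower parameters of the ${}_4F_3$. The square characters $A^2,B^2$ are split into the pairs $\{A,A\}$, $\{B,B\}$ and recombined into $\{AB,\,AB\varphi\}$ using the Hasse–Davenport product relation $g(\chi)g(\chi\varphi)=\chi(\overline{4})g(\chi^2)g(\varphi)$; this is what produces the factor $AB(4)$, the displayed Gauss-sum prefactor, and the argument $4x(1-x)$ (the $4^{k}$ from the relation combining with the evaluation $x\mapsto x(1-x)$ on the diagonal).

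The main obstacle is bookkeeping the degenerate contributions. Over $\mathbb{F}_q$ every binomial-coefficient and Gauss-sum identity invoked above carries correction terms supported on characters that are trivial or that satisfy one of the exceptional relations $AB=\varepsilon$, $C=AB$, $C=AB\varphi$, and so on; these are exactly the sources of the $\delta(AB\overline{C})$, $\delta(\overline{AB}C\varphi)$, $\delta(AB)$, $\delta(AB\varphi)$ and $\delta\!\big(\tfrac{1-2x}{(1-x)^2}\big)$ terms in the statement, the last of which reflects the exceptional value of $\lambda(x)$ when $\chi\psi$ ranges over a character forced to coincide with a fixed one. I would isolate each such exceptional character value, evaluate the resulting finite degenerate sum explicitly by elementary Gauss/Jacobi-sum manipulations, and collect the pieces into the two ${}_3F_2$ correction terms (which survive only when $AB\overline{C}$ or $\overline{AB}C\varphi$ is trivial) and the $\delta$ tails. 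The hypotheses $A^2,B^2\neq\varepsilon$, $A^2\neq C$ and $B^2\neq C$ guarantee that the denominators $g(\overline{A^2}),g(\overline{B^2}),g(B^2\overline{C}),g(\overline{A^2}C)$ are nonzero and that no additional degeneracy intrudes, while $x\neq1$ keeps the substitution well defined; the delicate part is certifying that each exceptional contribution is counted exactly once, so that combining the generic ${}_4F_3$ term with all correction terms reproduces the two sides precisely.
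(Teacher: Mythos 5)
Your proposal is correct and follows essentially the same route as the paper: the product is realized as the finite field Appell series $F_4(A^2;B^2;C,A^2B^2\overline{C};x(1-x),x(1-x))^{*}$ from the authors' earlier work (the paper's Lemma \ref{New-Lemma-1}), the double character sum is collapsed by the Gauss-sum convolution identity (Lemma \ref{g2}), the Hasse--Davenport relation for $m=2$ (Lemma \ref{g10}) splits $g(A^2B^2\chi^2)$ into the pair $AB\chi$, $AB\varphi\chi$ producing the factor $AB(4)$ and the argument $4x(1-x)$, and the $\delta$-corrections from Lemmas \ref{g3} and \ref{g8} assemble into exactly the ${_3}F_2$ and tail terms you describe. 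The only difference is organizational (you expand Greene's definition first and then recognize $F_4$, whereas the paper invokes its $F_4$ product theorem first and then expands), which does not change the substance of the argument.
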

We show that many interesting results proved by Evans, Greene, and Ono follow from the above transformation including a finite field analogue of the Clausen's classical identity. 
We have stated Theorem \ref{MT41} with minimum conditions on the parameters so that certain known results can be deduced, and therefore 
there are some extra terms in the formula. The extra terms will disappear if we put some additional conditions on the parameters. For example, we have the following corollary.
\begin{corollary}\label{revision-2}
	Let $A, B, C \in \widehat{\mathbb{F}_q^{\times}}$ be such that $A^2, B^2, A^2B^2, A^2B^2\overline{C^2}\neq\varepsilon$, $A^2\neq C$, and $B^2\neq C$. 
	For $x \neq 1,\frac{1}{2}$, we have
	\begin{align}
		&{_{2}}F_1\left(\begin{array}{cccc}
			A^2, & B^2 \\
			& C
		\end{array}\mid x \right){_{2}}F_1\left(\begin{array}{cccc}
			A^2, & B^2 \\
			& A^2B^2\overline{C}\end{array}\mid x \right)\notag \\
		&= \frac{qAB(4)g(\overline{A^2})g(AB\overline{C})g(\overline{AB}C\varphi)}{g(\overline{B^2})g(B^2\overline{C})g(\overline{A^2}C)g(\varphi)} {_{4}}F_3\left(\begin{array}{cccccc}
			\hspace{-.15cm}A^2, & \hspace{-.15cm}B^2, & \hspace{-.15cm}AB, & \hspace{-.15cm}AB\varphi \\
			& \hspace{-.15cm}A^2B^2, & \hspace{-.15cm} C, & \hspace{-.15cm} A^2B^2\overline{C} 
		\end{array}\mid 4x(1-x) \right).\notag
	\end{align}
\end{corollary}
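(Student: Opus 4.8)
The plan is to obtain Corollary \ref{revision-2} as a direct specialization of Theorem \ref{MT41}: under the additional hypotheses $A^2B^2\neq\varepsilon$ and $A^2B^2\overline{C^2}\neq\varepsilon$ together with the restriction $x\neq\frac{1}{2}$, I would show that every term on the right-hand side of Theorem \ref{MT41} apart from the leading ${_4}F_3$ term vanishes. No new identity is required, since the prefactor $\frac{qAB(4)g(\overline{A^2})g(AB\overline{C})g(\overline{AB}C\varphi)}{g(\overline{B^2})g(B^2\overline{C})g(\overline{A^2}C)g(\varphi)}$ of the ${_4}F_3$ term is literally the same in both statements; the entire task is to determine when the various $\delta$-factors can be nonzero.

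The key elementary observation is that, because characters are multiplicative, $A^2B^2=(AB)^2$ and $A^2B^2\overline{C^2}=(AB\overline{C})^2$. Hence a condition of the form $(XY)^2\neq\varepsilon$ is equivalent to $XY\notin\{\varepsilon,\varphi\}$, as $\varphi$ is the unique nontrivial character with $\varphi^2=\varepsilon$. Applying this, the hypothesis $A^2B^2\neq\varepsilon$ gives $AB\neq\varepsilon$ and $AB\neq\varphi$, while $A^2B^2\overline{C^2}\neq\varepsilon$ gives $AB\overline{C}\neq\varepsilon$ and $AB\overline{C}\neq\varphi$. I would then check the correction terms one at a time. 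The two ${_3}F_2$ terms carry the factors $\delta(AB\overline{C})$ and $\delta(\overline{AB}C\varphi)$; since $\overline{AB}C\varphi=\varepsilon$ is equivalent to $AB\overline{C}=\varphi$, both of these vanish. The first square-bracketed expression is built from $\delta(AB)$, $\delta(AB\overline{C})$ and their product, all of which vanish because $AB\neq\varepsilon$ and $AB\overline{C}\neq\varepsilon$; the second square-bracketed expression is built from $\delta(AB\varphi)$ and $\delta(AB\overline{C}\varphi)$, which vanish because $AB\neq\varphi$ and $AB\overline{C}\neq\varphi$. Finally, the purely geometric term carries $\delta\left(\frac{1-2x}{(1-x)^2}\right)$, which is nonzero only when $1-2x=0$, i.e. $x=\frac{1}{2}$; the restriction $x\neq\frac{1}{2}$, together with $x\neq1$ keeping the expression well defined, removes it. With every $\delta$-term eliminated, only the ${_4}F_3$ term survives, which is precisely the asserted identity.

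This argument presents no genuine obstacle beyond careful bookkeeping; the one point demanding attention is the translation between the squared-character hypotheses of the corollary and the unsquared character arguments appearing inside the $\delta$-functions of Theorem \ref{MT41}. Here one must remember that $(XY)^2=\varepsilon$ admits the extra solution $XY=\varphi$ alongside $XY=\varepsilon$. Capturing both solutions is exactly why the single hypothesis $A^2B^2\overline{C^2}\neq\varepsilon$ simultaneously clears $\delta(AB\overline{C})$, $\delta(AB\overline{C}\varphi)$ and $\delta(\overline{AB}C\varphi)$, and likewise $A^2B^2\neq\varepsilon$ clears both $\delta(AB)$ and $\delta(AB\varphi)$.
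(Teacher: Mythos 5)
Your proposal is correct and matches the paper's own (implicit) proof: the paper states Corollary \ref{revision-2} as an immediate specialization of Theorem \ref{MT41}, the point being exactly that the hypotheses $A^2B^2=(AB)^2\neq\varepsilon$ and $A^2B^2\overline{C^2}=(AB\overline{C})^2\neq\varepsilon$ force $AB, AB\varphi, AB\overline{C}, AB\overline{C}\varphi, \overline{AB}C\varphi$ all to be nontrivial, while $x\neq\frac{1}{2}$ kills $\delta\bigl(\frac{1-2x}{(1-x)^2}\bigr)$. Your careful note that $(XY)^2=\varepsilon$ has the two solutions $XY\in\{\varepsilon,\varphi\}$ is precisely the bookkeeping the paper relies on.
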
                                        
If we apply \eqref{prop-300} to Corollary \ref{revision-2}, we obtain the following identity satisfied by the McCarthy's finite field hypergeometric series.
\begin{align*}
&{_{2}}F_1\left(\begin{array}{cccc}
A^2, & B^2 \\
& C
\end{array}\mid x \right)^{\ast}{_{2}}F_1\left(\begin{array}{cccc}
A^2, & B^2 \\
& A^2B^2\overline{C}\end{array}\mid x \right)^{\ast}\\
&= {_{4}}F_3\left(\begin{array}{cccccc}
A^2, & B^2, & AB, & AB\varphi \\
& A^2B^2, & C, & A^2B^2\overline{C} 
\end{array}\mid 4x(1-x) \right)^{\ast}.
\end{align*}

The above identity is a finite field analogue of the following identity \cite[(6.1)]{bailey-3} satisfied by the classical hypergeometric series:
\begin{align}
	&{_{2}}F_1\left(\begin{array}{ccc}
		\alpha, & \beta\\
		& \gamma
	\end{array}\mid x\right){_{2}}F_1\left(\begin{array}{ccc}
		\alpha, & \beta\\
		& \alpha + \beta -\gamma
	\end{array}\mid x\right)\notag\\
	&={_{4}}F_3\left(\begin{array}{ccccc}
		\alpha, & \beta, & \frac{1}{2}(\alpha+\beta), & \frac{1}{2}(\alpha+\beta+1)\\
		& \alpha+\beta, & \gamma, & \alpha+\beta-\gamma+1
	\end{array}\mid 4x(1-x)\right).\notag
\end{align}

The following transformation satisfied by the classical hypergeometric series is equivalent to the Clausen's identity \cite{bailey-3}.
\begin{align}\label{identity-3f2-1}
&	{_{2}}F_1\left(\begin{array}{ccc}
		\alpha, & \beta\\
		& \alpha+\beta+\frac{1}{2}
	\end{array}\mid 4x(1-x)\right)^2\notag\\
	&={_{3}}F_2\left(\begin{array}{cccc}
		2\alpha, & 2\beta, & \alpha+\beta\\
		& 2\alpha+ 2\beta, & \alpha+\beta+\frac{1}{2}
	\end{array}\mid 4x(1-x)\right).
\end{align}
From Theorem \ref{MT41}, we prove the following result which is a finite field analogue of \eqref{identity-3f2-1}.
\begin{theorem}\label{MT41C2}
	Let $A, B  \in \widehat{\mathbb{F}_q^{\times}}$ be such that $A^2, B^2, A\overline{B}\varphi, AB, AB\varphi\neq\varepsilon$. 
	For $x \neq 1, \frac{1}{2}$, we have
	\begin{align}
		&{_{2}}F_1\left(\begin{array}{cccc}
			A, & B \\
			& AB\varphi
		\end{array}\mid 4x(1-x) \right)^2 = \frac{AB(4)g(B)^2g(A\varphi)^2}{qg(A^2)g(B^2)}\notag\\
		&\hspace{1cm} \times 
		{_{3}}F_2\left(\begin{array}{cccccc}
			A^2, & B^2, & AB \\
			& A^2B^2, & AB\varphi 
		\end{array}\mid 4x(1-x) \right)+\frac{g(B)^2g(A\varphi)^2\overline{AB}\varphi(x-x^2)}{q^2g(A^2)g(B^2)}.\notag
	\end{align}                        
\end{theorem}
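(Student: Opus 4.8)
The plan is to obtain Theorem \ref{MT41C2} from Theorem \ref{MT41} by specializing $C=AB\varphi$, the ``Clausen value'' $C^2=A^2B^2$ that forces the two ${_2}F_1$ factors on the left of Theorem \ref{MT41} to coincide. This choice violates the hypothesis $A^2B^2\overline{C^2}\neq\varepsilon$ of Corollary \ref{revision-2}, so the correction terms of Theorem \ref{MT41} survive and must be carried along; reducing them is exactly what yields the single extra summand in Theorem \ref{MT41C2}. Throughout I would use the elementary identities $AB\overline{C}=\varphi$, $\overline{AB}C\varphi=\varepsilon$, $A^2B^2\overline{C}=C=AB\varphi$, and $B^2\overline{C}=\overline{A^2}C=\overline{A}B\varphi$, all valid when $C=AB\varphi$.

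First I would read off that the left-hand side collapses to ${_2}F_1(A^2,B^2;AB\varphi\mid x)^2$, since both lower parameters equal $AB\varphi$. On the right, the parameter $AB\varphi$ occurs upstairs in the ${_4}F_3$ and also downstairs (as $C=A^2B^2\overline{C}=AB\varphi$), so one such pair cancels and the ${_4}F_3$ reduces to ${_3}F_2(A^2,B^2,AB;A^2B^2,AB\varphi\mid 4x(1-x))$, which is the series in Theorem \ref{MT41C2}.

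The key simplification is in the $\delta$-terms. Since $AB\overline{C}=\varphi\neq\varepsilon$, the first bracketed ${_3}F_2$ disappears, whereas $\overline{AB}C\varphi=\varepsilon$ gives $\delta(\overline{AB}C\varphi)=1$, so the second bracketed ${_3}F_2$ survives and is literally the reduced ${_4}F_3$ series. Its prefactor $-(q-1)K$ then combines with the prefactor $qK$ of the ${_4}F_3$ term to leave the single coefficient $K=-AB(4)g(\overline{A^2})/\big(g(\overline{B^2})g(\overline{A}B\varphi)^2\big)$, where I have used $g(\overline{AB}C\varphi)=g(\varepsilon)=-1$. The remaining corrections are handled by the hypotheses: the term proportional to $\delta\big(\tfrac{1-2x}{(1-x)^2}\big)$ vanishes because $x\neq\tfrac12$; the correction carrying $g(A\overline{B})g(\overline{A}B)$ vanishes because $\delta(AB)=\delta(AB\overline{C})=0$; and in the correction carrying $g(\overline{A}B\varphi)g(A\overline{B}\varphi)$ only the middle piece $-qAB\varphi(-1)\delta(AB\overline{C}\varphi)$ survives, since the other two carry $\delta(AB\varphi)=0$, leaving a single term proportional to $\overline{AB}\varphi(x-x^2)$. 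At this point I have an identity of the form ${_2}F_1(A^2,B^2;AB\varphi\mid x)^2=K\cdot{_3}F_2(\cdots\mid 4x(1-x))+(\text{correction})$.

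Finally I would pass from the argument $x$ to the argument $4x(1-x)$ required by Theorem \ref{MT41C2}. This is effected by the finite field quadratic transformation analogous to the classical ${_2}F_1(a,b;a+b+\tfrac12\mid 4z(1-z))={_2}F_1(2a,2b;a+b+\tfrac12\mid z)$, which with $a\leftrightarrow A$, $b\leftrightarrow B$ relates ${_2}F_1(A,B;AB\varphi\mid 4x(1-x))$ to ${_2}F_1(A^2,B^2;AB\varphi\mid x)$ (the lower parameter $AB\varphi$ being common to both) up to an explicit Gauss-sum factor and its own correction term. Squaring that relation and substituting the identity above produces the square on the left of Theorem \ref{MT41C2}. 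I expect this last step to be the main obstacle: one must pin down the exact correction term of the quadratic transformation and combine the cross-term arising from squaring with the surviving correction, so as to cancel the spurious factor $(q-1)$ and arrive at the single summand $g(B)^2g(A\varphi)^2\overline{AB}\varphi(x-x^2)/\big(q^2g(A^2)g(B^2)\big)$, while reducing all Gauss-sum prefactors to $AB(4)g(B)^2g(A\varphi)^2/\big(qg(A^2)g(B^2)\big)$ by means of $g(\chi)g(\overline{\chi})=\chi(-1)q$, $g(\varepsilon)=-1$, and the Hasse--Davenport relation $g(\chi)g(\chi\varphi)=\overline{\chi}(4)g(\varphi)g(\chi^2)$.
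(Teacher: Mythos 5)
Your route is the paper's own: set $C=AB\varphi$ in Theorem \ref{MT41} (this specialization is exactly the paper's Corollary \ref{MT41C1}) and then trade the argument $x$ for $4x(1-x)$ via Greene's quadratic transformation \cite[(4.33)]{greene-thesis}, which is the identity \eqref{mt41c2eq1} used in the paper. The genuine gap is your treatment of the ${_4}F_3$. In Greene's framework a character occurring both as an upper and a lower parameter does \emph{not} cancel: by \eqref{b5} the offending binomial is ${AB\varphi\chi\choose AB\varphi\chi}=-\frac{1}{q}+\frac{q-1}{q}\delta(AB\varphi\chi)$, so instead of your claim ``the ${_4}F_3$ reduces to the ${_3}F_2$'' one has
\begin{align}
&{_{4}}F_3\left(\begin{array}{cccc}
A^2, & B^2, & AB, & AB\varphi\\
 & A^2B^2, & AB\varphi, & AB\varphi
\end{array}\mid 4x(1-x)\right)
=-\frac{1}{q}\,{_{3}}F_2\left(\begin{array}{ccc}
A^2, & B^2, & AB\\
 & A^2B^2, & AB\varphi
\end{array}\mid 4x(1-x)\right)\notag\\
&\hspace{2cm}+{A\overline{B}\varphi\choose \overline{AB}\varphi}{\overline{A}B\varphi\choose AB\varphi}{\varphi\choose \varepsilon}\,\overline{AB}\varphi(4x-4x^2),\notag
\end{align}
the extra term coming from the single character $\chi=\overline{AB}\varphi$ in Greene's sum. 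Consequently your bookkeeping $qK-(q-1)K=K$ is wrong on two counts: the correct combination is $qK\cdot\left(-\frac{1}{q}\right)-(q-1)K=-qK$, which is the coefficient $\frac{qAB(4)g(\overline{A^2})}{g(\overline{B^2})g(\overline{A}B\varphi)^2}$ appearing in Corollary \ref{MT41C1} (your $K$ is off by the factor $-q$, and would propagate to a final answer equal to $-\frac{1}{q}$ times the stated one); and you have dropped the displayed extra term, which, after multiplying by the prefactor $qK$ and applying Lemma \ref{g1}, equals $\frac{AB\varphi(-1)g(A\overline{B}\varphi)}{qg(A^2)g(\overline{B^2})g(\overline{A}B\varphi)}\overline{AB}\varphi(x-x^2)$.

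That dropped term is not incidental: it is precisely what removes the ``spurious factor $(q-1)$'' you worry about at the end. Added to the correction you did keep, namely $\frac{(q-1)AB\varphi(-1)g(A\overline{B}\varphi)}{qg(A^2)g(\overline{B^2})g(\overline{A}B\varphi)}\overline{AB}\varphi(x-x^2)$, it produces the factor $1+(q-1)=q$, and squaring the quadratic transformation then turns the sum into the single summand $\frac{g(B)^2g(A\varphi)^2\overline{AB}\varphi(x-x^2)}{q^2g(A^2)g(B^2)}$ of Theorem \ref{MT41C2}. Your fallback mechanism---that the quadratic transformation contributes ``its own correction term'' which cancels the $(q-1)$---cannot work, because \eqref{mt41c2eq1} carries no $\delta$-term for these parameters; without the repeated-parameter correction above, the identity you would arrive at is simply false.
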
  
We note that a finite field analogue of the Clausen's identity was also obtained by Evans and Greene \cite[Thm 1.5]{EG}. Theorem \ref{MT41C2} can also be deduced from \cite[Thm 1.5]{EG} by taking $S=B, C=AB\varphi$, and then employing Lemma \ref{gj1} and Lemma \ref{g1}.
\par The following identity expresses a ${_4}F_3$ classical hypergeometric series as a product of two ${_2}F_1$ classical hypergeometric series \cite[(7.4)]{bailey-3}.
\begin{align}\label{identity-4f3-1}
	&{_{2}}F_1\left(\begin{array}{ccc}
		\alpha, & \beta\\
		& \gamma
	\end{array}\mid x\right){_{2}}F_1\left(\begin{array}{ccc}
		\gamma-\beta, & 1-\beta\\
		& \alpha-\beta-1
	\end{array}\mid x\right)\notag\\
	&=(1-x)^{\beta-\alpha-\gamma}{_{4}}F_3\left(\begin{array}{ccccc}
		\hspace{-.15cm} \alpha, &\hspace{-.2cm} \gamma- \beta, & \hspace{-.15cm} \frac{1}{2}(\alpha+\gamma-\beta), & \hspace{-.2cm} \frac{1}{2}(\alpha+\gamma-\beta+1)\\
		& \hspace{-.2cm} \alpha+\gamma-\beta, & \hspace{-.2cm} \gamma, & \hspace{-.15cm} \alpha-\beta+1
	\end{array}\hspace{-.15cm} \mid \frac{-4x}{(1-x)^2}\right).
\end{align}
In the following theorem, we prove a finite field analogue of \eqref{identity-4f3-1}.
\begin{theorem}\label{MT42}
	Let $A, D, E\in\widehat{\mathbb{F}_q^{\times}}$ be such that $A^2, E^2, A^2\overline{D^2E^2}, A^2D^2\overline{E^2}\neq\varepsilon$, $A^2\neq D^2$, and $D^2\neq E^2$. 
	For $z\neq 1$, we have
	\begin{align}
		&{_{2}}F_1\left(\begin{array}{cccc}
			A^2, & E^2 \\
			& D^2
		\end{array}\mid z \right){_{2}}F_1\left(\begin{array}{cccc}
			D^2\overline{E^2}, & \overline{E^2} \\
			& A^2\overline{E^2}\end{array}\mid z \right)\notag\\
		&= \frac{E^2(z)}{q}\delta\left(1-z^2\right)+ \frac{AD\overline{E}(4)\overline{A^2D^2}E^2(1-z)g(A\overline{ED})g(\overline{A}ED\varphi)}{g(\varphi)}\notag\\
		&\hspace{3cm}\times {_{4}}F_3\left(\begin{array}{cccccc}
			A^2, & D^2\overline{E^2}, & AD\overline{E}, & AD\overline{E}\varphi \\
			& A^2D^2\overline{E^2}, & D^2, & A^2\overline{E^2} 
		\end{array}\mid \frac{-4z}{(1-z)^2} \right).\notag                                             
	\end{align}
\end{theorem}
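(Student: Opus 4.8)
My plan is to obtain Theorem \ref{MT42} from Corollary \ref{revision-2} by the finite field analogue of the classical passage from Bailey's product formula (6.1) to \eqref{identity-4f3-1}. Classically \eqref{identity-4f3-1} results from (6.1) under the Pfaff--Kummer substitution $x\mapsto\frac{x}{x-1}$, which converts $4x(1-x)$ into $\frac{-4x}{(1-x)^2}$ and, after relabelling the free parameter, matches the two ${_2}F_1$ factors; I would reproduce this over $\mathbb{F}_q$.

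First I would specialise Corollary \ref{revision-2} by $B=D\overline{E}$ and $C=D^2$. One checks directly that its hypotheses $A^2,B^2,A^2B^2,A^2B^2\overline{C^2}\neq\varepsilon$, $A^2\neq C$, $B^2\neq C$ turn into exactly $A^2,E^2,A^2\overline{D^2E^2},A^2D^2\overline{E^2}\neq\varepsilon$, $A^2\neq D^2$, $D^2\neq E^2$, the hypotheses of Theorem \ref{MT42}, and that $AB=AD\overline{E}$, $AB\varphi=AD\overline{E}\varphi$, $A^2B^2=A^2D^2\overline{E^2}$, $A^2B^2\overline{C}=A^2\overline{E^2}$, so the ${_4}F_3$ of Corollary \ref{revision-2} becomes verbatim the ${_4}F_3$ of Theorem \ref{MT42}. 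I would then set $x=\frac{z}{z-1}$, legitimate for $z\neq 1$, which yields $\frac{x}{x-1}=z$, $1-x=\frac{1}{1-z}$, and $4x(1-x)=\frac{-4z}{(1-z)^2}$, the argument of the target ${_4}F_3$.

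Next I would apply Greene's Pfaff--Kummer transformation for ${_2}F_1$ \cite{greene} to each factor of the specialised Corollary \ref{revision-2}, bringing ${_2}F_1(A^2,B^2;C;\frac{z}{z-1})$ back to ${_2}F_1(A^2,E^2;D^2;z)$ and ${_2}F_1(A^2,B^2;A^2B^2\overline{C};\frac{z}{z-1})$ back to ${_2}F_1(D^2\overline{E^2},\overline{E^2};A^2\overline{E^2};z)$, so that the left side of Theorem \ref{MT42} is recovered up to an explicit factor of the shape $\overline{A^2D^2}E^2(1-z)$. Substituting everything into the specialised Corollary \ref{revision-2} then expresses the left side of Theorem \ref{MT42} as $\overline{A^2D^2}E^2(1-z)$ times the prefactor of Corollary \ref{revision-2} times the target ${_4}F_3$. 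The step I expect to be the main obstacle is reconciling this prefactor: one is left with a ratio of Gauss sums --- together with the $\chi(-1)$-factors and boundary contributions produced by the transformation --- that has to be collapsed to the compact form $\frac{AD\overline{E}(4)\,\overline{A^2D^2}E^2(1-z)\,g(A\overline{ED})g(\overline{A}ED\varphi)}{g(\varphi)}$. I would carry this out using the reflection identity $g(\chi)g(\overline{\chi})=\chi(-1)q$, the conversion $J(\alpha,\beta)=\frac{g(\alpha)g(\beta)}{g(\alpha\beta)}$, and the Hasse--Davenport relation (Lemma \ref{g1}) to manufacture the square-root characters $AD\overline{E}$ and $AD\overline{E}\varphi$ out of the squares $A^2,D^2,E^2$; tracking these Gauss- and Jacobi-sum factors correctly is the genuinely delicate part.

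Finally, Corollary \ref{revision-2} is restricted to $x\neq\frac12$, which under $x=\frac{z}{z-1}$ is $z\neq -1$; thus the reduction settles the identity for all $z\neq 1,-1$, where $\delta(1-z^2)=0$ and only the ${_4}F_3$ term is present. The single exceptional value $z=-1$ (the only place with $\delta(1-z^2)=1$, as $z\neq 1$) I would handle directly, evaluating both ${_2}F_1$ factors and the ${_4}F_3$ at $z=-1$; this is where the residual term $\frac{E^2(z)}{q}\delta(1-z^2)$ is produced, from the boundary contribution of the transformation at the excluded point. Should the prefactor and boundary bookkeeping prove too delicate, the fallback is to imitate the direct proof of Theorem \ref{MT41}: expand the product of the two Greene ${_2}F_1$ series as a double character sum, pass to Gauss and Jacobi sums, and collapse the inner sum to the single ${_4}F_3$ through our earlier reduction formulas for finite field Appell series, which yields the same prefactor and $\delta$-term.
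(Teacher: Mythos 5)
Your reduction for $z\neq\pm 1$ is, in substance, the paper's own proof: the paper likewise specializes the product formula at $B=D\overline{E}$, $C=D^2$, applies Greene's transformation Theorem \ref{thm7}~(ii) to each ${_2}F_1$ factor, and substitutes $z=\frac{x}{x-1}$. The genuine gap is your treatment of $z=-1$, i.e.\ of the term $\frac{E^2(z)}{q}\delta(1-z^2)$. That term is \emph{not} a ``boundary contribution of the transformation at the excluded point'': the boundary terms in Theorem \ref{thm7} are supported on $\delta(1-x)$ and $\delta(x)$, both of which vanish identically for $x=\frac{z}{z-1}$ with $z\neq 0,1$. It descends instead from the term $\frac{q\overline{C}\overline{A^2}(1-x)C\overline{B^2}(x)}{g(A^2)g(\overline{B^2})g(B^2\overline{C})g(\overline{A^2}C)}\,\delta\left(\frac{1-2x}{(1-x)^2}\right)$ present in Theorem \ref{MT41} (whose origin is the Appell-series reduction, Lemma \ref{New-Lemma-1}); since $\frac{1-2x}{(x-1)^2}=1-z^2$ under $z=\frac{x}{x-1}$, the point $x=\frac12$ is exactly $z=-1$. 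By starting from Corollary \ref{revision-2}, which assumes $x\neq\frac12$, you have discarded precisely this term, and your plan to recover it by ``direct evaluation'' at $z=-1$ is not workable: there the two ${_2}F_1$'s have generic parameters (the bottom parameter $D^2$ is not $\overline{A^2}E^2$, so Lemma \ref{g16} does not apply), and the ${_4}F_3$ sits at argument $1$, for which no general evaluation is available. The repair is immediate and is what the paper does: run the same argument from Theorem \ref{MT41} itself; under the hypotheses of Theorem \ref{MT42} every extra term of Theorem \ref{MT41} vanishes except the $\delta\left(\frac{1-2x}{(1-x)^2}\right)$ one, and that term transforms into exactly $\frac{E^2(z)}{q}\delta(1-z^2)$ (using $E^2(-1)=1$).

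A second point you gloss over: Greene's series is not symmetric in its upper parameters, so Theorem \ref{thm7}~(ii), which keeps the first upper parameter fixed, cannot by itself carry ${_2}F_1\left(A^2,D^2\overline{E^2};A^2\overline{E^2}\mid\frac{z}{z-1}\right)$ to ${_2}F_1\left(D^2\overline{E^2},\overline{E^2};A^2\overline{E^2}\mid z\right)$. One must first interchange the two upper parameters, which by \eqref{prop-300} (equivalently Lemma \ref{g8}) costs the factor ${D^2\overline{E^2}\choose A^2\overline{E^2}}{A^2\choose A^2\overline{E^2}}^{-1}=\frac{g(D^2\overline{E^2})g(E^2)}{g(A^2)g(\overline{A^2}D^2)}$. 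This factor is an essential ingredient of the final prefactor $\frac{AD\overline{E}(4)\,\overline{A^2D^2}E^2(1-z)\,g(A\overline{ED})g(\overline{A}ED\varphi)}{g(\varphi)}$, so the ``Gauss-sum bookkeeping'' you defer includes this swap and not only the reflection and Davenport--Hasse identities you list. With these two corrections your outline coincides with the paper's proof.
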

If we assume $z^2\neq 1$ in Theorem \ref{MT42}, then \eqref{prop-300} yields
\begin{align}
&{_{2}}F_1\left(\begin{array}{cccc}
A^2, & E^2 \\
& D^2
\end{array}\mid z \right)^{\ast}{_{2}}F_1\left(\begin{array}{cccc}
D^2\overline{E^2}, & \overline{E^2} \\
& A^2\overline{E^2}\end{array}\mid z \right)^{\ast}\notag\\
&= \overline{A^2D^2}E^2(1-z){_{4}}F_3\left(\begin{array}{cccccc}
A^2, & D^2\overline{E^2}, & AD\overline{E}, & AD\overline{E}\varphi \\
& A^2D^2\overline{E^2}, & D^2, & A^2\overline{E^2} 
\end{array}\mid \frac{-4z}{(1-z)^2} \right)^{\ast},\notag                                             
\end{align}
which is an exact finite field analogue of \eqref{identity-4f3-1}.
\par The following is another product formula satisfied by the classical hypergeometric series \cite[(6.3)]{bailey-3}.
\begin{align}\label{identity-4f3-2}
	&{_{2}}F_1\left(\begin{array}{ccc}
		\alpha, & \beta\\
		& \gamma
	\end{array}\mid x\right){_{2}}F_1\left(\begin{array}{ccc}
		\alpha, & \gamma-\beta\\
		& \gamma
	\end{array}\mid x\right)\notag\\
	&=(1-x)^{-\alpha}{_{4}}F_3\left(\begin{array}{ccccc}
		\alpha, & \beta, & \gamma-\alpha, & \gamma-\beta\\
		& \gamma, & \frac{1}{2}\gamma, & \frac{\gamma+1}{2}
	\end{array}\mid \frac{-x^2}{4(1-x)}\right).
\end{align}
We prove the following result which is a finite field analogue of \eqref{identity-4f3-2}.
\begin{theorem}\label{MT43}
	Let $A, B, C\in\widehat{\mathbb{F}_q^{\times}}$ be such that $A, B, C^2\neq\varepsilon$ and $A, B\neq C^2$. For $x \neq 1$, we have
	\begin{align}
		&{_{2}}F_1\left(\begin{array}{cccc}
			\hspace{-.15cm} A, & \hspace{-.15cm} B \\
			&\hspace{-.15cm} C^2
		\end{array}\mid x \right){_{2}}F_1\left(\begin{array}{cccc}
			\hspace{-.15cm} A, & \hspace{-.15cm} C^2\overline{B} \\
			& \hspace{-.15cm} C^2
		\end{array}\mid x \right)
		= \frac{qAB(-1)\overline{A^2}B(1-x)\overline{C^2}(x)}{g(A)g(\overline{B})g(\overline{A}C^2)g(B\overline{C^2})}\delta\left(\frac{x-2}{x-1}\right)\notag\\
		&+\frac{q\overline{A}(1-x)g(\overline{A}C)g(\overline{B}C\varphi)}
		{\varphi(-1)C(4)g(\varphi)g(\overline{A}C^2)g(\overline{B})}{_{4}}F_3\left(\begin{array}{cccccc}
			A, & B, & \overline{A}C^2, & \overline{B}C^2 \\
			& C^2, & C, & C\varphi 
		\end{array}\mid \frac{-x^2}{4(1-x)}\right)\notag\\
		&+\frac{(q-1)\overline{A}(1-x)g(\overline{A}C)g(\overline{B}C\varphi)}{\varphi(-1)C(4)g(\varphi)g(\overline{A}C^2)g(\overline{B})}\left[\frac{q-1}{q} {_{2}}F_1\left(\begin{array}{ccccccc}
			\hspace{-.2cm} A, & \hspace{-.2cm} B \\
			& \hspace{-.2cm}C^2
		\end{array}\mid \frac{-x^2}{4(1-x)} \right)\delta(\overline{A}C)\delta(\overline{B}C\varphi)\right.\notag\\
		&\left.-{_{3}}F_2\left(\begin{array}{ccccccc}
			\hspace{-.2cm} A, & \hspace{-.2cm} B, &\hspace{-.2cm}  \overline{B}C^2 \\
			& \hspace{-.2cm} C^2, & \hspace{-.2cm} C\varphi\end{array} \hspace{-.2cm} \mid \frac{-x^2}{4(1-x)} \right)\delta(\overline{A}C) 
		- {_{3}}F_2\left(\begin{array}{ccccccc}
			\hspace{-.2cm} A, & \hspace{-.2cm} B, & \hspace{-.2cm} \overline{A}C^2 \\
			& \hspace{-.2cm} C^2, & \hspace{-.2cm} C
		\end{array} \hspace{-.2cm}\mid \frac{-x^2}{4(1-x)} \right)\delta(\overline{B}C\varphi)\right]\notag\\
		&-\frac{(q-1)\overline{A}(1-x)\overline{C}(x^2)C(1-x)}{q g(A)g(\overline{B})g(B\overline{C^2})g(\overline{A}C^2)}[(q-1)\delta(A\overline{C}) \delta(B\overline{C})-qBC(-1)\delta(A\overline{C})\notag\\
		&-qAC(-1)\delta(B\overline{C})+(q-1)\varphi(1-x)\delta(A\overline{C}\varphi)\delta(B\overline{C}\varphi)-qBC(-1)\varphi(x-1)\delta(A\overline{C}\varphi)\notag\\
		&-qAC(-1)\varphi(x-1)\delta(B\overline{C}\varphi)].\notag                                            
	\end{align}
\end{theorem}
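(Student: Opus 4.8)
The plan is to prove Theorem~\ref{MT43} by expressing the product of the two ${_2}F_1$-series as a double character sum over $\widehat{\mathbb{F}_q^\times}$, and then to recognize the quadratic argument $\frac{-x^2}{4(1-x)}$ as arising from a finite field analogue of a quadratic substitution. First I would write each Greene ${_2}F_1$ via the definition~\eqref{Greene-def-4} as a sum over characters $\chi$ and $\psi$ respectively, so that the product becomes
\begin{align*}
\frac{q^2}{(q-1)^2}\sum_{\chi,\psi}{A\chi\choose\chi}{B\chi\choose C^2\chi}{A\psi\choose\psi}{C^2\overline{B}\psi\choose C^2\psi}\chi(x)\psi(x).
\end{align*}
The presence of the common numerator character $A$ in both series, together with the shared lower parameter $C^2$, is precisely the structural feature that signals a connection to a finite field Appell-type series, and my earlier works on finite field Appell series (cited in the paper) should provide the bridge. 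The key idea is that a product of two hypergeometric series in the same variable $x$ can be realized as an Appell $F$-series evaluated along a diagonal, and Appell series admit reduction formulas to single ${_{n+1}}F_n$-series under suitable parameter coincidences.

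The central computational step is to convert the binomial-coefficient products into Jacobi/Gauss sums using~\eqref{eq-0}, and then to apply the Hasse--Davenport product relation or a multiplication formula for Gauss sums to account for the factor $C(4)=C(2)^2$ and the appearance of $\varphi$ among the lower parameters $C,C\varphi$. Concretely, the lower parameters $C^2,\tfrac12\gamma\leftrightarrow C,\tfrac{\gamma+1}{2}\leftrightarrow C\varphi$ in~\eqref{identity-4f3-2} reflect the duplication $\gamma\mapsto C^2$, so I expect a Gauss-sum duplication identity (of the form $g(\psi)g(\psi\varphi)=\psi(4)^{-1}g(\varphi)g(\psi^2)$, available via Lemma~\ref{gj1} and Lemma~\ref{g1} referenced in the text) to be exactly what collapses the two summation variables into one and produces the ${_4}F_3$ with argument $\frac{-x^2}{4(1-x)}$. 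The prefactor $\frac{q\overline{A}(1-x)g(\overline{A}C)g(\overline{B}C\varphi)}{\varphi(-1)C(4)g(\varphi)g(\overline{A}C^2)g(\overline{B})}$ should emerge as the accumulated normalization from these Gauss-sum manipulations, the $\overline{A}(1-x)$ factor being the finite field analogue of the classical $(1-x)^{-\alpha}$.

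The genuinely hard part, and the reason the statement is so long, is the careful bookkeeping of the degenerate contributions. Each binomial coefficient ${A\choose B}$ in~\eqref{eq-0} carries an implicit correction when $A=B$ or when one of the characters is trivial, because the Jacobi-sum/Gauss-sum translations break down precisely at those points; these exceptional characters are exactly what produce the $\delta$-terms involving $\delta(\overline{A}C)$, $\delta(\overline{B}C\varphi)$, $\delta(A\overline{C})$, $\delta(B\overline{C})$, and their $\varphi$-twisted variants. I would handle this by first establishing the identity on the ``generic'' part of the character sum where all the Gauss-sum reductions are valid, and then separately isolating the characters $\chi,\psi$ for which $A\chi=\chi$, $B\chi=C^2\chi$, etc.\ fail, computing each boundary contribution explicitly and matching it against the stated $\delta$-terms. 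The term $\delta\!\left(\frac{x-2}{x-1}\right)$ is the analogue of the classical singular locus of the quadratic transformation $x\mapsto\frac{-x^2}{4(1-x)}$ (where $1-z^2=0$ in the companion Theorem~\ref{MT42}), and tracking where the substitution degenerates over $\mathbb{F}_q$ is the most delicate point; I anticipate that reconciling the sign conventions and the factors of $(q-1)/q$ across all these degenerate pieces, rather than the main-term reduction itself, will be the principal obstacle.
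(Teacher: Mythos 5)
There is a genuine gap at the very first step. Your plan is to realize the product as an Appell series ``evaluated along a diagonal,'' but the two series in Theorem~\ref{MT43} have \emph{different} upper parameters ($A,B$ versus $A,C^2\overline{B}$) and the \emph{same} lower parameter $C^2$, whereas the finite field Appell product formula (Lemma~\ref{New-Lemma-1}, coming from Theorem~\ref{New-Lemma}) requires identical upper parameters $A,B$ and the complementary pair of lower parameters $C$ and $AB\overline{C}$. So the product, written as your double sum over $\chi,\psi$, does not match the $F_4$ framework, and no diagonal specialization of $F_4$ produces it. The step you are missing is the paper's opening move: apply Greene's transformations (Theorem~\ref{thm7}~(ii) followed by~(i)) to the second factor to get, for $x\neq 0,1$ (the case $x=0$ being trivial),
\begin{align*}
{_{2}}F_1\left(\begin{array}{cc} A, & C^2\overline{B} \\ & C^2 \end{array}\mid x \right)
= A(-1)\overline{A}(1-x)\,{_{2}}F_1\left(\begin{array}{cc} A, & B \\ & AB\overline{C^2} \end{array}\mid \frac{1}{1-x} \right),
\end{align*}
which aligns the parameters and is where the prefactor $\overline{A}(1-x)$ actually comes from. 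Now Lemma~\ref{New-Lemma-1} applies with $z=x$, $w=\frac{1}{1-x}$, and the Appell series is evaluated at $\left(\frac{x^2}{x-1},\,1\right)$ --- not at a diagonal point. The fact that the second argument equals $1$ is the mechanism that collapses the double character sum: the $\lambda$-sum in \eqref{f4-star} becomes free of the argument and is evaluated in closed form by Lemma~\ref{g2}. You attribute this collapse to a Gauss-sum duplication identity, which is a misdiagnosis: duplication (Lemma~\ref{g10}) enters only afterwards, applied to $g(\overline{C^2\chi^2})$ inside the resulting single sum, and that is what produces the lower parameters $C$, $C\varphi$, the factor $C(4)$, and the rescaling of the argument from $\frac{x^2}{x-1}$ to $\frac{-x^2}{4(1-x)}$.

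Your account of the degenerate terms is likewise off in origin, though right in spirit: the term $\delta\left(\frac{x-2}{x-1}\right)$ is not the ``singular locus of the quadratic substitution'' but the delta term $\delta\left(\frac{1-w-z}{(1-z)(1-w)}\right)$ of Lemma~\ref{New-Lemma-1}, which with $z=x$, $w=\frac{1}{1-x}$ is supported exactly at $x=2$. The remaining delta terms come from three identifiable sources in the single-sum computation: the $\delta(C^2\chi^2)$ correction when Lemma~\ref{g2} is applied, the $\delta(\overline{C^2\chi^2})$ correction when Lemma~\ref{g3} is used to move $g(C^2\chi^2)$ out of the denominator (both evaluated at $\chi=\overline{C},\overline{C}\varphi$), and the $\frac{q-1}{q}\delta(\overline{A}C)$, $\frac{q-1}{q}\delta(\overline{B}C\varphi)$ corrections from Lemma~\ref{g8} when Gauss sums are converted back into binomial coefficients to recognize the ${_4}F_3$. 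Without the parameter-aligning transformation and the second-argument-equals-one mechanism, none of this bookkeeping can begin, so the proposal as it stands does not yield a proof.
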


If we put some additional conditions on the parameters in Theorem \ref{MT43}, we readily obtain the following identity.
\begin{corollary}\label{MT43C}
	Let $A, B, C\in\widehat{\mathbb{F}_q^{\times}}$ be such that $A, B, C^2, A^2\overline{C^2}, B^2\overline{C^2}\neq\varepsilon$ and 
	$A, B\neq C^2$. For $x\neq 1$, we have
		\begin{align}
		&{_{2}}F_1\left(\begin{array}{cccc}
		\hspace{-.1cm}	A, & \hspace{-.2cm} B \\
			& \hspace{-.2cm} C^2
		\end{array}\mid x \right){_{2}}F_1\left(\begin{array}{cccc}
			\hspace{-.1cm} A, & \hspace{-.2cm} C^2\overline{B} \\
			& \hspace{-.2cm} C^2
		\end{array}\mid x \right)
		= \frac{qAB(-1)\overline{A^2}B(1-x)\overline{C^2}(x)}{g(A)g(\overline{B})g(\overline{A}C^2)g(B\overline{C^2})}\delta\left(\frac{x-2}{x-1}\right)\notag\\
		&\hspace{.4cm}+\frac{q\varphi(-1)\overline{C}(4)\overline{A}(1-x)g(\overline{A}C)g(\overline{B}C\varphi)}{g(\varphi)g(\overline{A}C^2)g(\overline{B})}{_{4}}F_3\left(\begin{array}{cccccc}
			\hspace{-.1cm} A, & \hspace{-.2cm} B, & \hspace{-.2cm} \overline{A}C^2, &\hspace{-.2cm} \overline{B}C^2 \\
			& \hspace{-.2cm} C^2, & \hspace{-.2cm} C, & \hspace{-.2cm} C\varphi 
		\end{array}\mid \frac{-x^2}{4(1-x)}\right).\notag
	\end{align}
\end{corollary}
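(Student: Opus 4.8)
The plan is to derive Corollary \ref{MT43C} directly from Theorem \ref{MT43} by showing that the two additional hypotheses $A^2\overline{C^2}\neq\varepsilon$ and $B^2\overline{C^2}\neq\varepsilon$ force every $\delta$-symbol occurring in the bracketed expressions of Theorem \ref{MT43} to vanish. The first step is to unpack these conditions at the level of characters. Writing $A^2\overline{C^2}=(A\overline{C})^2$ and $B^2\overline{C^2}=(B\overline{C})^2$, the hypothesis $A^2\overline{C^2}\neq\varepsilon$ is equivalent to $A\overline{C}$ being neither trivial nor quadratic, i.e. $A\neq C$ and $A\neq C\varphi$; likewise $B^2\overline{C^2}\neq\varepsilon$ is equivalent to $B\neq C$ and $B\neq C\varphi$.

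Next I would inspect the $\delta$-symbols appearing in Theorem \ref{MT43} one at a time. The block multiplied by $\frac{(q-1)\overline{A}(1-x)g(\overline{A}C)g(\overline{B}C\varphi)}{\varphi(-1)C(4)g(\varphi)g(\overline{A}C^2)g(\overline{B})}$ contains only the factors $\delta(\overline{A}C)$ and $\delta(\overline{B}C\varphi)$; since $\delta(\overline{A}C)=1$ would force $A=C$ and $\delta(\overline{B}C\varphi)=1$ would force $B=C\varphi$, both are excluded by the new hypotheses, so this entire block is zero. The final block, multiplied by $\frac{(q-1)\overline{A}(1-x)\overline{C}(x^2)C(1-x)}{q\,g(A)g(\overline{B})g(B\overline{C^2})g(\overline{A}C^2)}$, contains $\delta(A\overline{C})$, $\delta(B\overline{C})$, $\delta(A\overline{C}\varphi)$, and $\delta(B\overline{C}\varphi)$; these vanish because $A\neq C$, $B\neq C$, $A\neq C\varphi$, and $B\neq C\varphi$, respectively. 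Hence both bracketed blocks disappear, leaving only the leading $\delta\!\left(\frac{x-2}{x-1}\right)$ term and the ${_4}F_3$ term, both of which carry over unchanged (the leading term is literally identical in the two statements).

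The last step is a cosmetic rewrite of the coefficient of the ${_4}F_3$ term. Since $\varphi(-1)^2=1$ and $\overline{C}(4)=C(4)^{-1}$, we have $\frac{1}{\varphi(-1)C(4)}=\varphi(-1)\overline{C}(4)$, which converts the coefficient $\frac{q\overline{A}(1-x)g(\overline{A}C)g(\overline{B}C\varphi)}{\varphi(-1)C(4)g(\varphi)g(\overline{A}C^2)g(\overline{B})}$ of Theorem \ref{MT43} into the form $\frac{q\varphi(-1)\overline{C}(4)\overline{A}(1-x)g(\overline{A}C)g(\overline{B}C\varphi)}{g(\varphi)g(\overline{A}C^2)g(\overline{B})}$ stated in the corollary, giving exactly the claimed identity. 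There is no genuine obstacle here; the only point requiring care is reading off correctly that $A^2\overline{C^2}=\varepsilon$ encodes \emph{both} $A=C$ and $A=C\varphi$ (not merely $A=C$), so that it simultaneously annihilates the untwisted and the $\varphi$-twisted families of $\delta$-symbols. Overlooking the quadratic-character case would wrongly leave the terms carrying $\delta(A\overline{C}\varphi)$ and $\delta(B\overline{C}\varphi)$ in place.
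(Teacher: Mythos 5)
Your proposal is correct and follows exactly the route the paper intends: Corollary \ref{MT43C} is obtained from Theorem \ref{MT43} by observing that $A^2\overline{C^2}\neq\varepsilon$ and $B^2\overline{C^2}\neq\varepsilon$ exclude $A\in\{C,C\varphi\}$ and $B\in\{C,C\varphi\}$, so every $\delta$-factor in the two bracketed blocks vanishes, and the remaining coefficient is rewritten via $\frac{1}{\varphi(-1)C(4)}=\varphi(-1)\overline{C}(4)$. Your closing observation that the squared condition kills both the untwisted and the $\varphi$-twisted $\delta$-symbols is precisely the point that makes the reduction ``readily'' follow, as the paper asserts.
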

If we assume $x\neq 2$ in Corollary \ref{MT43C}, then \eqref{prop-300} yields
\begin{align}
&{_{2}}F_1\left(\begin{array}{cccc}
A, & B \\
& C^2
\end{array}\mid x \right)^{\ast}{_{2}}F_1\left(\begin{array}{cccc}
A, & C^2\overline{B} \\
& C^2
\end{array}\mid x \right)^{\ast}
\notag\\
&= \overline{A}(1-x){_{4}}F_3\left(\begin{array}{cccccc}
A, & B, & \overline{A}C^2, & \overline{B}C^2 \\
& C^2, & C, & C\varphi 
\end{array}\mid \frac{-x^2}{4(1-x)}\right)^{\ast},\notag
\end{align}
which is an exact finite field analogue of \eqref{identity-4f3-2}.
\subsection{Special values of Gaussian hypergeometric series} Finding special values of Gaussian hypergeometric series is an important and
interesting problem. Special values of Gaussian hypergeometric series play an important role in solving
many old conjectures and supercongruences. Many special values of ${_{2}}F_1$- and ${_{3}}F_2$-Gaussian hypergeometric series are obtained by using different techniques 
(see for example \cite{Ahlgren, BK, BK-1, EG-1, greene, GS, Kalita, ono, sadek, TB}). 
Finding values of Gaussian hypergeometric series containing arbitrary characters at specific values of the argument 
is a difficult problem. In this article, we have used our product formulas to find special values of ${_{4}}F_3$- and ${_{3}}F_2$-hypergeometric series. 
In the following theorem, we find special values of ${_{4}}F_3$-hypergeometric series at general values of the argument.
\begin{theorem}\label{Value-41}
	Let $q\equiv 1\pmod{4}$. Let $A \in \widehat{\mathbb{F}_q^{\times}}$ be such that  $A^2\not\in \{\varepsilon, \varphi, \chi_4, \overline{\chi_4}\}$. 
	For $x\neq 0, 1$, we have
	\begin{align}
		(i)~&{_{4}}F_3\left(\begin{array}{cccccc}
			A^2, & A^2\varphi, & A^2\chi_4, & A^2\overline{\chi_4} \\
			& A^4\varphi, & A^4, & \varphi 
		\end{array}\mid 4x(1-x)\right)
		=\frac{\overline{A^4}\varphi(2)}{g(\varphi)g(A^2\chi_4)g(\overline{A^2}\chi_4)}\notag\\
		&\times \left(\frac{1+\varphi(1-x)}{2}\right)\left(\frac{1+\varphi(x)}{2}\right)
		\left(\overline{A^4}(1+\sqrt{1-x})+ \overline{A^4}(1-\sqrt{1-x})\right)\notag\\
		&\times \left(\overline{A^4}(1+\sqrt{x})+ \overline{A^4}(1-\sqrt{x})\right)-\frac{A^2\varphi(x)\overline{A^4}\varphi(2)\overline{A^6}(x-1)g(\varphi)}{qg(A^2\chi_4)g(\overline{A^2}\chi_4)}\delta\left(\frac{1-2x}{(1-x)^2}\right),\notag\\
		(ii)~&{_{4}}F_3\left(\begin{array}{cccccc}
			A^2, & A^2\varphi, & A^2\chi_4, & A^2\overline{\chi_4} \\
			& A^4\varphi, & A^4, & \varphi 
		\end{array}\mid \frac{-4x}{(1-x)^2}\right)
		=\frac{\overline{A^4}\varphi(2)}{g(\varphi)g(A^2\chi_4)g(\overline{A^2}\chi_4)}\notag\\
		&\times \frac{(1+\varphi(1-x))(1+\varphi(x^2-x))}{4}
		\left(\overline{A^4}\left(1+\frac{1}{\sqrt{1-x}}\right)+ \overline{A^4}\left(1-\frac{1}{\sqrt{1-x}}\right)\right)\notag\\
		&\times\left(\overline{A^4}\left(1+\sqrt{\frac{x}{x-1}}\right)+ \overline{A^4}\left(1-\sqrt{\frac{x}{x-1}}\right)\right)\notag\\
		&-\frac{A^2\varphi(x)\overline{A^4}\varphi(2)A^4\varphi(x-1)g(\varphi)}{qg(A^2\chi_4)g(\overline{A^2}\chi_4)}\delta(1-x^2).\notag
	\end{align}
\end{theorem}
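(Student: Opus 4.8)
The plan is to obtain part $(i)$ directly from Theorem \ref{MT41} by a suitable specialization of the characters, and then to deduce part $(ii)$ from part $(i)$ by the single substitution $x\mapsto \frac{x}{x-1}$. For part $(i)$, I would apply Theorem \ref{MT41} with the choice $B=A\chi_4$ and $C=A^4$. With this substitution one has $B^2=A^2\varphi$, $AB=A^2\chi_4$, $AB\varphi=A^2\overline{\chi_4}$, $A^2B^2=A^4\varphi$ and $A^2B^2\overline C=\varphi$, so the ${_4}F_3$ on the right-hand side of Theorem \ref{MT41} becomes exactly the series in part $(i)$. The hypothesis $A^2\notin\{\varepsilon,\varphi,\chi_4,\overline{\chi_4}\}$ is precisely what forces every spurious $\delta$-term to vanish: indeed $AB\overline C=\overline{A^2}\chi_4$, $\overline{AB}C\varphi=A^2\chi_4$, $AB=A^2\chi_4$ and $AB\varphi=A^2\overline{\chi_4}$ are all nontrivial, so $\delta(AB\overline C)=\delta(\overline{AB}C\varphi)=\delta(AB)=\delta(AB\varphi)=0$, and likewise for the $\varphi$-twisted versions. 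Only the term carrying $\delta\!\left(\frac{1-2x}{(1-x)^2}\right)$ survives, and after substitution its coefficient is $\frac{q\,\overline{A^6}(1-x)\,A^2\varphi(x)}{g(A^2)^2\,g(\overline{A^2}\varphi)^2}$. Thus Theorem \ref{MT41} reduces to an identity expressing the target ${_4}F_3$ as a constant multiple of the product $P_1P_2$ plus this single $\delta$-term, where
\begin{align*}
P_1={_{2}}F_1\!\left(\begin{array}{cc}A^2,&A^2\varphi\\&A^4\end{array}\mid x\right),\qquad
P_2={_{2}}F_1\!\left(\begin{array}{cc}A^2,&A^2\varphi\\&\varphi\end{array}\mid x\right).
\end{align*}

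The heart of the proof is the closed-form evaluation of $P_1$ and $P_2$. These are finite field analogues of the classical quadratic transformations for ${_2}F_1(a,a+\tfrac12;\,\cdot\,;x)$, and I expect them to read
\begin{align*}
P_1&=\kappa_1\,\frac{1+\varphi(1-x)}{2}\Big(\overline{A^4}(1+\sqrt{1-x})+\overline{A^4}(1-\sqrt{1-x})\Big),\\
P_2&=\kappa_2\,\frac{1+\varphi(x)}{2}\Big(\overline{A^4}(1+\sqrt{x})+\overline{A^4}(1-\sqrt{x})\Big),
\end{align*}
with explicit Gauss-sum constants $\kappa_1,\kappa_2$. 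The factors $\frac{1+\varphi(1-x)}{2}$ and $\frac{1+\varphi(x)}{2}$ encode that the expressions are supported on $1-x$ (respectively $x$) being a square, and the two summands correspond to the two square roots; this symmetrized shape is exactly what a finite field version of ${_2}F_1(a,a+\tfrac12;\tfrac12;x)=\tfrac12[(1+\sqrt x)^{-2a}+(1-\sqrt x)^{-2a}]$ produces. Establishing these two evaluations---either by invoking an available quadratic transformation or by a direct Gauss/Jacobi sum computation starting from Greene's definition \eqref{Greene-def-4}---is the step I expect to be the main obstacle, since it requires carefully tracking the support conditions and the Gauss-sum normalizations.

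With $P_1$ and $P_2$ in hand, I would multiply them, fold in the constant prefactor coming from Theorem \ref{MT41}, and simplify the Gauss sums. Here one uses standard product and multiplication relations among Gauss sums (of Hasse--Davenport type, together with the duplication identities recorded as Lemma \ref{gj1} and Lemma \ref{g1}) to collapse the accumulated factor into the asserted constant $\frac{\overline{A^4}\varphi(2)}{g(\varphi)g(A^2\chi_4)g(\overline{A^2}\chi_4)}$, and similarly to rewrite the surviving $\delta$-coefficient (after absorbing the reflection factor $\overline{A^6}(-1)$ that converts $\overline{A^6}(1-x)$ into $\overline{A^6}(x-1)$) as $-\frac{A^2\varphi(x)\overline{A^4}\varphi(2)\,\overline{A^6}(x-1)\,g(\varphi)}{q\,g(A^2\chi_4)g(\overline{A^2}\chi_4)}$. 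This yields part $(i)$ verbatim.

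Finally, part $(ii)$ follows from part $(i)$ by replacing $x$ with $u:=\frac{x}{x-1}$. One checks $4u(1-u)=\frac{-4x}{(1-x)^2}$, so the left-hand ${_4}F_3$ of part $(i)$ at argument $4u(1-u)$ is exactly the series in part $(ii)$; since the parameters are unchanged it suffices to substitute $u$ into the right-hand side of part $(i)$. Under this substitution $1-u=\frac{1}{1-x}$ gives $\sqrt{1-u}=\frac{1}{\sqrt{1-x}}$ and $\sqrt{u}=\sqrt{\tfrac{x}{x-1}}$, while $\varphi(1-u)=\varphi(1-x)$ and $\varphi(u)=\varphi(x^2-x)$ turn $\frac{1+\varphi(1-u)}{2}\cdot\frac{1+\varphi(u)}{2}$ into $\frac{(1+\varphi(1-x))(1+\varphi(x^2-x))}{4}$. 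A short computation gives $\frac{1-2u}{(1-u)^2}=1-x^2$, so the $\delta$-term becomes $\delta(1-x^2)$, and the character part of its coefficient transforms as $A^2\varphi(u)\,\overline{A^6}(u-1)=A^2\varphi(x)\,A^4\varphi(x-1)$, matching the stated expression. The restrictions $x\neq 0,1$ guarantee that $u$ is defined and avoids the excluded arguments, completing part $(ii)$.
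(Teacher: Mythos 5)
Your overall strategy coincides with the paper's: specialize Theorem \ref{MT41} at $B=A\chi_4$, $C=A^4$, observe that the hypothesis $A^2\notin\{\varepsilon,\varphi,\chi_4,\overline{\chi_4}\}$ kills every spurious $\delta$-term, and obtain part (ii) from part (i) by $x\mapsto\frac{x}{x-1}$; your bookkeeping for that substitution (including $\frac{1-2u}{(1-u)^2}=1-x^2$ and $A^2\varphi(u)\overline{A^6}(u-1)=A^2\varphi(x)A^4\varphi(x-1)$) is exactly what the paper does and is correct. The genuine gap is in part (i): the closed-form evaluations of the two factors $P_1={_{2}}F_1\left(\begin{smallmatrix}A^2, & A^2\varphi\\ & A^4\end{smallmatrix}\mid x\right)$ and $P_2={_{2}}F_1\left(\begin{smallmatrix}A^2, & A^2\varphi\\ & \varphi\end{smallmatrix}\mid x\right)$ are only conjectured, with unspecified constants $\kappa_1,\kappa_2$, and you yourself flag them as ``the main obstacle.'' Since the entire content of the theorem is precisely these evaluations, the proposal as written proves nothing yet.

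The paper closes this gap with no new quadratic transformation and no fresh Gauss-sum computation; both ingredients are already in its preliminaries, and the second one is the idea your sketch is missing. For $P_2$, Lemma \ref{33} combined with \eqref{relation-Fu-Greene} gives directly, for $x\neq 0$,
\begin{align*}
P_2=\frac{J(A^2\varphi,\overline{A^2})}{q}\left(\frac{1+\varphi(x)}{2}\right)\left(\overline{A^4}(1+\sqrt{x})+\overline{A^4}(1-\sqrt{x})\right),
\end{align*}
since here $BC(-1)=A^2(-1)=1$, $J(B,\overline{B}C)=J(A^2\varphi,\overline{A^2})$, and the $\delta(x)$ term drops. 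For $P_1$ there is no direct evaluation in the literature cited; the key observation is that the choice $C=A^4$ was engineered so that $A^2\cdot A^2\varphi\cdot\overline{A^4}=\varphi$, whence Greene's reflection formula, Theorem \ref{thm7}(i), converts $P_1$ into the \emph{same} series as $P_2$ but at argument $1-x$: one gets $P_1=A^2(-1)\,{_{2}}F_1\left(\begin{smallmatrix}A^2, & A^2\varphi\\ & \varphi\end{smallmatrix}\mid 1-x\right)$, where the boundary terms $\delta(x)$, $\delta(1-x)$ vanish because $x\neq 0,1$ and $A^2(-1)=1$. Thus $\kappa_1=\kappa_2=J(A^2\varphi,\overline{A^2})/q$, and the remaining constants collapse to the stated ones via Lemma \ref{gj1} and Lemma \ref{g1}. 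If you supply these two steps---citing Lemma \ref{33} with \eqref{relation-Fu-Greene} for $P_2$, and Theorem \ref{thm7}(i) for $P_1$---your outline becomes a complete proof identical in substance to the paper's.
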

We note that the above formulas are well-defined. Since $q\equiv 1\pmod{4}$, $x-1$ is a square if and only if $1-x$ is a square. 
In (i), if $x$ or $1-x$ is not a square, then the term containing the product $(1+\varphi(x))(1+\varphi(1-x))$ will disappear. In (ii), 
if $x$ or $1-x$ is not a square, then the term containing the product $(1+\varphi(x^2-x))(1+\varphi(1-x))$ will disappear. 
\par Putting $x=\frac{1}{2}$ in Theorem \ref{Value-41} (i) we find the following special value of a ${_{4}}F_3$-Gaussian hypergeometric series.
\begin{corollary}\label{V41C1}
	Let $q\equiv 1\pmod{4}$. Let $A \in \widehat{\mathbb{F}_q^{\times}}$ be such that  $A^2\not\in \{\varepsilon, \varphi, \chi_4, \overline{\chi_4}\}$. We have
	\begin{align}
		&{_{4}}F_3\left(\begin{array}{cccccc}
			A^2, & A^2\varphi, & A^2\chi_4, & A^2\overline{\chi_4} \\
			& A^4\varphi, & A^4, & \varphi 
		\end{array}\mid 1 \right)
		= \displaystyle - \frac{g(\varphi)}{qg(A^2\chi_4)g(\overline{A^2}\chi_4)}\notag\\
		&+\left\{
		\begin{array}{ll}
		\displaystyle\frac{1}{g(\varphi)g(A^2\chi_4)g(\overline{A^2}\chi_4)}
		\left[2 + \overline{A^8}\left(1+\sqrt{2}\right)+ \overline{A^8}\left(1-\sqrt{2}\right)\right], 
		&\hspace{-.2cm} \hbox{if $q\equiv 1 \hspace{-.2cm}\pmod{8}$;} \\
		0, & \hspace{-.2cm} \hbox{if $q\equiv 5\hspace{-.2cm}\pmod{8}$.}
		\end{array}\right.	\notag
	\end{align}	
\end{corollary}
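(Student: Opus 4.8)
The plan is to specialize Theorem \ref{Value-41}(i) at $x=\tfrac12$. Since $4x(1-x)=1$ when $x=\tfrac12$, the left-hand side of Theorem \ref{Value-41}(i) becomes exactly the ${_4}F_3$ evaluated at $1$ that we wish to compute, so the corollary reduces to simplifying the right-hand side of Theorem \ref{Value-41}(i) at $x=\tfrac12$. The hypotheses $A^2\notin\{\varepsilon,\varphi,\chi_4,\overline{\chi_4}\}$ are inherited directly; they guarantee that Theorem \ref{Value-41}(i) applies and that the Gauss sums in the denominators are those of nontrivial characters.

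First I would dispose of the $\delta$-term. At $x=\tfrac12$ we have $\frac{1-2x}{(1-x)^2}=0$, so $\delta\!\big(\frac{1-2x}{(1-x)^2}\big)=1$ and this term survives. Evaluating its coefficient at $x=\tfrac12$ gives $A^2\varphi(\tfrac12)=\overline{A^2}(2)\varphi(2)$, $\overline{A^4}\varphi(2)=\overline{A^4}(2)\varphi(2)$, and $\overline{A^6}(x-1)=\overline{A^6}(-\tfrac12)=A^6(2)$, where I use that $\overline{A^6}(-1)=1$ because $A(-1)=\pm1$ forces every even power of $A$ to be trivial at $-1$. Multiplying these and using $\varphi(2)^2=1$ together with $\overline{A^2}(2)\,\overline{A^4}(2)\,A^6(2)=A^{0}(2)=1$, the coefficient collapses to $1$, so the $\delta$-term equals $-\frac{g(\varphi)}{qg(A^2\chi_4)g(\overline{A^2}\chi_4)}$, which is precisely the leading term of the claimed formula (independently of the residue of $q$ modulo $8$).

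It remains to evaluate the main term. At $x=\tfrac12$ the two factors $\frac{1+\varphi(1-x)}{2}$ and $\frac{1+\varphi(x)}{2}$ both equal $\frac{1+\varphi(2)}{2}$, and since $\varphi(2)=1$ exactly when $q\equiv1\pmod 8$ (and $\varphi(2)=-1$ when $q\equiv5\pmod 8$, under the standing hypothesis $q\equiv1\pmod4$), their product is $1$ or $0$ accordingly. This produces the case split in the statement, with the $q\equiv5\pmod8$ case being immediate. In the case $q\equiv1\pmod8$ the element $2$ is a square, so I fix a root $\sqrt2\in\mathbb{F}_q$ and write $\sqrt{1/2}=1/\sqrt2$ (recall that the expression $\overline{A^4}(1+\sqrt{1/2})+\overline{A^4}(1-\sqrt{1/2})$ is symmetric under $\sqrt{1/2}\mapsto-\sqrt{1/2}$, hence independent of the chosen root). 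The key computation is then the simplification of
\[
\big(\overline{A^4}(1+\sqrt{1/2})+\overline{A^4}(1-\sqrt{1/2})\big)^2,
\]
which I would carry out by factoring out $A^4(\sqrt2)$ and using $(\sqrt2+1)(\sqrt2-1)=1$, so that $\overline{A^4}(\sqrt2-1)=A^4(\sqrt2+1)$; this turns the bracket into $A^4(\sqrt2)\big(\overline{A^4}(1+\sqrt2)+A^4(1+\sqrt2)\big)$ and its square into $A^4(2)\big(\overline{A^4}(1+\sqrt2)+A^4(1+\sqrt2)\big)^2$. Combining with the prefactor $\overline{A^4}\varphi(2)=\overline{A^4}(2)$ (here $\varphi(2)=1$) and the identity $\overline{A^4}(2)A^4(2)=1$ leaves $\frac{1}{g(\varphi)g(A^2\chi_4)g(\overline{A^2}\chi_4)}\big(\overline{A^4}(1+\sqrt2)+A^4(1+\sqrt2)\big)^2$.

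Finally I expand the square, noting the cross term is $2$, and use $(1+\sqrt2)(1-\sqrt2)=-1$ together with $\overline{A^8}(-1)=1$ to rewrite $A^8(1+\sqrt2)=\overline{A^8}(1-\sqrt2)$; this produces $2+\overline{A^8}(1+\sqrt2)+\overline{A^8}(1-\sqrt2)$ and matches the stated $q\equiv1\pmod8$ term. The whole argument is a specialization, so there is no conceptual obstacle: the only genuine step is the square-root character bookkeeping in the last two paragraphs, where one must track the fixed choice of $\sqrt2$ and exploit the relations $(\sqrt2+1)(\sqrt2-1)=1$, $(1+\sqrt2)(1-\sqrt2)=-1$, and the triviality of even powers of $A$ at $-1$. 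Care is also needed to confirm $\varphi(2)=1\iff q\equiv1\pmod8$ under $q\equiv1\pmod4$, which is what generates the two cases.
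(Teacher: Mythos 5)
Your proposal is correct and follows exactly the paper's route: the paper obtains this corollary by setting $x=\tfrac12$ in Theorem \ref{Value-41}(i), which is precisely your specialization, and your bookkeeping (the $\delta$-term coefficient collapsing to $1$ via $\varphi(2)^2=1$ and $\overline{A^2}(2)\,\overline{A^4}(2)\,A^6(2)=1$, the case split from $\varphi(2)=1\iff q\equiv 1\pmod 8$, and the square-root manipulations using $(\sqrt2+1)(\sqrt2-1)=1$, $(1+\sqrt2)(1-\sqrt2)=-1$, and $\overline{A^8}(-1)=1$) is accurate. The paper leaves these details implicit, so your write-up simply supplies the verification it omits.
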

In \cite{ono}, Ono found several special values of ${_{2}}F_1$- and ${_{3}}F_2$-Gaussian hypergeometric series containing trivial and quadratic characters 
as parameters by counting points on CM elliptic curves. 
We find the following special value which generalizes a result of Ono.
\begin{theorem}\label{Value-44}
 Let $A\in \widehat{\mathbb{F}_q^{\times}}$ be such that $A^2, A^6\neq\varepsilon$. Then we have
\begin{align}
 &{_{3}}F_2\left(\begin{array}{ccccccc}
             A^2, & A^6, & A^4\varphi \\
                  & A^8, & A^4 
        \end{array}\mid -8\right) = \frac{\overline{A}(256)g(A^2)^2 g(\overline{A^6})}{qg(\overline{A^2})}\left[{A^3\choose A^2} + {A^3\varphi\choose A^2}\right]^2\notag\\
&\hspace{4cm}-\frac{\overline{A}(4096)}{q}-\frac{q-1}{q^3}\overline{A}(4096)\varphi(2)g(\overline{A^2}\varphi)g(A^2\varphi)\delta(A^4\varphi).\notag       
\end{align}
\end{theorem}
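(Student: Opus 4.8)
The plan is to derive Theorem~\ref{Value-44} as a special case of the Clausen-type product formula in Theorem~\ref{MT41C2} by choosing the parameters so that the two ${_2}F_1$ factors collapse into a single closed-form quantity. Specifically, in Theorem~\ref{MT41C2} I would set $A \mapsto A^2$ and $B \mapsto A^6$, so that $AB \mapsto A^8$, $AB\varphi \mapsto A^8\varphi$, $A^2 \mapsto A^4$, $B^2 \mapsto A^{12}$, and $A^2B^2 \mapsto A^{16}$; then I would choose the argument $4x(1-x)$ to equal $-8$, which forces a specific value of $x$ (one checks $4x(1-x) = -8$ gives $x$ with $x - x^2 = -2$, so $\overline{AB}\varphi(x-x^2)$ becomes an explicit character value on $-2$). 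The resulting identity expresses the square of a ${_2}F_1$ evaluated at $-8$ in terms of the target ${_3}F_2\left(\begin{array}{ccc} A^4, & A^{12}, & A^8 \\ & A^{16}, & A^8\varphi\end{array}\mid -8\right)$ plus an explicit $\delta$-term. Comparing indices, this does not immediately match the stated ${_3}F_2$ with parameters $A^2, A^6, A^4\varphi$ over $A^8, A^4$; so the cleaner route is to apply Theorem~\ref{MT41C2} with $A \mapsto A$ and $B \mapsto A^3$, giving top parameters $A^2, A^6, A^4$ and bottom $A^8, A^4\varphi$, and then use a two-one or reindexing transformation to rewrite it in the stated form.

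The key computational engine will be to evaluate the single ${_2}F_1$ that gets squared. First I would identify the relevant ${_2}F_1\left(\begin{array}{cc} A, & A^3 \\ & A^4\varphi\end{array}\mid -8\right)$ (or the appropriately matched parameters) and express it using binomial-coefficient or Gauss-sum machinery. The factor $\left[{A^3\choose A^2} + {A^3\varphi\choose A^2}\right]^2$ appearing in the statement strongly suggests that this ${_2}F_1$ at argument $-8$ admits a finite evaluation as a sum of exactly two binomial coefficients, ${A^3\choose A^2}$ and ${A^3\varphi\choose A^2}$. I would establish this by writing the ${_2}F_1$ via its definition \eqref{Greene-def-4}, applying the Gauss--Jacobi relations (the lemmas referenced as Lemma~\ref{gj1} and Lemma~\ref{g1} in the excerpt), and collapsing the character sum using the special structure at $x$ with $4x(1-x)=-8$; the quadratic character combination $1 + \varphi$ is what produces the two-term split into $A^3$ and $A^3\varphi$.

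The final step is bookkeeping on the prefactors: I would substitute the explicit value of $x$ into both the main $\tfrac{AB(4)g(B)^2g(A\varphi)^2}{qg(A^2)g(B^2)}$ coefficient and the correction $\tfrac{g(B)^2g(A\varphi)^2\overline{AB}\varphi(x-x^2)}{q^2g(A^2)g(B^2)}$ of Theorem~\ref{MT41C2}, translate $AB(4)$ and $\overline{AB}\varphi(x-x^2)$ into powers like $\overline{A}(256)$ and $\overline{A}(4096)$ (note $256 = 4^4$ and $4096 = 4^6$, consistent with the parameter shifts $A\mapsto A^2,\ B\mapsto A^6$), and simplify the Gauss-sum ratio into $\tfrac{\overline{A}(256)g(A^2)^2 g(\overline{A^6})}{q\,g(\overline{A^2})}$ using $g(\chi)g(\overline\chi)=\chi(-1)q$ for nontrivial $\chi$ and the Hasse--Davenport-type relations in Lemma~\ref{g1}. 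The $\delta(A^4\varphi)$ term would arise from the $\overline{AB}\varphi$ correction precisely when $A^4\varphi = \varepsilon$, matching the stated final summand.

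The main obstacle I anticipate is twofold: first, verifying that the squared ${_2}F_1$ at $-8$ genuinely reduces to the clean two-binomial expression $\left[{A^3\choose A^2}+{A^3\varphi\choose A^2}\right]^2$, which requires a nontrivial evaluation of a hypergeometric series at a special algebraic argument rather than a formal manipulation; and second, reconciling the parameter mismatch between what Theorem~\ref{MT41C2} produces directly and the exact parameter arrangement $\left(\begin{array}{ccc}A^2, A^6, A^4\varphi\end{array};\ A^8, A^4\right)$ demanded in the statement, which will likely need an application of a Greene-type ${_3}F_2$ transformation (permutation of numerator parameters together with a $\overline{B}$-reflection) and careful tracking of which exclusion conditions ($A^2, A^6 \neq \varepsilon$) are needed to keep all Gauss sums and binomial coefficients well-defined.
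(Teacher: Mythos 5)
Your high-level strategy --- specialize a Clausen-type product formula so that the argument $4x(1-x)$ becomes $-8$, then evaluate the resulting squared ${_2}F_1$ in closed form --- is indeed the paper's strategy, but the execution you describe breaks down at three points, all of which are cured simultaneously by the paper's actual choice: it specializes Corollary \ref{MT41C1} (not Theorem \ref{MT41C2}) with $x=-1$ and $B=A^3\varphi$, the extra $\varphi$ being the crucial detail you missed. With that substitution, $B^2=A^6$, $AB=A^4\varphi$ lands in the numerator row and $AB\varphi=A^4$ in the denominator row, so the ${_3}F_2$ of the statement appears with exactly the right parameter arrangement, and the left-hand side is ${_2}F_1(A^2,A^6;A^4\mid -1)^2$, which is evaluated by Greene's value at $-1$ (Lemma \ref{g16}, applicable because $A^6=(A^3)^2$ is a square), giving $\left[{A^3\choose A^2}+{A^3\varphi\choose A^2}\right]^2$. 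Your substitution $B=A^3$ instead produces upper parameter $A^4$ and lower parameter $A^4\varphi$, and the repair you invoke, ``a two-one or reindexing transformation,'' does not exist: Greene's series is built from paired binomial coefficients, so moving $\varphi$ across rows changes the relevant summand from ${A^4\chi\choose A^4\varphi\chi}$ to ${A^4\varphi\chi\choose A^4\chi}$, and these are not equal; the classical symmetry in upper and lower parameters separately has no finite-field counterpart here, and no transformation quoted in the paper converts one series into the other.

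Second, even with the corrected substitution $B=A^3\varphi$, Theorem \ref{MT41C2} is the wrong tool because its hypotheses include $AB,AB\varphi\neq\varepsilon$, i.e.\ $A^4\neq\varepsilon,\varphi$. Theorem \ref{Value-44} assumes only $A^2,A^6\neq\varepsilon$; its conclusion contains a $\delta(A^4\varphi)$ term that is nonzero precisely when $A^4=\varphi$, and its key application (Corollary \ref{ono-8}, $A=\chi_4$) has $A^4=\varepsilon$, so your route cannot reach the full statement. One must use the version of the product formula that retains the correction terms, namely Corollary \ref{MT41C1}: with $B=A^3\varphi$ its conditions are exactly $A^2,A^6\neq\varepsilon$, its $\delta(AB)=\delta(A^4\varphi)$ term produces the stated $\delta(A^4\varphi)$ summand, and its $\delta(AB\varphi)=\delta(A^4)$ contribution cancels against the $\delta(A^4)$ arising from $g(A^4)g(\overline{A^4})=g(AB\varphi)g(\overline{AB}\varphi)$ via Lemma \ref{g1}, which is why no $\delta(A^4)$ survives in the final formula. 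Third, the left side of Theorem \ref{MT41C2} is a ${_2}F_1$ at the argument $-8$ itself, and your plan to evaluate it ``by collapsing the character sum'' is exactly the hard step you flag as an obstacle; starting from Corollary \ref{MT41C1} at $x=-1$ reduces that step to the already-stated Lemma \ref{g16}, after which only Gauss-sum bookkeeping with Lemma \ref{g1} (as in your final paragraph) remains.
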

Putting $A=\chi_4 $ in Theorem \ref{Value-44} we readily obtain the following special value obtained by Ono \cite{ono} when $q\equiv 1 \pmod{4}$.
\begin{corollary}\label{ono-8} Let $q\equiv 1\pmod{4}$. We have
\begin{align}
	{_{3}}F_2\left(\begin{array}{ccccccc}
		\varphi, & \varphi, & \varphi \\
		& \varepsilon, & \varepsilon
	\end{array}\mid -8\right)=	\left[{\chi_4\choose\varphi}+ {\overline{\chi_4}\choose\varphi}\right]^2 - \frac{1}{q}.\notag
\end{align}
\end{corollary}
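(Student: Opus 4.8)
The plan is to obtain Corollary \ref{ono-8} as the single specialization $A=\chi_4$ of Theorem \ref{Value-44}. First I would verify that the hypotheses of Theorem \ref{Value-44} hold for this choice. Since $q\equiv 1\pmod 4$ the fixed quartic character $\chi_4$ exists, and from $\chi_4^4=\varepsilon$ we get $A^2=\chi_4^2=\varphi\neq\varepsilon$ and $A^6=\chi_4^6=\chi_4^2=\varphi\neq\varepsilon$, so both conditions $A^2,A^6\neq\varepsilon$ are met. Hence Theorem \ref{Value-44} applies with $A=\chi_4$, and the proof reduces to simplifying both sides under this substitution.

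Next I would record the character identities forced by $\chi_4^4=\varepsilon$, namely $\chi_4^2=\varphi$, $\chi_4^4=\chi_4^8=\varepsilon$, $\chi_4^3=\overline{\chi_4}$, and $\overline{\chi_4}\varphi=\chi_4^3\chi_4^2=\chi_4$. Substituting into the parameters of the ${_3}F_2$ on the left-hand side of Theorem \ref{Value-44}, the upper parameters $A^2,A^6,A^4\varphi$ all become $\varphi$, while the lower parameters $A^8,A^4$ both become $\varepsilon$; the argument $-8$ is unchanged. Thus the left-hand side is exactly ${_3}F_2(\varphi,\varphi,\varphi;\varepsilon,\varepsilon\mid -8)$, matching the corollary.

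I would then simplify the three terms on the right. In the main term, the substitutions give $g(A^2)=g(\overline{A^6})=g(\overline{A^2})=g(\varphi)$, so the Gauss-sum prefactor collapses to $\overline{\chi_4}(256)\,g(\varphi)^2/q$; since $256=2^8$ and $8$ is a multiple of $4$, we have $\overline{\chi_4}(256)=\overline{\chi_4}(2)^8=1$, and the standard evaluation $g(\varphi)^2=\varphi(-1)q=q$ (using $\varphi(-1)=1$ because $q\equiv 1\pmod 4$) reduces the prefactor to $1$. The bracket ${A^3\choose A^2}+{A^3\varphi\choose A^2}$ becomes ${\overline{\chi_4}\choose\varphi}+{\chi_4\choose\varphi}$, so the main term is exactly $\left[{\chi_4\choose\varphi}+{\overline{\chi_4}\choose\varphi}\right]^2$. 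For the second term, $4096=2^{12}$ gives $\overline{\chi_4}(4096)=\overline{\chi_4}(2)^{12}=1$, contributing $-1/q$. For the last term, $A^4\varphi=\varphi\neq\varepsilon$ forces $\delta(A^4\varphi)=0$, so it vanishes. Adding the three simplified contributions yields the stated value.

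Since this is a bookkeeping specialization, there is no genuine obstacle; the only points needing care are the character evaluations $\overline{\chi_4}(256)=\overline{\chi_4}(4096)=1$ (each being $\overline{\chi_4}(2)$ raised to a multiple of $4$), the collapse $g(\varphi)^2=q$, which is precisely where the hypothesis $q\equiv 1\pmod 4$ is used, and the observation that the $\delta$-term drops out because $\varphi$ is nontrivial.
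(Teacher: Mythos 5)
Your proposal is correct and follows exactly the paper's route: the paper obtains Corollary \ref{ono-8} precisely by setting $A=\chi_4$ in Theorem \ref{Value-44}, which is what you do, with the bookkeeping (the collapse $g(\varphi)^2=\varphi(-1)q=q$ for $q\equiv 1\pmod 4$, the evaluations $\overline{\chi_4}(256)=\overline{\chi_4}(4096)=1$, and the vanishing of the $\delta(A^4\varphi)$-term since $A^4\varphi=\varphi\neq\varepsilon$) carried out correctly. The paper leaves these verifications implicit, so your write-up simply makes the same specialization explicit.
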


Using our product formulas, we next find special values of ${_3}F_2$-hypergeometric series at $x=-1$ and $x=\frac{1}{4}$, respectively. 
We note that these two results were also proved by Evans and Greene, see for example \cite[Thm 1.3 \& 1.4]{EG-1}. 
\begin{theorem}\label{Value-45}
 Suppose that $C$ is a multiplicative character whose order is not equal to $1, 2, 4$. Then for $q\equiv 1\pmod 8$ we have
\begin{align}
	&{_{3}}F_2\left(\begin{array}{ccccccc}
		\varphi, & C^2\varphi, & C\varphi \\
		& C^2, & C 
	\end{array}\mid -1\right)= \left\{
	\begin{array}{ll}
		\displaystyle\frac{1}{q}, & \hbox{if $C\chi_{4}\neq\square$ ;} \vspace{.14cm}\\
		\displaystyle\frac{1}{q} + \frac{2}{q^2}Re(J(D, \varphi)J(\overline{D}\chi_{4}, \varphi)), & \hbox{if $C\chi_{4} = D^{2}$.}
	\end{array}
	\right.\notag   
\end{align}
 \end{theorem}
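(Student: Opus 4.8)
The plan is to derive Theorem \ref{Value-45} from the finite field Clausen identity, Theorem \ref{MT41C2}, by a suitable specialization. First I would match the ${_3}F_2$ in Theorem \ref{Value-45} with the one in Theorem \ref{MT41C2}. Writing $A=\chi_4$ and $B=\overline{\chi_4}C\varphi$, and using $\varphi=\chi_4^2$ to simplify $\overline{\chi_4}C\varphi=C\chi_4$, one checks that $A^2=\varphi$, $B^2=C^2\varphi$, $AB=C\varphi$, $A^2B^2=C^2$, and $AB\varphi=C$; hence the ${_3}F_2$ in Theorem \ref{MT41C2} with these parameters is exactly the series ${_3}F_2(\varphi,C^2\varphi,C\varphi;C^2,C\mid\,\cdot\,)$ of Theorem \ref{Value-45}. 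Since $\mathrm{ord}(C)\notin\{1,2,4\}$, each of $A^2,B^2,A\overline{B}\varphi,AB,AB\varphi$ is nontrivial, so the hypotheses of Theorem \ref{MT41C2} are met. It remains to realize the argument $-1$ as $4x(1-x)$: the equation $4x^2-4x-1=0$ has roots $x=\tfrac{1\pm\sqrt{2}}{2}$, which lie in $\mathbb{F}_q$ precisely because $q\equiv1\pmod 8$ forces $2$ to be a square; moreover $x\neq 1,\tfrac12$, so Theorem \ref{MT41C2} applies.

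Next I would apply Theorem \ref{MT41C2} at this $x$ and solve for the target series. Since the argument is $-1$, the factor $\overline{AB}\varphi(x-x^2)$ collapses to an explicit monomial (here $x-x^2=-\tfrac14$), so Theorem \ref{MT41C2} rewrites the ${_3}F_2$ at $-1$ as a single Gauss-sum multiple of ${_2}F_1(\chi_4,C\chi_4;C\mid-1)^2$ plus one elementary term of size $O(1/q)$. Thus the whole problem is reduced to evaluating ${_2}F_1(\chi_4,C\chi_4;C\mid-1)$, squaring it, and simplifying Gauss sums.

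The crux is the evaluation of ${_2}F_1(\chi_4,C\chi_4;C\mid-1)$. The key observation is that its lower parameter satisfies the finite field Kummer relation $C=\overline{\chi_4}\cdot(C\chi_4)$, i.e.\ the series is of the form ${_2}F_1(a,b;\overline{b}a\mid-1)$ with $a=C\chi_4$ and $b=\chi_4$; the classical Kummer theorem then predicts a value governed by the ``half parameter'' $\sqrt{a}=\sqrt{C\chi_4}$. This square root, realized as a character $D$ with $D^2=C\chi_4$, exists exactly when $C\chi_4=\square$, which is the dichotomy in the statement. Using a finite field Kummer-type evaluation of ${_2}F_1$ at $-1$ (analogous to the ${_2}F_1$-evaluation underlying Theorem \ref{Value-44}), carried out through the defining character sum and the Gauss-sum lemmas \ref{gj1} and \ref{g1}, I would show that when $C\chi_4=D^2$ the series reduces to a ratio of Gauss sums in $D$ and $\chi_4$ that converts, via $J(\chi,\lambda)=g(\chi)g(\lambda)/g(\chi\lambda)$, into the product $J(D,\varphi)J(\overline{D}\chi_4,\varphi)$, while when $C\chi_4\neq\square$ the Kummer main term is absent. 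Squaring produces this product together with its complex conjugate (equivalently, the symmetrization over $D\leftrightarrow\overline{D}$), which is the origin of the real part $Re\big(J(D,\varphi)J(\overline{D}\chi_4,\varphi)\big)$ in the final formula.

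Finally I would combine the squared ${_2}F_1$ with the elementary term from Theorem \ref{MT41C2} and collapse the Gauss sums. Using $g(\chi)g(\overline{\chi})=\chi(-1)q$, the Hasse--Davenport relation, and the normalizations made trivial by $q\equiv1\pmod 8$ (so that $\chi_4(-1)=\varphi(-1)=\varphi(2)=1$ and the square roots are chosen consistently), the Gauss-sum prefactors cancel and the lower-order pieces assemble into the uniform constant $\tfrac1q$, with the extra $\tfrac{2}{q^2}Re(\,\cdot\,)$ surviving exactly in the square case. I expect the main obstacle to be this step together with the Kummer evaluation: obtaining the ${_2}F_1$ value in closed form and then performing the Gauss-sum bookkeeping carefully enough that all the character-value and sign factors (in particular the $C(-1)$ produced by the elementary term) collapse to the clean $\tfrac1q$ and $\tfrac1q+\tfrac2{q^2}Re(\cdots)$, rather than leaving spurious factors behind.
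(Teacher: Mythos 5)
Your proposal is correct, and although it runs parallel to the paper's proof it uses a genuinely different key input. The paper specializes Corollary \ref{MT41C1} at $A=\chi_4$ and $x=\frac{1+\sqrt{2}}{2}$ (so that the ${_3}F_2$ sits at $4x(1-x)=-1$), which leaves the square of ${_2}F_1\bigl(\varphi,B^2;\overline{\chi_4}B\mid\tfrac{1+\sqrt{2}}{2}\bigr)$ to be evaluated, and for this it imports Theorem 1.11 of \cite{TB} (equation \eqref{g20}), an external result at a quadratic-irrational argument. You instead specialize Theorem \ref{MT41C2}, whose left-hand side already sits at the argument $4x(1-x)=-1$, with $A=\chi_4$, $B=C\chi_4$; the series to be evaluated is then ${_2}F_1\left(\chi_4,C\chi_4;C\mid -1\right)$, which is exactly Greene's Kummer-type evaluation, Lemma \ref{g16}, with $\overline{A}B=C$ and the dichotomy falling on whether $B=C\chi_4$ is a square --- precisely the dichotomy in the statement. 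Since Theorem \ref{MT41C2} is obtained from Corollary \ref{MT41C1} by the quadratic transformation \eqref{mt41c2eq1}, the two routes are equivalent in substance, but yours is more self-contained: it needs only results stated in the paper rather than the external evaluation from \cite{TB}. Your parameter matching and the hypotheses of Theorem \ref{MT41C2} check out (the order condition on $C$ rules out $B^2=C^2\varphi=\varepsilon$, etc.), and your prefactor $\frac{qg(\varphi)g(C^2\varphi)}{C(4)g(C\chi_4)^2g(\overline{\chi_4})^2}$ agrees with the paper's coefficient in \eqref{Value45eq2} (apply Lemma \ref{g10} to $g(C\chi_4)g(C\overline{\chi_4})$ and Lemma \ref{g1}); from that point the Gauss-sum bookkeeping is verbatim the paper's \eqref{Value45eq3}--\eqref{Value45eq6} and produces $\frac{1}{q}+\frac{2}{q^2}Re\bigl(J(D,\varphi)J(\overline{D}\chi_4,\varphi)\bigr)$ in the square case.

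One step you should make explicit rather than fold into ``normalizations'': in the case $C\chi_4\neq\square$ the ${_2}F_1$ vanishes, and solving Theorem \ref{MT41C2} for the ${_3}F_2$ leaves $-C(-1)/q$, so the claimed value $\tfrac1q$ requires $C(-1)=-1$. This does not follow from $q\equiv1\pmod 8$ (which fixes $\chi_4(-1)=\varphi(2)=1$ but says nothing about $C(-1)$); it follows from Lemma \ref{sq-2}: $C\chi_4$ is a non-square, hence $C\chi_4(-1)=-1$, and since $\chi_4(-1)=1$ this gives $C(-1)=-1$. The paper invokes Lemma \ref{sq-2} for exactly this purpose at the end of its proof. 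With that point supplied, your argument is complete.
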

 \begin{theorem}\label{Value-46}
Suppose that $C$ is a multiplicative character which is a square and its order is strictly greater than $4$. Then we have
  \begin{align}
 & {_{3}}F_2\left(\begin{array}{ccccccc}
 \hspace{-.1cm}	\overline{C}, &\hspace{-.2cm} C^3, & \hspace{-.2cm} C \\
 	&\hspace{-.2cm} C^2, &\hspace{-.2cm} C\varphi 
 \end{array}\hspace{-.2cm} \mid \frac{1}{4}\right)= \left\{ \hspace{-.2cm}
 \begin{array}{ll}
 	\displaystyle -\frac{C(4)}{q}, & \hspace{-.25cm} \hbox{if $q\equiv 11 \hspace{-.25cm}\pmod{12}$;} \vspace{.1cm}\\
 	\displaystyle\frac{C(4)}{q}\left[q+ 2Re(J(C, \chi_{3})J(\overline{C}, \chi_{3}))\right], &\hspace{-.25cm} \hbox{if $q\equiv  1 \hspace{-.25cm} \pmod{12}.$}
 \end{array}
 \right.\notag  
 \end{align}
 \end{theorem}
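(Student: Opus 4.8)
The plan is to derive Theorem~\ref{Value-46} from the finite field Clausen identity, Theorem~\ref{MT41C2}, by specializing both its parameters and its argument. Since $C$ is assumed to be a square, I would write $C=D^{2}$ for some $D\in\widehat{\mathbb{F}_q^{\times}}$ and set $A=\overline{D}$, $B=D^{3}$ in Theorem~\ref{MT41C2}. Then $AB=C$, $AB\varphi=C\varphi$, $A^{2}=\overline{C}$, $B^{2}=C^{3}$ and $A^{2}B^{2}=C^{2}$, so the ${_3}F_2$ occurring there is exactly ${_3}F_2(\overline{C},C^{3},C;\,C^{2},C\varphi\mid\,\cdot\,)$, the series we must evaluate. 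The hypotheses $A^{2},B^{2},A\overline{B}\varphi,AB,AB\varphi\neq\varepsilon$ of Theorem~\ref{MT41C2} become $C,C^{3},C^{4},C,C^{2}\varphi\neq\varepsilon$, all of which follow from $\operatorname{ord}(C)>4$. To reach the argument $\tfrac14$ I would solve $4x(1-x)=\tfrac14$; this has a solution $x=\tfrac{2-\sqrt{3}}{4}$ in $\mathbb{F}_q$ precisely when $3$ is a square, that is $q\equiv\pm1\pmod{12}$, which is exactly the regime described by the two cases of the theorem, and one checks $x\neq 0,\tfrac12,1$ so that Theorem~\ref{MT41C2} applies.

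With this $x$ one has $x-x^{2}=\tfrac{1}{16}$, hence $\overline{AB}\varphi(x-x^{2})=\overline{C}\varphi(\tfrac{1}{16})=C(4)^{2}$ because $\tfrac{1}{16}$ is a square. Substituting into Theorem~\ref{MT41C2} and solving for the ${_3}F_2$, and simplifying the Gauss--sum coefficients via Lemma~\ref{gj1} and Lemma~\ref{g1}, I expect to obtain
\[
{_3}F_2\!\left(\begin{matrix}\overline{C},\,C^{3},\,C\\ C^{2},\,C\varphi\end{matrix}\,\Big|\,\tfrac14\right)
=\frac{q\,g(\overline{C})\,g(C^{3})}{C(4)\,g(D^{3})^{2}\,g(\overline{D}\varphi)^{2}}\,V^{2}-\frac{C(4)}{q},
\]
where $V:={_2}F_1(\overline{D},D^{3};\,C\varphi\mid\tfrac14)$. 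The offset $-C(4)/q$ is common to both cases and is precisely the contribution of the extra term in Theorem~\ref{MT41C2}, so the entire case distinction is now concentrated in the single value $V$.

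It then remains to evaluate $V$. I would expand it through Greene's character-sum representation of ${_2}F_1$, which (up to an explicit Gauss-sum factor) produces a sum of the shape $\sum_{y}D^{3}(1-y)\,D(y+3)\,\overline{D}\varphi(y)$; the cube $D^{3}(1-y)$, equivalently the ramification value $27$ of $(1-y)^{3}(y+3)$, is what injects the cubic behaviour. When $q\equiv 2\pmod 3$ (so $q\equiv 11\pmod{12}$) there is no cubic character and I expect this sum to vanish, whence $V=0$ and the value equals $-C(4)/q$. When $q\equiv 1\pmod 3$ (so $q\equiv 1\pmod{12}$) the cubic character $\chi_{3}$ exists, and after a cubic change of variable—equivalently a finite field cubic transformation applied to $V$, whose numerator parameters satisfy the cubic relation ($\beta=-3\alpha$ in the classical dictionary, here $B^{2}=\overline{A^{6}}$)—the sum should evaluate in terms of $J(C,\chi_{3})$ and $J(\overline{C},\chi_{3})$. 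Squaring $V$ and collapsing the Gauss-sum prefactor should then give $\tfrac{C(4)}{q}\bigl[q+2\,\mathrm{Re}\bigl(J(C,\chi_{3})J(\overline{C},\chi_{3})\bigr)\bigr]$.

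The main obstacle is the closed-form evaluation of $V$ at $\tfrac14$ together with its case split. Two points require care: first, the cubic reduction of the character sum must be carried out so that it yields precisely $J(C,\chi_{3})J(\overline{C},\chi_{3})$ (and its complex conjugate) rather than some other Jacobi sum; and second, the prefactor $q\,g(\overline{C})g(C^{3})/\bigl(C(4)g(D^{3})^{2}g(\overline{D}\varphi)^{2}\bigr)$ must be simplified, using the Hasse--Davenport relation and the identities of Lemmas~\ref{gj1} and~\ref{g1}, so that multiplying by $V^{2}$ produces exactly the real part $2\,\mathrm{Re}\bigl(J(C,\chi_{3})J(\overline{C},\chi_{3})\bigr)$ with the stated normalization $C(4)/q$. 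A convenient consistency check throughout is that the final formula must be independent of the chosen square root $D$ of $C$, which holds since both $C(4)$ and $J(C,\chi_{3})J(\overline{C},\chi_{3})$ depend only on $C$.
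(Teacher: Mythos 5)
Your reduction is correct, and it is essentially the paper's own reduction in a mildly disguised form: the paper specializes Corollary \ref{MT41C1} with $A^2=\overline{S^3}$, $B^2=S$ (where $S=\overline{C}$) at $x=\frac{2+\sqrt{3}}{4}$, and since Theorem \ref{MT41C2} is obtained from Corollary \ref{MT41C1} precisely by the quadratic transformation \eqref{mt41c2eq1}, your specialization $A=\overline{D}$, $B=D^3$ of Theorem \ref{MT41C2} at $4x(1-x)=\tfrac14$ is the same step. Your bookkeeping also checks out: the extra term contributes exactly $-C(4)/q$, and the whole problem collapses to the single value $V={_2}F_1\bigl(\overline{D},\,D^3;\,C\varphi\mid\tfrac14\bigr)$, which matches the ${_2}F_1$ appearing on the left side of the paper's equation \eqref{Value46eq1}.

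The genuine gap is that you never evaluate $V$, and that evaluation is where all of the theorem's content lives: the dichotomy $q\equiv 11$ versus $q\equiv 1\pmod{12}$, the vanishing in the first case, and the emergence of $J(C,\chi_3)J(\overline{C},\chi_3)$ in the second. You assert these with ``I expect this sum to vanish'' and ``should evaluate in terms of,'' which is not an argument; in particular, a character sum can well be nonzero even when no cubic character exists, so the vanishing for $q\equiv 11\pmod{12}$ requires proof, and your ``cubic change of variable'' is not specified (which finite field cubic transformation, applied how, and why it produces exactly the Jacobi sums $J(C,\chi_3)$, $J(\overline{C},\chi_3)$ with the stated normalization). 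This step is not routine: the paper itself does not derive it inside the proof, but imports it as \eqref{g22} from \cite[Thm.\ 1.10]{TB}, a main theorem of a companion paper, stated there for ${_2}F_1\bigl(\overline{S^3},\,\overline{S^2}\varphi;\,\overline{S^4}\mid\tfrac{4}{2\pm\sqrt{3}}\bigr)$ and transported to the form needed here by the inversion formula of Lemma \ref{g21} (equivalently, to your $V$ via \eqref{mt41c2eq1}). Until you either prove an analogue of that evaluation or cite it, your proof is incomplete at its central step; everything you did establish is the (correct, but comparatively easy) reduction that the paper also performs.
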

In the following theorem, we find values of ${_3}F_2$-hypergeometric series at $x=-8$. 
 \begin{theorem}\label{Value-43}
Let $A \in \widehat{\mathbb{F}_q^{\times}}$ be such that $A^2, A^6\neq\varepsilon$ and $A^4\neq\varphi$. Then we have
 \begin{align}
 &{_{3}}F_2\left(\begin{array}{ccccccc}
               \overline{A^2}, & A^2 , & \varphi \\
                  & A^4, & \overline{A^4} 
         \end{array}\mid -8\right)
 = \frac{g(\varphi)}{g(\overline{A^4})g(A^4\varphi)}{\overline{A^2}\choose\overline{A^4}}^{-1}\left[{\overline{A}\choose A^2} + {\varphi\overline{A}\choose A^2} \right]\notag\\
 &\times \left[{A\choose \overline{A^2}}+ {\varphi A\choose \overline{A^2}}\right]+\frac{q-1}{q}{\overline{A^2}\choose\overline{A^4}}^{-1}{_{3}}F_2\left(\begin{array}{ccccccc}
              A^2 , & \overline{A^2} , & \varphi \\
                  & \varepsilon, & \overline{A^4} 
         \end{array}\mid -8\right)\delta(\overline{A^4})\notag\\
         &-\frac{1}{q^2}{\varphi\choose\overline{A^4}}{\overline{A^2}\choose\overline{A^4}}^{-1}
 -\frac{(q-1)g(\varphi)}{q^3g(\overline{A^4})g(A^4\varphi)}{\overline{A^2}\choose\overline{A^4}}^{-1}\left[ \delta(\overline{A^4}) + q \right].\notag 
 \end{align}
\end{theorem}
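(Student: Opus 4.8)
The plan is to read off Theorem~\ref{Value-43} as a degenerate specialization of the master product formula of Theorem~\ref{MT41}. Writing $a,b,c$ for the three characters appearing in Theorem~\ref{MT41} (to distinguish them from the character $A$ of Theorem~\ref{Value-43}), I would set $b=\overline{a}$, $c=a^{4}$, and take $x=-1$, so that the hypergeometric argument becomes $4x(1-x)=-8$. Under this choice $ab=\varepsilon$, $a^{2}b^{2}=\varepsilon$, $a^{2}b^{2}\overline{c}=\overline{a^{4}}$ and $ab\varphi=\varphi$; the constraints $a^{2},b^{2}\ne\varepsilon$, $a^{2}\ne c$, $b^{2}\ne c$ of Theorem~\ref{MT41} become exactly $A^{2},A^{6}\ne\varepsilon$ after identifying $a=A$, while the remaining hypothesis $A^{4}\ne\varphi$ is precisely what forces $\delta(\overline{ab}\,c\varphi)=\delta(A^{4}\varphi)=0$. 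The left-hand side of Theorem~\ref{MT41} then collapses to the product ${_2}F_1(A^{2},\overline{A^{2}};A^{4}\mid-1)\,{_2}F_1(A^{2},\overline{A^{2}};\overline{A^{4}}\mid-1)$, and the leading ${_4}F_3$ on the right acquires upper parameters $A^{2},\overline{A^{2}},\varepsilon,\varphi$ and lower parameters $\varepsilon,A^{4},\overline{A^{4}}$ at argument $-8$.

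Two evaluations then drive everything. First, each ${_2}F_1$ factor at $-1$ is of Kummer type, since its lower character $A^{4}$ is the product $A^{2}\cdot A^{2}$ of the two upper characters; I would aim to show that a quadratic (Kummer) evaluation collapses each factor to a Gauss-sum constant times a sum over the two square roots of one upper character, namely ${_2}F_1(A^{2},\overline{A^{2}};\overline{A^{4}}\mid-1)$ a constant times $[{\overline{A}\choose A^{2}}+{\varphi\overline{A}\choose A^{2}}]$ and, by the $A\leftrightarrow\overline{A}$ symmetry, the companion factor the same constant times $[{A\choose\overline{A^{2}}}+{\varphi A\choose\overline{A^{2}}}]$. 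This manufactures the product of the two brackets in the statement. Second, I would pass from the leading ${_4}F_3$ to the target ${_3}F_2(\overline{A^{2}},A^{2},\varphi;A^{4},\overline{A^{4}}\mid-8)$ by comparing their defining character sums \eqref{Greene-def-4}: after cancelling the common factors and rewriting the surviving binomials through \eqref{eq-0} and $J(a,b)=g(a)g(b)/g(ab)$, the ratio of the two $\chi$-summands telescopes, and using $g(\chi)g(\overline{\chi})=\chi(-1)q$ it equals the single constant $g(\overline{A^{2}})/(g(A^{2})g(\overline{A^{4}}))$ for every $\chi\ne\varepsilon$. Hence ${_4}F_3=\frac{g(\overline{A^{2}})}{g(A^{2})g(\overline{A^{4}})}\,{_3}F_2+(\text{a }\chi=\varepsilon\text{ correction})$, and isolating the $\chi=\varepsilon$ discrepancy---where $g(\varepsilon)^{2}=1$ replaces $\chi(-1)q$---yields a clean term $\frac{1}{q^{2}}{\varphi\choose\overline{A^{4}}}$, the seed of the $-\frac{1}{q^{2}}{\varphi\choose\overline{A^{4}}}{\overline{A^{2}}\choose\overline{A^{4}}}^{-1}$ summand.

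It then remains to solve the specialized identity for the ${_3}F_2$ by dividing through by the product of the ${_4}F_3$-coefficient and the reduction factor $g(\overline{A^{2}})/(g(A^{2})g(\overline{A^{4}}))$, and to simplify the resulting prefactor into $\frac{g(\varphi)}{g(\overline{A^{4}})g(A^{4}\varphi)}{\overline{A^{2}}\choose\overline{A^{4}}}^{-1}$ using ${\overline{A^{2}}\choose\overline{A^{4}}}^{-1}=qg(A^{2})/(g(\overline{A^{2}})g(A^{4}))$, the identity $g(A^{4})g(\overline{A^{4}})=q$, and the Gauss--Jacobi lemmas already quoted in the paper (Lemma~\ref{gj1} and Lemma~\ref{g1}). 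I expect the real labour, and the main obstacle, to be the bookkeeping of the degenerate $\delta$-terms of Theorem~\ref{MT41} once $ab=\varepsilon$: here $\delta(ab)=\delta(\varepsilon)=1$ switches on, $\delta(ab\varphi)=\delta(\varphi)=0$ and $\delta(A^{4}\varphi)=0$ switch off, while $\delta(ab\overline{c})=\delta(\overline{A^{4}})$ survives and feeds both the embedded ${_3}F_2(A^{2},\overline{A^{2}},\varphi;\varepsilon,\overline{A^{4}}\mid-8)\,\delta(\overline{A^{4}})$ term and, through the $g(a\overline{b})g(\overline{a}b)$-lines (which become $g(A^{2})g(\overline{A^{2}})$ since $x-x^{2}=-2$ and $\overline{ab}=\varepsilon$ at $x=-1$), the bracket $[\delta(\overline{A^{4}})+q]$ after using $A^{4}(-1)=1$. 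One must also check that the spurious $\delta\big((1-2x)/(1-x)^{2}\big)=\delta(3/4)=0$ at $x=-1$, so that term drops; reassembling the surviving constants into the four stated summands is the delicate endgame, while every structural step above is forced.
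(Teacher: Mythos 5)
Your proposal is correct and follows the same skeleton as the paper's proof: specialize Theorem~\ref{MT41} at $B=\overline{A}$, $C=A^4$ and at a root of $4x(1-x)=-8$, evaluate the resulting product of ${_2}F_1$'s by one of Greene's special-value lemmas, convert the degenerate ${_4}F_3$ (with $\varepsilon$ as both an upper and a lower parameter) into the target ${_3}F_2$, and track the surviving $\delta$-terms; your $\delta$-bookkeeping matches the paper's line by line. The differences are mirror-image choices rather than a new route. The paper takes the other root, $x=2$, and evaluates via Lemma~\ref{g18}, where you take $x=-1$ and evaluate via Lemma~\ref{g16}; both work, and although the two factors do not carry ``the same constant'' as you assert --- only one factor sits in Kummer position, the other needs the swap from \eqref{prop-300} --- the two swap factors arising in the two approaches agree, by $g(A^{2k})g(\overline{A^{2k}})=A^{2k}(-1)q$, so the same product of brackets emerges. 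For the ${_4}F_3\to{_3}F_2$ step, the paper applies \eqref{b6} to ${A^2\chi\choose\chi}{\chi\choose A^4\chi}$ and obtains ${_4}F_3={\overline{A^2}\choose\overline{A^4}}\,{_3}F_2+\frac{1}{q^2}{\varphi\choose\overline{A^4}}$ in one stroke, uniformly in $A$. Your termwise-ratio computation proves the same identity, but with one caveat: your closed form $g(\overline{A^2})/(g(A^2)g(\overline{A^4}))$ for the telescoping constant relies on $g(A^4)g(\overline{A^4})=q$ and is therefore valid only when $A^4\neq\varepsilon$, whereas the hypotheses of Theorem~\ref{Value-43} allow $A^4=\varepsilon$ --- precisely the case in which the $\delta(\overline{A^4})$-terms of the statement are alive. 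In that degenerate case ${\chi\choose A^4\chi}={\chi\choose\chi}$, and by \eqref{b5} the ratio is still constant for $\chi\neq\varepsilon$, but it equals $-\frac{1}{q}={\overline{A^2}\choose\varepsilon}$ rather than your Gauss-sum expression; the uniform fix is to name the constant ${\overline{A^2}\choose\overline{A^4}}$ (which the paper's use of \eqref{b6} produces automatically). With that small repair, your route delivers the stated formula in full.
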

We remark that Corollary \ref{ono-8} also follows from Theorem \ref{Value-43} by taking $A=\chi_4$. 
We also show that the following result of Evans and Greene \cite[Thm 1.9]{EG} 
follows from Theorem \ref{Value-43}.
\begin{theorem}\label{MTV43C1}
Suppose that $A$ is a multiplicative character whose order is not equal to $1, 2, 3, 4, 6, 8$. Then we have
\begin{align}
&{_{3}}F_2\left(\begin{array}{ccccccc}
\varphi, &A^2, & \overline{A^2} \\
& A^4, & \overline{A^4}
\end{array}\mid -8\right)
= \frac{1}{q} + \frac{\overline{A^2}(4)J(\overline{A^2}, A^6)}{q^2J(A^2, A^2)}\left[J(A^2, A)^2 + J(A^2, A\varphi)^2\right].\notag
\end{align}
\end{theorem}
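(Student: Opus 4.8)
The plan is to derive Theorem \ref{MTV43C1} from Theorem \ref{Value-43} by (a) checking that the hypotheses of the latter hold and simplify, (b) relating the two differently–normalized Gaussian ${}_3F_2$'s that occur, and (c) reducing the resulting Gauss/Jacobi-sum expression to the stated closed form. First I would note that if the order of $A$ avoids $\{1,2,3,4,6,8\}$, then the order avoids $1,2,3,6$, giving $A^2,A^6\neq\varepsilon$, and it avoids $8$ (and $1,2,4$), giving $A^4\neq\varphi$; thus the hypotheses of Theorem \ref{Value-43} hold. Crucially the order also avoids $1,2,4$, so $A^4\neq\varepsilon$, i.e. $\overline{A^4}\neq\varepsilon$ and $\delta(\overline{A^4})=0$. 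This kills the middle term of Theorem \ref{Value-43} and collapses the final bracket $[\delta(\overline{A^4})+q]$ to $q$, leaving only the product-of-binomials ``main term'' $T$ plus two explicit constant terms.

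The key subtlety is that the ${}_3F_2$ on the left of Theorem \ref{Value-43} carries $\overline{A^2}$ in the distinguished numerator slot, whereas Theorem \ref{MTV43C1} carries $\varphi$ there, so the two are \emph{not} literally equal. Writing both as Greene character sums and converting every binomial through $\binom{A}{B}=\frac{B(-1)}{q}J(A,\overline B)$ and $J(A,B)=g(A)g(B)/g(AB)$, I expect the two summands to be proportional, for all but finitely many $\chi$, by the $\chi$-independent constant $c=\frac{g(\varphi)g(A^2)}{g(\overline{A^2})g(\varphi A^4)}$. Denoting the two hypergeometric values by ${}_3F_2^{(\ref{Value-43})}$ and ${}_3F_2^{(\ref{MTV43C1})}$, this yields ${}_3F_2^{(\ref{Value-43})}=c\cdot{}_3F_2^{(\ref{MTV43C1})}+B$, where $B$ collects the exceptional $\chi$ (those making a Jacobi-sum argument trivial, e.g. $\chi\in\{\varepsilon,\varphi,A^2,\overline{A^2},A^4\}$). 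Solving for ${}_3F_2^{(\ref{MTV43C1})}$ and inserting the simplified right side of Theorem \ref{Value-43} reduces the claim to Gauss and Jacobi sums.

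Next I would simplify the main term. Using $\binom{\overline{A^2}}{\overline{A^4}}^{-1}=\frac{q\,g(A^2)}{g(\overline{A^2})g(A^4)}$ and $g(A^4)g(\overline{A^4})=q$, the Gauss-sum prefactor combines so that $\tfrac1c$ times the main term equals exactly $\frac{1}{q^2}\,|J(A^2,A)+J(A^2,A\varphi)|^2$. Expanding this and applying $|J(A^2,A)|^2=|J(A^2,A\varphi)|^2=q$ together with $\overline{J(A^2,\cdot)}=q/J(A^2,\cdot)$ (both from $g(\chi)g(\overline\chi)=\chi(-1)q$, Lemma \ref{g1}) gives $\frac{2}{q}+\frac{2\,\mathrm{Re}\!\left(J(A^2,A)\overline{J(A^2,A\varphi)}\right)}{q^2}$. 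A short Hasse-Davenport reduction (Lemma \ref{gj1}) applied to $g(A)g(A\varphi)$ and $g(A^3)g(A^3\varphi)$ then rewrites the cross term as $\frac{\overline{A^2}(4)J(\overline{A^2},A^6)}{q^2J(A^2,A^2)}\bigl[J(A^2,A)^2+J(A^2,A\varphi)^2\bigr]$, which is precisely the Jacobi-sum term of Theorem \ref{MTV43C1}, leaving a surplus $\frac2q$.

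Finally it remains to show that the two constant terms of Theorem \ref{Value-43} (divided by $c$), together with the boundary contribution $-\tfrac1c B$, collapse the surplus $\frac2q$ down to $\frac1q$. I expect this to be the main obstacle: it requires evaluating the degenerate summands at the exceptional characters, where the clean identity $J=gg/g$ breaks and $\binom{A}{\varepsilon}$-type values ($-1/q$) enter, and then matching the various powers of $A(-1)$ and values at $4$ against the explicit constants inherited from Theorem \ref{Value-43}. Once this bookkeeping confirms that the contributions sum to exactly $-\frac1q$, combining with the main term yields $\frac1q+\frac{\overline{A^2}(4)J(\overline{A^2},A^6)}{q^2J(A^2,A^2)}\bigl[J(A^2,A)^2+J(A^2,A\varphi)^2\bigr]$, which is the assertion of Theorem \ref{MTV43C1}.
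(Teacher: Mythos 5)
Your proposal is correct and follows the paper's own proof of Theorem \ref{MTV43C1} essentially verbatim: reduce to Theorem \ref{Value-43} (whose $\delta$-terms die under the order hypotheses), relate the two normalizations of the ${_3}F_2$ by the $\chi$-independent constant $c=\frac{g(\varphi)g(A^2)}{g(\overline{A^2})g(A^4\varphi)}$ (the paper's \eqref{mtv43c1eq2}), and reduce everything to Gauss and Jacobi sums via Lemmas \ref{g8}, \ref{g1}, \ref{gj1}, \ref{g10}. The two issues you flag as the remaining obstacles in fact evaporate. First, your boundary term $B$ is identically zero: converting each factor ${A_i\chi\choose B_i\chi}$ by Lemma \ref{g8} produces a $\delta$-correction $\delta(A_i\overline{B_i})$ that depends only on the fixed parameters, never on $\chi$, and $\delta(A^2)$, $\delta(\overline{A^2})$, $\delta(\varphi A^4)$ all vanish under the hypotheses; so the proportionality between the two ${_3}F_2$'s is exact for every $\chi$, not merely for all but finitely many, and there are no ``exceptional characters'' to evaluate. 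Second, the bookkeeping closes exactly as required: using ${\overline{A^2}\choose \overline{A^4}}^{-1}=\frac{qg(A^2)}{g(\overline{A^2})g(A^4)}$ and $g(A^4)g(\overline{A^4})=q$, the two constant terms of Theorem \ref{Value-43} equal $-\frac{c}{q^2}$ and $-\frac{(q-1)c}{q^2}$, summing to $-\frac{c}{q}$; after dividing by $c$ this is precisely the $-\frac{1}{q}$ that cuts your main term's surplus $\frac{2}{q}$ down to the $\frac{1}{q}$ in the statement. (One trivial slip: the $m=2$ Davenport--Hasse identity you invoke for the cross term is Lemma \ref{g10}, not Lemma \ref{gj1}.)
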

In the following theorem we find values of ${_3}F_2$-hypergeometric series at $x=4$. 
\begin{theorem}\label{Value-49}
Suppose that $S$ is a multiplicative character which is a square and its order is strictly greater than $4$. Then we have
\begin{align}
	&{_{3}}F_2\left(\begin{array}{ccccccc}
	\overline{S^3}, & \overline{S}, &  \overline{S^2}\varphi \\
	& \overline{S^4}, & \overline{S^2} 
\end{array}\mid 4\right)=-\frac{\varphi(-1)S(16)}{q}\notag\\
&+ \frac{S(16)\overline{S}(27)J(\overline{S}, \overline{S})}{J(\overline{S^3}, S)}\left\{
\begin{array}{ll}
	0, & \hbox{if $q \equiv 11 \mod(12) $;} \\
	\displaystyle  \left[{S\choose \chi_3} + {S \choose \chi_3^2}\right]^2, & \hbox{if $q \equiv 1 \mod(12)$.}
\end{array}\right.\notag
\end{align} 
\end{theorem}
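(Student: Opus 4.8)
The plan is to deduce the identity from the product formula of Theorem~\ref{MT43} by choosing the characters so that its argument $\frac{-x^2}{4(1-x)}$ becomes $4$. Explicitly, I would put $A=\overline{S^3}$, $B=\overline{S^2}\varphi$ and $C=\overline{S^2}$. Because $S$ is a square of order exceeding $4$, the characters $S^3,\ S^2\varphi,\ S^4$ are all nontrivial, so $A,B,C^2\neq\varepsilon$ and $A,B\neq C^2$, and the hypotheses of Theorem~\ref{MT43} hold. The output argument equals $4$ precisely when $\frac{-x^2}{4(1-x)}=4$, i.e. $x^2-16x+16=0$, whose roots are $x=8\pm4\sqrt{3}$. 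These lie in $\mathbb{F}_q$ exactly when $3$ is a square, that is when $q\equiv\pm1\pmod{12}$; this is the origin of the two congruence classes $q\equiv1,11\pmod{12}$ in the statement, and it explains why no evaluation is recorded for the remaining residues.

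With this choice the identity simplifies in two ways. First, $C^2\overline{B}=B$, so the two ${_2}F_1$-factors on the left of Theorem~\ref{MT43} coincide and the left-hand side collapses to the single square ${_{2}}F_1\!\left(\begin{smallmatrix}\overline{S^3}, & \overline{S^2}\varphi\\ & \overline{S^4}\end{smallmatrix}\mid x\right)^2$. Second, on the right one has $\overline{A}C^2=\overline{S}$ and $\overline{B}C^2=\overline{S^2}\varphi$, so in the ${_4}F_3$ the numerator character $\overline{B}C^2=\overline{S^2}\varphi$ coincides with the denominator character $C\varphi$. Expanding that ${_4}F_3$ through Greene's definition~\eqref{Greene-def-4} and using ${D\choose D}=-\frac1q$ for $D\neq\varepsilon$ together with ${\varepsilon\choose\varepsilon}=\frac{q-2}{q}$, the offending column contributes a factor $-\frac1q$ and the ${_4}F_3$ reduces to $-\frac1q$ times the target series $G:={_{3}}F_2\!\left(\begin{smallmatrix}\overline{S^3}, & \overline{S}, & \overline{S^2}\varphi\\ & \overline{S^4}, & \overline{S^2}\end{smallmatrix}\mid 4\right)$ plus the single correction coming from $\chi=\overline{S^2}\varphi$; matching this reduced series to the parameter order in $G$ uses the numerator/denominator symmetries of Greene's series.

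Next I would sift through the $\delta$-terms of Theorem~\ref{MT43} under this specialization. Since $S,\ S^2\varphi\neq\varepsilon$ and $x\neq2$, most of them vanish, but the degeneracies $B^2=C^2=\overline{S^4}$ and $\overline{B}C\varphi=\varepsilon$ keep a few alive: in particular $\delta(\overline{B}C\varphi)=\delta(\varepsilon)=1$ reproduces one more copy of $G$, while the last bracket contributes a constant through $\delta(B\overline{C}\varphi)=\delta(\varepsilon)=1$ (here $\varphi(x-1)=1$ because $x-1=(2\pm\sqrt3)^2$). The specialized identity is therefore linear in $G$; collecting the two copies one finds the clean coefficient $\frac1q+\frac{q-1}q=1$, so that $(\text{prefactor})\,G={_{2}}F_1(\cdots)^2-(\text{explicit }\delta\text{ and correction terms})$, and solving for $G$ produces the claimed formula, the surviving correction terms simplifying to $-\frac{\varphi(-1)S(16)}{q}$ after applying Lemma~\ref{gj1} and Lemma~\ref{g1}.

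The main obstacle is the closed evaluation of ${_{2}}F_1\!\left(\begin{smallmatrix}\overline{S^3}, & \overline{S^2}\varphi\\ & \overline{S^4}\end{smallmatrix}\mid 8\pm4\sqrt{3}\right)$. Here the lower character is the square of an upper one, $\overline{S^4}=(\overline{S^2}\varphi)^2$, which is exactly the configuration governed by a finite-field cubic transformation, and the same cubic evaluation underlies Theorem~\ref{Value-46}. I expect this ${_2}F_1$ to reduce, when $q\equiv1\pmod3$, to a constant multiple of ${S\choose\chi_3}+{S\choose\chi_3^2}$, the factor $\overline{S}(27)$ and the Jacobi ratio $\frac{J(\overline{S},\overline{S})}{J(\overline{S^3},S)}$ emerging from that transformation together with the simplification of the Gauss-sum prefactor; for $q\equiv2\pmod3$ there is no cubic character and this contribution is absent (the bracket being set to $0$), leaving only $-\frac{\varphi(-1)S(16)}{q}$. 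Squaring the evaluated ${_2}F_1$ then yields $\bigl[{S\choose\chi_3}+{S\choose\chi_3^2}\bigr]^2$, and gathering the Gauss- and Jacobi-sum prefactors completes the two-case formula. The only delicate point beyond routine sum manipulation is this cubic evaluation of the single ${_2}F_1$.
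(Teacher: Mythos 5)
Your proposal follows essentially the same route as the paper's proof: the paper specializes Theorem \ref{MT43} with $A=\overline{S^3}$, $B=\overline{S^2}\varphi$, $C=\overline{S^2}$ and $x=\frac{4}{2+\sqrt{3}}=8-4\sqrt{3}$, collapses the left-hand side to a single ${_2}F_1$ squared, reduces the degenerate ${_4}F_3$ (via \eqref{b5}) and the surviving $\delta$-terms exactly as you describe, and then solves for the ${_3}F_2$ after reordering the upper parameters with Lemma \ref{g8}. The one ingredient you leave as an expectation --- the cubic-type evaluation of the ${_2}F_1$ at the arguments $\frac{4}{2\pm\sqrt{3}}$ --- is precisely what the paper imports as \eqref{g22} from \cite[Thm.\ 1.10]{TB}, the same external result already used in the proof of Theorem \ref{Value-46}, so your outline is complete modulo that citation.
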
	
We note that the above result was also proved by Kalita and the second author by counting points on certain algebraic curves over finite fields, 
see for example \cite[Thm 1.7]{BK}. Here we present a different proof using our product formulas.
\section{Notation and Preliminaries} We first recall some definitions and results from \cite{greene}. 
Let $\delta$ denote the function on multiplicative characters defined by
$$\delta(A)=\left\{
              \begin{array}{ll}
                1, & \hbox{if $A$ is the trivial character;} \\
                0, & \hbox{otherwise.}
              \end{array}
            \right.
$$
We also denote by $\delta$ the function defined on $\mathbb{F}_q$ by 
$$\delta(x)=\left\{
              \begin{array}{ll}
                1, & \hbox{if $x=0$;} \\
                0, & \hbox{if $x\neq 0$.}
              \end{array}
            \right.
$$
The binomial coefficient ${A\choose B}$ defined in \eqref{eq-0} satisfies many interesting properties. For example, we list the following from \cite{greene}:
\begin{align}\label{b5}
{A\choose \varepsilon}={A\choose A}=\frac{-1}{q}+\frac{q-1}{q}\delta(A);
\end{align}

\begin{align}\label{b7}
{\varepsilon\choose A}=-\frac{A(-1)}{q}+\frac{q-1}{q}\delta(A);
\end{align}
and
\begin{align}\label{b6}
    {A\choose B}{C\choose A}={C\choose B}{C\overline{B}\choose A\overline{B}}-\frac{q-1}{q^2}B(-1)\delta(A)
    +\frac{q-1}{q^2}AB(-1)\delta(B\overline{C}).
\end{align}
\par We next recall some properties of Gauss and Jacobi sums. For further details, see \cite{evans}. Let $\zeta_p$ be a fixed primitive $p$-th root of unity
in ${\mathbb{C}}$. The trace map $\text{tr}: \mathbb{F}_q \rightarrow \mathbb{F}_p$ is given by
\begin{align}
\text{tr}(\alpha)=\alpha + \alpha^p + \alpha^{p^2}+ \cdots + \alpha^{p^{r-1}}.\notag
\end{align}
Then the additive character
$\theta: \mathbb{F}_q \rightarrow \mathbb{C}$ is defined by
\begin{align}
\theta(\alpha)=\zeta_p^{\text{tr}(\alpha)}.\notag
\end{align}
For $\chi \in \widehat{\mathbb{F}_q^\times}$, the \emph{Gauss sum} is defined by
\begin{align}
g(\chi):=\sum\limits_{x\in \mathbb{F}_q}\chi(x)\theta(x).\notag
\end{align}
We let $T$ denote a fixed generator of $\widehat{\mathbb{F}_q^\times}$. 
\begin{lemma}\emph{(\cite[(1.12)]{greene})}\label{g1}
If $k\in\mathbb{Z}$, then
$$g(T^k)g(T^{-k})=qT^k(-1)-(q-1)\delta(T^k).$$
\end{lemma}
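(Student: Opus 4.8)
The plan is to prove the identity directly from the definition of the Gauss sum by expanding the product $g(T^k)g(T^{-k})$ as a double character sum and then separating the additive and multiplicative contributions. Writing $\chi=T^k$, so that $T^{-k}=\overline{\chi}$ and $\delta(T^k)=\delta(\chi)$, I would begin from
$$g(\chi)g(\overline{\chi})=\sum_{x\in\mathbb{F}_q^{\times}}\sum_{y\in\mathbb{F}_q^{\times}}\chi(x)\overline{\chi}(y)\theta(x+y),$$
where the convention $\chi(0)=0$ lets us restrict both sums to $\mathbb{F}_q^{\times}$.

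The key step is the substitution $x=uy$ with $u\in\mathbb{F}_q^{\times}$, which turns $\chi(x)\overline{\chi}(y)=\chi(x/y)=\chi(u)$ and $x+y=y(u+1)$. After reindexing, the product factors as
$$g(\chi)g(\overline{\chi})=\sum_{u\in\mathbb{F}_q^{\times}}\chi(u)\sum_{y\in\mathbb{F}_q^{\times}}\theta(y(u+1)).$$
The inner sum is evaluated by orthogonality of the additive character $\theta$: since $\sum_{z\in\mathbb{F}_q}\theta(z)=0$, the sum over $y\in\mathbb{F}_q^{\times}$ equals $q-1$ when $u=-1$ and equals $-1$ for all $u\neq -1$. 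Substituting these two values isolates the $u=-1$ term and leaves
$$g(\chi)g(\overline{\chi})=\chi(-1)q-\sum_{u\in\mathbb{F}_q^{\times}}\chi(u).$$
The remaining multiplicative sum is $q-1$ if $\chi$ is trivial and $0$ otherwise, that is, $\sum_{u\in\mathbb{F}_q^{\times}}\chi(u)=(q-1)\delta(\chi)$, which yields $g(\chi)g(\overline{\chi})=q\chi(-1)-(q-1)\delta(\chi)$. This is exactly the claim with $\chi=T^k$.

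I do not expect a genuine obstacle here; the only delicate points are bookkeeping. One must use the convention $\chi(0)=0$ consistently so that every sum really runs over $\mathbb{F}_q^{\times}$, and one must peel off the degenerate term $u=-1$ before invoking additive orthogonality, since that term is precisely where the main contribution $q\chi(-1)$ originates. As a consistency check, the trivial-character case $\chi=\varepsilon$ gives $g(\varepsilon)=-1$ directly, so $g(\varepsilon)^2=1=q-(q-1)$, in agreement with the formula.
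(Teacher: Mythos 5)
Your proof is correct: the substitution $x=uy$, the additive orthogonality evaluation of $\sum_{y\in\mathbb{F}_q^{\times}}\theta\bigl(y(u+1)\bigr)$, and the multiplicative orthogonality $\sum_{u\in\mathbb{F}_q^{\times}}\chi(u)=(q-1)\delta(\chi)$ assemble exactly into $g(\chi)g(\overline{\chi})=q\chi(-1)-(q-1)\delta(\chi)$, and your use of the convention $\chi(0)=0$ (so that $g(\varepsilon)=-1$) matches the paper's. The paper itself offers no proof, citing the identity as (1.12) of Greene, and your argument is precisely the standard one found in the cited literature, so there is nothing further to compare.
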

\begin{lemma}\emph{(\cite[(17)]{FL})}\label{g3} For $A\in\widehat{\mathbb{F}_q^{\times}}$ we have
\begin{align}
\frac{1}{g(\overline{A})}= \frac{A(-1)g(A)}{q}-\frac{(q-1)}{q}\delta(A).\notag
\end{align}
\end{lemma}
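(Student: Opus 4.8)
The plan is to prove the identity by separating the trivial and nontrivial cases for $A$, with Lemma \ref{g1} supplying the essential product relation in the nontrivial case. The starting observation is that for any multiplicative character $A$ one has $A(-1)^2 = A(1) = 1$, so $A(-1) \in \{1, -1\}$ and in particular $A(-1)$ is its own inverse; this is exactly what will convert a factor of $A(-1)^{-1}$ into $A(-1)$ at the end.

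First I would treat the case $A \neq \varepsilon$. Writing $A = T^k$ with $T^k \neq \varepsilon$, so that $\overline{A} = T^{-k}$ and $\delta(A) = 0$, Lemma \ref{g1} gives
\begin{align}
g(A)g(\overline{A}) = qA(-1) - (q-1)\delta(A) = qA(-1).\notag
\end{align}
Since $A(-1) \neq 0$, the right-hand side is nonzero, hence $g(\overline{A}) \neq 0$ and dividing yields
\begin{align}
\frac{1}{g(\overline{A})} = \frac{g(A)}{qA(-1)} = \frac{A(-1)g(A)}{q},\notag
\end{align}
where the final equality uses $A(-1)^{-1} = A(-1)$. As $\delta(A) = 0$ here, this is precisely the claimed formula.

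It then remains to check $A = \varepsilon$. In this case $g(\varepsilon) = \sum_{x \in \mathbb{F}_q}\varepsilon(x)\theta(x) = \sum_{x \in \mathbb{F}_q^{\times}}\theta(x) = -1$, using $\sum_{x \in \mathbb{F}_q}\theta(x) = 0$ together with $\theta(0) = 1$. Thus $\overline{A} = \varepsilon$ and the left-hand side equals $1/g(\varepsilon) = -1$. On the right-hand side, $\varepsilon(-1) = 1$, $g(\varepsilon) = -1$, and $\delta(\varepsilon) = 1$, so
\begin{align}
\frac{\varepsilon(-1)g(\varepsilon)}{q} - \frac{q-1}{q}\delta(\varepsilon) = \frac{-1}{q} - \frac{q-1}{q} = -1,\notag
\end{align}
matching the left-hand side and completing the verification.

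There is no serious obstacle here: the identity is a routine consequence of Lemma \ref{g1} and the elementary evaluation $g(\varepsilon) = -1$. The only points deserving care are the remark that $A(-1)$ is a sign and hence self-inverse, and the separate handling of the trivial character, for which the $\delta$-term is exactly the correction needed to reconcile $g(\varepsilon) = -1$ with the clean reciprocal relation that holds in the nontrivial case.
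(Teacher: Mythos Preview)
Your proof is correct. The paper does not supply its own proof of this lemma; it is stated with a citation to \cite[(17)]{FL}. Your argument---splitting into the cases $A\neq\varepsilon$ and $A=\varepsilon$, invoking Lemma~\ref{g1} in the former and the evaluation $g(\varepsilon)=-1$ in the latter---is the standard and natural derivation, and every step is valid.
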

\begin{lemma}\emph{(\cite[(1.14)]{greene})}\label{gj1} For $A, B\in\widehat{\mathbb{F}_q^{\times}}$ we have
\begin{align}
J(A, B)=\frac{g(A)g(B)}{g(AB)}+(q-1)B(-1)\delta(AB).\notag
\end{align}
\end{lemma}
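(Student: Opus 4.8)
The plan is to derive the identity directly from the definitions of the Gauss and Jacobi sums by evaluating the product $g(A)g(B)$. First I would expand
\[
g(A)g(B)=\sum_{u,v\in\mathbb{F}_q}A(u)B(v)\theta(u+v),
\]
and reparametrize the summation by setting $t=u+v$, so that for each fixed $t$ the inner sum over $u$ becomes $\sum_{u}A(u)B(t-u)$. The key observation is that this inner sum is essentially a Jacobi sum once $t\neq 0$.

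For $t\neq 0$, substituting $u=ts$ gives $t-u=t(1-s)$, and multiplicativity factors the inner sum as $(AB)(t)\sum_{s}A(s)B(1-s)=(AB)(t)J(A,B)$. Summing over $t\neq 0$ against the weight $\theta(t)$ then produces $J(A,B)\sum_{t\neq0}(AB)(t)\theta(t)=J(A,B)\,g(AB)$, since $(AB)(0)=0$. The remaining contribution comes from $t=0$, where $\theta(0)=1$ and the inner sum equals $\sum_{u}A(u)B(-u)=B(-1)\sum_{u\in\mathbb{F}_q^{\times}}(AB)(u)$; by the orthogonality of characters the latter sum is $q-1$ when $AB=\varepsilon$ and $0$ otherwise, that is, it equals $(q-1)\delta(AB)$. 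Combining the two contributions yields
\[
g(A)g(B)=(q-1)B(-1)\delta(AB)+g(AB)\,J(A,B).
\]

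Finally I would solve for $J(A,B)$ by dividing through by $g(AB)$, which is legitimate since every Gauss sum is nonzero (for $\chi\neq\varepsilon$ one has $|g(\chi)|=\sqrt{q}$, while $g(\varepsilon)=-1$). The hard part, and really the only delicate point, is the degenerate case $AB=\varepsilon$: here the boundary term does not vanish, and one must verify that transposing the term $-(q-1)B(-1)\delta(AB)$ and dividing by $g(\varepsilon)=-1$ reproduces exactly the stated summand $+(q-1)B(-1)\delta(AB)$ with the correct sign. Since $-1/g(\varepsilon)=1$, this bookkeeping works out automatically, and the rearranged identity is precisely $J(A,B)=\dfrac{g(A)g(B)}{g(AB)}+(q-1)B(-1)\delta(AB)$, as claimed.
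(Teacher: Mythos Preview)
Your argument is correct. The paper does not actually supply its own proof of this lemma: it is quoted directly from Greene \cite[(1.14)]{greene} as a standing fact, so there is no ``paper's proof'' to compare against. What you have written is precisely the standard derivation of the Gauss--Jacobi relation, and every step checks out, including the boundary bookkeeping at $AB=\varepsilon$ where $g(\varepsilon)=-1$ converts the subtracted term into the added $\delta$-correction.
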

Using Lemma \ref{gj1}, we can re-write the binomial coefficient in terms of Gauss sums as follows.
\begin{lemma}\emph{\label{g8}}
If $A, B \in \widehat{\mathbb{F}_q^{\times}}$ then we have
 \begin{align}
     {A \choose B}=\frac{B(-1)g(A)g(\overline{B})}{qg(A\overline{B})}+ \frac{q-1}{q}\delta(A\overline{B}). \notag
 \end{align}
\end{lemma}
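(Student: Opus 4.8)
The plan is to derive this identity directly from the defining formula \eqref{eq-0} for the binomial coefficient together with the Gauss-sum expression for the Jacobi sum supplied by Lemma \ref{gj1}. By \eqref{eq-0} we have ${A \choose B} = \frac{B(-1)}{q} J(A, \overline{B})$, so the entire task reduces to rewriting the single Jacobi sum $J(A, \overline{B})$ in terms of Gauss sums and then restoring the prefactor $\frac{B(-1)}{q}$.

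First I would apply Lemma \ref{gj1} with its second argument specialized to $\overline{B}$, which gives $J(A, \overline{B}) = \frac{g(A)g(\overline{B})}{g(A\overline{B})} + (q-1)\overline{B}(-1)\delta(A\overline{B})$. Substituting this into \eqref{eq-0} and distributing $\frac{B(-1)}{q}$ produces two terms: the first is $\frac{B(-1)g(A)g(\overline{B})}{qg(A\overline{B})}$, which already matches the leading term of the claim verbatim, and the second is $\frac{q-1}{q}B(-1)\overline{B}(-1)\delta(A\overline{B})$.

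The only point requiring any attention is the character factor $B(-1)\overline{B}(-1)$ appearing in the second term. Here I would observe that $B(-1)\overline{B}(-1) = (B\overline{B})(-1) = \varepsilon(-1) = 1$, using $B\overline{B} = \varepsilon$ together with $\varepsilon(-1) = 1$ (equivalently, $B(-1) \in \{\pm 1\}$ is its own inverse). With this simplification the second term collapses to $\frac{q-1}{q}\delta(A\overline{B})$, and summing the two contributions yields exactly the asserted formula. There is no substantive obstacle: the argument is a single substitution followed by the elementary identity $\varepsilon(-1)=1$. I would only remark that the delta term is precisely the correction accounting for the degenerate case $A = B$, where $A\overline{B} = \varepsilon$ and the Jacobi sum is not given by a clean Gauss-sum quotient; tracking it carefully through Lemma \ref{gj1} is what guarantees the stated formula holds for all pairs $A, B \in \widehat{\mathbb{F}_q^{\times}}$ without exception.
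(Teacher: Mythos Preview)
Your proof is correct and follows exactly the approach the paper indicates: the paper simply states that Lemma~\ref{g8} follows by using Lemma~\ref{gj1}, and you have carried out precisely that substitution of $J(A,\overline{B})$ into \eqref{eq-0} together with the observation $B(-1)\overline{B}(-1)=1$.
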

The orthogonality relation for multiplicative characters is given in the following lemma.
\begin{lemma}\emph{(\cite[(1.16)]{greene})}\label{g5}
If $x\in\mathbb{F}_q^{\times}$ then we have
\begin{align}
 \sum_{\chi\in \widehat{\mathbb{F}_q^{\times}}}\chi(x) = (q-1)\delta(1-x).\notag
\end{align}
\end{lemma}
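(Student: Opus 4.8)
The plan is to exploit the cyclic structure of the character group $\widehat{\mathbb{F}_q^{\times}}$. Since $\mathbb{F}_q^{\times}$ is cyclic of order $q-1$, its dual $\widehat{\mathbb{F}_q^{\times}}$ is also cyclic of order $q-1$, generated by the fixed character $T$ introduced above. Writing each $\chi\in\widehat{\mathbb{F}_q^{\times}}$ uniquely as $\chi=T^k$ with $0\le k\le q-2$, the sum in question becomes a geometric series in $T(x)$:
\[
\sum_{\chi\in\widehat{\mathbb{F}_q^{\times}}}\chi(x)=\sum_{k=0}^{q-2}T(x)^k.
\]

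First I would dispose of the case $x=1$. Here every character satisfies $\chi(1)=1$, so the sum equals $q-1$, which matches $(q-1)\delta(1-x)$ since $\delta(1-1)=\delta(0)=1$. For $x\neq 1$, where $\delta(1-x)=0$, I would evaluate the geometric series. Since $x\in\mathbb{F}_q^{\times}$ satisfies $x^{q-1}=1$, we have $T(x)^{q-1}=T(x^{q-1})=T(1)=1$, so the standard partial-sum formula gives
\[
\sum_{k=0}^{q-2}T(x)^k=\frac{T(x)^{q-1}-1}{T(x)-1}=0,
\]
provided $T(x)\neq 1$. The only point requiring care — and the sole real obstacle — is to confirm that $T(x)\neq 1$ whenever $x\neq 1$. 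This holds because $T$ has order exactly $q-1$ as a character, so its kernel is trivial; equivalently, $T$ restricts to a faithful (injective) homomorphism on the cyclic group $\mathbb{F}_q^{\times}$, whence $T(x)=1$ forces $x=1$.

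Alternatively, I could avoid singling out a generator by a character-shift argument: set $S:=\sum_{\chi}\chi(x)$, and for $x\neq 1$ choose any $\psi\in\widehat{\mathbb{F}_q^{\times}}$ with $\psi(x)\neq 1$, which exists because the characters separate the points of $\mathbb{F}_q^{\times}$. Reindexing the sum by the bijection $\chi\mapsto\psi\chi$ of $\widehat{\mathbb{F}_q^{\times}}$ yields
\[
\psi(x)\,S=\sum_{\chi}\psi(x)\chi(x)=\sum_{\chi}(\psi\chi)(x)=S,
\]
so $(\psi(x)-1)S=0$ and therefore $S=0$. Both routes are elementary and rely only on the cyclicity of $\widehat{\mathbb{F}_q^{\times}}$ together with the identity $x^{q-1}=1$. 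I would present the geometric-series version for concreteness, as it makes the role of the generator $T$ most transparent and produces the dichotomy $q-1$ versus $0$ directly.
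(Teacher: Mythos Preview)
Your proof is correct; both the geometric-series argument and the reindexing argument are standard and complete. Note, however, that the paper does not supply its own proof of this lemma: it is stated with a citation to Greene \cite[(1.16)]{greene} and used as a known preliminary, so there is no in-paper argument to compare against.
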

Another important product formula for Gauss sums is the Davenport-Hasse relation.
\begin{lemma}[Davenport-Hasse relation \cite{Lang}]\emph{\label{HD}} Let $\chi$ be a character of order $m$ on  $\mathbb{F}_q^{\times}$, for some positive integer $m$. For character $A$ on $\mathbb{F}_q^{\times}$ we have
	\begin{align}
		\prod_{i=0}^{m-1}g(\chi^{i}A) = g(A^m)\overline{A^{m}}(m)\prod_{i=1}^{m-1}g(\chi^{i}).\notag
	\end{align}
\end{lemma}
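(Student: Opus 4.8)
The plan is to treat this as the classical Hasse--Davenport product relation, the finite-field analogue of the Gauss multiplication formula $\prod_{k=0}^{m-1}\Gamma(z+\tfrac{k}{m}) = (2\pi)^{(m-1)/2}m^{1/2-mz}\Gamma(mz)$, in which $g(\cdot)$ plays the role of $\Gamma$ and the normalizing character $\overline{A^m}(m)$ plays the role of $m^{1/2-mz}$. First I would expand the left-hand side directly from the definition $g(\psi)=\sum_{x}\psi(x)\theta(x)$, writing
\[
\prod_{i=0}^{m-1}g(A\chi^i) = \sum_{x_0,\dots,x_{m-1}\in\mathbb{F}_q^\times} A\Big(\prod_j x_j\Big)\,\prod_j\chi^j(x_j)\,\theta\Big(\sum_j x_j\Big).
\]
I would then substitute $x_j = s\,t_j$ with $s=\sum_j x_j\in\mathbb{F}_q^\times$ and $\sum_j t_j = 1$, which separates the sum into an outer Gauss sum in $s$ and an inner generalized Jacobi sum over the hyperplane $\sum_j t_j=1$. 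Since $\prod_j(A\chi^j)=A^m\chi^{m(m-1)/2}$, the outer factor is exactly $g(A^m\chi^{m(m-1)/2})$, so the content of the theorem is concentrated in evaluating the inner multivariable Jacobi sum, where the precise normalizing character $\overline{A^m}(m)$ should emerge from the rescaling by the field element $m=1+\cdots+1$.

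The crucial structural input is that the characters $\{A\chi^i\}_{i=0}^{m-1}$ form a full coset of the order-$m$ subgroup $\langle\chi\rangle\subset\widehat{\mathbb{F}_q^\times}$; I would exploit this by folding the inner sum using the identity $\sum_{i=0}^{m-1}\chi^i(y)=m$ or $0$ according as $y$ is or is not an $m$-th power, so as to collapse the product structure onto the single character $A^m$ and recover $g(A^m)$ together with $\prod_{i=1}^{m-1}g(\chi^i)$. To pin down the remaining ambiguity I would use that $|g(\psi)|=\sqrt q$ for $\psi\neq\varepsilon$ (equivalently Lemma \ref{g1}), which shows both sides have equal absolute value and reduces the identity to an equality of roots of unity; this root of unity can then be fixed by specialization---e.g. at $A=\varepsilon$ both sides reduce to $g(\varepsilon)\prod_{i=1}^{m-1}g(\chi^i)=-\prod_{i=1}^{m-1}g(\chi^i)$---combined with the invariance of both sides under $A\mapsto A\chi$, which follows at once since $\chi^m=\varepsilon$.

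The main obstacle is twofold. First, the parity of $m$ intervenes: for even $m$ one has $\chi^{m(m-1)/2}=\chi^{m/2}\neq\varepsilon$, so the outer Gauss sum is $g(A^m\chi^{m/2})$ rather than $g(A^m)$, and the discrepancy must be absorbed by a compensating factor hidden in the inner Jacobi sum; this bookkeeping, already nontrivial in the base case $m=2$ where the statement reduces to $g(A)g(A\varphi)=\overline{A^2}(2)g(\varphi)g(A^2)$, has to be carried out with care. Second, and more fundamentally, establishing that the leftover root of unity is exactly $1$ is where the real content lies; a clean way to finish is to invoke the Stickelberger congruence to locate $\prod_i g(A\chi^i)$ and $g(A^m)$ in the same cyclotomic field and compare their prime factorizations and Galois conjugates, but since the excerpt develops only elementary Gauss- and Jacobi-sum identities I would instead push the direct combinatorial evaluation of the generalized Jacobi sum to completion. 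Finally I would treat the degenerate parameter ranges---$A=\varepsilon$, $A^m=\varepsilon$, and the values where intermediate characters become trivial---separately using Lemma \ref{g1} and Lemma \ref{gj1}, since there the normalization $|g|=\sqrt q$ fails and the $\delta$-terms appearing in those lemmas must be tracked explicitly.
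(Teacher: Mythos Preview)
The paper does not prove this lemma at all: it is stated with attribution to Lang's \emph{Cyclotomic Fields} and used as a black box, with only the specializations $m=2,3,4$ recorded separately (Lemmas~\ref{g10}, \ref{HD1}, \ref{HD2}). So there is no ``paper's own proof'' to compare against; the intended response here is simply to cite the reference.

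That said, your sketch is a reasonable outline of one standard approach to the Hasse--Davenport product relation, and you correctly locate the essential difficulty: after the change of variables and the absolute-value reduction, what remains is to pin down a single root of unity, and this is exactly where the substance lies. However, your proposal does not actually close this gap. The specialization at $A=\varepsilon$ together with invariance under $A\mapsto A\chi$ only determines the identity on the coset $\langle\chi\rangle$, not for all $A$; and ``push the direct combinatorial evaluation of the generalized Jacobi sum to completion'' is a promise, not a proof. The clean arguments in the literature either go through Stickelberger-type congruences (as you mention) or through a lifting/descent argument relating Gauss sums over $\mathbb{F}_q$ and $\mathbb{F}_{q^m}$, neither of which is developed in your outline. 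For the purposes of this paper a citation suffices; if you want a self-contained proof, you should either carry out the $p$-adic argument in full or follow the treatment in Lang or in Berndt--Evans--Williams.
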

We use Davenport-Hasse relation for $m=2, 3, 4$. When $ m= 2$, we have the following identity.
\begin{lemma}\emph{\label{g10}} For $A\in \widehat{\mathbb{F}_q^{\times}}$, we have
 \begin{align}
  g(A)g(\varphi A)=g(A^2)g(\varphi)\overline{A}(4).\notag
 \end{align}
\end{lemma}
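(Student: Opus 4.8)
The plan is to obtain Lemma \ref{g10} as the $m=2$ instance of the Davenport--Hasse relation (Lemma \ref{HD}). Since $p$ is an odd prime and $q=p^r$, the order $q-1$ of the cyclic group $\widehat{\mathbb{F}_q^{\times}}$ is even, so the quadratic character $\varphi$ exists and has order exactly $m=2$. Lemma \ref{HD} therefore applies with the choice $\chi=\varphi$, and no restriction on $A$ is needed.

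First I would substitute $m=2$ and $\chi=\varphi$ into Lemma \ref{HD}. On the left, the product unfolds to $\prod_{i=0}^{1}g(\varphi^{i}A)=g(\varphi^{0}A)\,g(\varphi^{1}A)=g(A)\,g(\varphi A)$, using that $\varphi^{0}=\varepsilon$ is trivial. On the right, $g(A^{m})=g(A^{2})$, the one-term product $\prod_{i=1}^{1}g(\varphi^{i})$ equals $g(\varphi)$, and the power-residue prefactor $\overline{A^{m}}(m)$ becomes $\overline{A^{2}}(2)$. This already gives $g(A)\,g(\varphi A)=g(A^{2})\,\overline{A^{2}}(2)\,g(\varphi)$.

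The only remaining step is to simplify $\overline{A^{2}}(2)$ to match the stated form $\overline{A}(4)$. Since every multiplicative character is a homomorphism on $\mathbb{F}_q^{\times}$, I would write $\overline{A^{2}}(2)=\overline{A}(2)^{2}=\overline{A}(2^{2})=\overline{A}(4)$. Collecting the factors yields $g(A)\,g(\varphi A)=g(A^{2})\,g(\varphi)\,\overline{A}(4)$, which is precisely the claimed identity.

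I do not expect any genuine obstacle: the lemma is a direct specialization of Lemma \ref{HD}, and the single point warranting a word of justification is the elementary character identity $\overline{A^{2}}(2)=\overline{A}(4)$. For completeness one could instead give a self-contained argument by expanding $g(A)g(\varphi A)$ as a double sum over $\mathbb{F}_q^{\times}\times\mathbb{F}_q^{\times}$ and reorganizing via the substitution exploiting the $2$-to-$1$ squaring map, but invoking Lemma \ref{HD} is the cleanest route and is the approach I would present.
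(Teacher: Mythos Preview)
Your proposal is correct and matches the paper's approach exactly: the paper states Lemma \ref{g10} immediately after the Davenport--Hasse relation with the remark that it is the $m=2$ case, and your substitution $m=2$, $\chi=\varphi$ together with the simplification $\overline{A^{2}}(2)=\overline{A}(4)$ is precisely what is intended.
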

For $m=3$, we have the following lemma.
\begin{lemma}\emph{\label{HD1}} Let $\chi_3$ be character of order $3$. Then for $A\in \widehat{\mathbb{F}_q^{\times}}$, we have
	\begin{align}
		g(A)g(\chi_{3}A)g(\chi_{3}^{2}A)=g(A^3)g(\chi_3)g(\chi_3^{2})\overline{A}(27).\notag
	\end{align}
\end{lemma}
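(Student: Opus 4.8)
The plan is to derive this $m=3$ case directly from the general Davenport--Hasse relation of Lemma \ref{HD}, which has already been established, rather than proving it from scratch. First I would apply Lemma \ref{HD} with $m=3$ and with the character $\chi$ taken to be the fixed cubic character $\chi_3$ of order $3$. The left-hand side of the general relation, $\prod_{i=0}^{m-1} g(\chi^i A)$, then becomes
\begin{align}
\prod_{i=0}^{2} g(\chi_3^{\,i} A) = g(A)\,g(\chi_3 A)\,g(\chi_3^2 A),\notag
\end{align}
which is exactly the product appearing on the left of the claimed identity. This step is purely a matter of writing out the product for the specific value $m=3$.

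Next I would simplify the right-hand side of Lemma \ref{HD}, namely $g(A^m)\,\overline{A^m}(m)\prod_{i=1}^{m-1} g(\chi^i)$, under the substitution $m=3$ and $\chi=\chi_3$. The factor $g(A^m)$ becomes $g(A^3)$, the remaining Gauss-sum product $\prod_{i=1}^{2} g(\chi_3^{\,i})$ becomes $g(\chi_3)\,g(\chi_3^2)$, and the character evaluation $\overline{A^m}(m)$ becomes $\overline{A^3}(3)$. Combining these three pieces yields
\begin{align}
g(A^3)\,\overline{A^3}(3)\,g(\chi_3)\,g(\chi_3^2).\notag
\end{align}

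The only genuine point requiring attention is reconciling the character value: the lemma as stated writes the factor as $\overline{A}(27)$, whereas the direct specialization produces $\overline{A^3}(3)$. I would resolve this by noting that $\overline{A^3}(3)=\overline{A}(3)^3=\overline{A}(3^3)=\overline{A}(27)$, using that $\overline{A^3}=\overline{A}^{\,3}$ and that multiplicative characters are completely multiplicative, so $\overline{A}(3)^3 = \overline{A}(3\cdot 3\cdot 3)$. This identification of $\overline{A^3}(3)$ with $\overline{A}(27)$ is the sole step that is not completely automatic, and it is the place where one must be careful; everything else is a direct substitution of $m=3$ into the already-proven Lemma \ref{HD}. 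Substituting this simplification back gives the asserted formula $g(A)g(\chi_3 A)g(\chi_3^2 A)=g(A^3)g(\chi_3)g(\chi_3^2)\overline{A}(27)$, completing the proof.
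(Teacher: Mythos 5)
Your proposal is correct and matches the paper's intended argument: the paper presents Lemma \ref{HD1} as an immediate specialization of the Davenport--Hasse relation (Lemma \ref{HD}) with $m=3$ and $\chi=\chi_3$, exactly as you do. Your reconciliation $\overline{A^3}(3)=\overline{A}(3)^3=\overline{A}(27)$ is the right (and only) nontrivial observation needed.
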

For $m=4$, we have the following lemma.
\begin{lemma}\emph{\label{HD2}} Let $\chi_4$ be character of order $4$. Then for $A\in \widehat{\mathbb{F}_q^{\times}}$, we have
	\begin{align}
		g(A)g(\chi_{4}A)g(\varphi A)g(\chi_{4}^{3}A)=g(A^4)g(\chi_4)g(\varphi)g(\chi_4^{3})\overline{A}(256).\notag
	\end{align}
\end{lemma}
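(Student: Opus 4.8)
The plan is to obtain this identity as the special case $m=4$ of the general Davenport--Hasse relation already recorded in Lemma \ref{HD}, applied with $\chi=\chi_4$ (which exists precisely because $q\equiv 1\pmod 4$, so that a character of order $4$ is available). No new summation or character-sum computation is needed; the entire argument is a substitution followed by two elementary simplifications.

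First I would specialize Lemma \ref{HD} with $m=4$ and $\chi=\chi_4$, giving
\begin{align}
\prod_{i=0}^{3}g(\chi_4^{i}A)=g(A^4)\,\overline{A^{4}}(4)\prod_{i=1}^{3}g(\chi_4^{i}).\notag
\end{align}
The key structural observation is that $\chi_4$ has order $4$, so its square $\chi_4^2$ is the unique character of order $2$ on $\mathbb{F}_q^{\times}$, namely the quadratic character $\varphi$. Hence the $i=2$ factor on the left is $g(\chi_4^2A)=g(\varphi A)$, and the left-hand product expands as $g(A)g(\chi_4A)g(\varphi A)g(\chi_4^3A)$, which is exactly the left-hand side of Lemma \ref{HD2}. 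The same identification turns $\prod_{i=1}^{3}g(\chi_4^i)$ into $g(\chi_4)g(\varphi)g(\chi_4^3)$, matching the Gauss-sum factors on the right-hand side of the claim.

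It remains only to reconcile the constant $\overline{A^{4}}(4)$ appearing above with the factor $\overline{A}(256)$ in the statement. Since $\overline{A^4}=(\overline{A})^4$ as characters and $256=4^4$ as an integer identity (hence also in $\mathbb{F}_q$ after reduction modulo $p$), multiplicativity of $\overline{A}$ gives $\overline{A^{4}}(4)=\bigl(\overline{A}(4)\bigr)^{4}=\overline{A}(4^4)=\overline{A}(256)$. Substituting this into the specialized relation produces exactly the asserted formula. The only points requiring any care are the identification $\chi_4^2=\varphi$ and this last bookkeeping step; both are routine, so there is no substantive obstacle beyond invoking Lemma \ref{HD} correctly.
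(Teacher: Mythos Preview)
Your proof is correct and is exactly the approach the paper takes: Lemma \ref{HD2} is presented in the paper without proof, immediately after Lemma \ref{HD}, as the $m=4$ specialization of the Davenport--Hasse relation. The identifications $\chi_4^2=\varphi$ and $\overline{A^4}(4)=\overline{A}(256)$ that you spell out are precisely the routine simplifications needed.
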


\begin{lemma}\emph{(\cite[(1.8)]{greene}, \cite[Thm 2.2]{mccarthy3}).}\label{g2}
For $A,B,C,D\in\widehat{\mathbb{F}_q^{\times}}$ we have
 \begin{align}
  &\frac{1}{q-1}\sum_{\chi\in\widehat{\mathbb{F}_q^{\times}}}g(A\chi)g(B\chi)g(C\overline{\chi})g(D\overline{\chi})\notag\\
  &\hspace{1cm}= \frac{g(AC)g(AD)g(BC)g(BD)}{g(ABCD)} + q(q-1)AB(-1)\delta(ABCD).\notag
 \end{align}
\end{lemma}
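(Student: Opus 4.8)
The plan is to collapse the convolution over $\chi$ to a self-dual multiplicative character sum by unfolding the Gauss sums and applying orthogonality, and then to decouple and re-fold the result into the product of Gauss sums on the right-hand side.

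First I would substitute the definition $g(\psi)=\sum_{x\in\mathbb{F}_q}\psi(x)\theta(x)$ into each of the four factors, so that
\[
g(A\chi)g(B\chi)g(C\overline{\chi})g(D\overline{\chi})=\sum_{u,v,s,t\in\mathbb{F}_q^\times}A(u)B(v)C(s)D(t)\,\theta(u+v+s+t)\,\chi(uv)\overline{\chi}(st).
\]
Summing over all $\chi$ and applying Lemma~\ref{g5} to $\sum_\chi\chi(uv(st)^{-1})$ forces $uv=st$ and produces the factor $q-1$. Dividing by $q-1$ gives
\[
\frac{1}{q-1}\sum_{\chi}g(A\chi)g(B\chi)g(C\overline{\chi})g(D\overline{\chi})=\sum_{\substack{u,v,s,t\in\mathbb{F}_q^\times\\ uv=st}}A(u)B(v)C(s)D(t)\,\theta(u+v+s+t).
\]

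Next I would parametrize the solutions of $uv=st$ for fixed nonzero $u,v$ by writing $s=ur^{-1}$ and $t=vr$ with $r\in\mathbb{F}_q^\times$. This separates the triple sum into
\[
\sum_{r\in\mathbb{F}_q^\times}\overline{C}(r)D(r)\Bigl[\sum_{u}(AC)(u)\theta\bigl(u(1+r^{-1})\bigr)\Bigr]\Bigl[\sum_{v}(BD)(v)\theta\bigl(v(1+r)\bigr)\Bigr].
\]
Each inner sum equals a Gauss sum times a character value when its additive argument is nonzero, i.e. when $r\neq-1$, and equals $(q-1)\delta(\cdot)$ when $r=-1$. For $r\neq-1$ the inner sums give $\overline{AC}(1+r)\,(AC)(r)\,g(AC)$ and $\overline{BD}(1+r)\,g(BD)$; collecting the powers of $r$ and of $1+r$ turns the $r\neq-1$ part into $g(AC)g(BD)\sum_{r}(AD)(r)\,\overline{ABCD}(1+r)$, and the change of variable $r\mapsto -r$ identifies this last sum with $AD(-1)\,J(AD,\overline{ABCD})$ (the omitted points $r=0,-1$ contribute nothing, since $\psi(0)=0$ for every character).

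It then remains to refold using Lemmas~\ref{g1} and~\ref{gj1}. Expanding $J(AD,\overline{ABCD})$ by Lemma~\ref{gj1} and noting $AD\cdot\overline{ABCD}=\overline{BC}$, I would rewrite $g(\overline{ABCD})/g(\overline{BC})$ as $AD(-1)\,g(BC)/g(ABCD)$ by applying $g(\psi)g(\overline\psi)=q\psi(-1)-(q-1)\delta(\psi)$ from Lemma~\ref{g1} to both $g(\overline{ABCD})$ and $g(\overline{BC})$; the sign factors $AD(-1)^2$ cancel and the main term $g(AC)g(AD)g(BC)g(BD)/g(ABCD)$ falls out. The delicate part, and the main obstacle, is the degenerate stratum $ABCD=\varepsilon$, where $g(ABCD)=g(\varepsilon)=-1$, where the Jacobi sum collapses to $J(AD,\varepsilon)=(q-1)\delta(AD)-1$, and where the $r=-1$ term together with the subsidiary deltas $\delta(AC)$, $\delta(BC)$, $\delta(BD)$, $\delta(AD)$ can switch on. I expect the bulk of the work to be verifying that, across all these sub-cases, the surviving contributions reassemble into precisely the single correction $q(q-1)AB(-1)\delta(ABCD)$ with the stated sign, for which Lemma~\ref{g1} again supplies the key evaluations $g(AC)g(\overline{AC})$ and $g(BC)g(\overline{BC})$.
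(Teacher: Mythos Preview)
The paper does not supply its own proof of this lemma: it is quoted as a known identity from Greene \cite[(1.8)]{greene} and McCarthy \cite[Thm 2.2]{mccarthy3}, so there is nothing to compare against directly.

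Your outline is the standard direct argument and it is sound. Expanding the four Gauss sums, summing over $\chi$, and parametrizing the constraint $uv=st$ by $(s,t)=(ur^{-1},vr)$ gives
\[
S=\sum_{r\neq -1}g(AC)g(BD)\,(AD)(r)\,\overline{ABCD}(1+r)\;+\;\text{(the $r=-1$ term)},
\]
and the main sum is indeed $AD(-1)\,g(AC)g(BD)\,J(AD,\overline{ABCD})$. Converting $J(AD,\overline{ABCD})$ via Lemma~\ref{gj1} and then passing from $g(\overline{ABCD})/g(\overline{BC})$ to $g(BC)/g(ABCD)$ via Lemma~\ref{g1} gives the displayed main term whenever $ABCD\neq\varepsilon$ and $BC\neq\varepsilon$; the side-cases $BC=\varepsilon$ (with $ABCD\neq\varepsilon$) collapse immediately once you note $g(\varepsilon)=-1$ on both sides.

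The only genuine work, as you anticipated, is the stratum $ABCD=\varepsilon$. There $AD=\overline{BC}$ and $BD=\overline{AC}$, so both sides reduce to expressions in $g(AC)g(\overline{AC})$ and $g(BC)g(\overline{BC})$, and a short bookkeeping with Lemma~\ref{g1} shows that your three contributions (the main term, the $\delta(BC)$-term from the Jacobi expansion, and the $r=-1$ term $(q-1)^2\,CD(-1)\,\delta(AC)\delta(BD)$) add up exactly to
\[
-g(AC)g(\overline{AC})g(BC)g(\overline{BC})+q(q-1)\,AB(-1),
\]
which is the right-hand side with $g(ABCD)=g(\varepsilon)=-1$. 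So the ``main obstacle'' you flagged does resolve cleanly, and no further ideas are needed.
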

\begin{lemma}\cite[Thm 8.11]{FL}\label{33}
 Let $A\in\widehat{\mathbb{F}_q^\times}$ be such that $A\neq\varepsilon,\varphi$. For $x\in \mathbb{F}_q^{\times}$, we have
 \begin{align}\label{new-101}
 &{_{2}}\mathbb{F}_1\left[\begin{array}{cccc}
            A, & A\varphi \\
               & \varphi
              \end{array}\mid x \right]=\left(\frac{1+\varphi(x)}{2}\right)\left(\overline{A^2}(1+\sqrt{x})+\overline{A^2}(1-\sqrt{x})\right).
\end{align}
\end{lemma}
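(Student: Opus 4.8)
The identity is the finite field analogue of the classical evaluation ${_2}F_1(a, a+\tfrac12; \tfrac12; z) = \tfrac12[(1+\sqrt z)^{-2a} + (1-\sqrt z)^{-2a}]$, under the dictionary $A\leftrightarrow a$, $A\varphi\leftrightarrow a+\tfrac12$, $\varphi\leftrightarrow \tfrac12$, and $\overline{A^2}\leftrightarrow$ the exponent $-2a$; the prefactor $\tfrac12(1+\varphi(x))$ plays the role of restricting to those $z$ that are squares so that $\sqrt z$ exists. Classically the formula drops out of the binomial series together with the duplication relation $(2a)_{2m} = 4^m(a)_m(a+\tfrac12)_m$, and over $\mathbb{F}_q$ the role of duplication is played by the Davenport--Hasse relation for $m=2$ (Lemma \ref{g10}); this is what will drive the proof. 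The overall plan is to reduce the Fuselier et al. series to Greene's series, view both sides as functions on $\mathbb{F}_q^{\times}$, and match their expansions in the characters $\chi(x)$.

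Since $x\in\mathbb{F}_q^{\times}$, the $\delta(x)$ term in \eqref{relation-Fu-Greene} vanishes, and with $B=A\varphi$, $C=\varphi$ one computes $BC(-1)=A(-1)$ and $\overline{B}C=\overline{A}$, so
\[
{_{2}}\mathbb{F}_1\left[\begin{array}{cc} A, & A\varphi \\ & \varphi\end{array}\mid x\right] = \frac{qA(-1)}{J(A\varphi,\overline{A})}\,{_{2}}F_1\left(\begin{array}{cc} A, & A\varphi \\ & \varphi\end{array}\mid x\right).
\]
By \eqref{Greene-def-4} the right-hand side is $\frac{qA(-1)}{J(A\varphi,\overline{A})}\cdot\frac{q}{q-1}\sum_\chi {A\chi \choose \chi}{A\varphi\chi \choose \varphi\chi}\chi(x)$, a character sum in $x$. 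For the claimed expression $R(x)$ I would compute its Fourier coefficient $\widehat R_\chi=\frac{1}{q-1}\sum_{x\neq 0}R(x)\overline{\chi}(x)$ directly: since $R$ is supported on nonzero squares, writing $x=w^2$ (a $2$-to-$1$ substitution), using $\overline{\chi}(w^2)=\overline{\chi^2}(w)$ and the $w\mapsto -w$ symmetry together with $\chi^2(-1)=1$, the sum collapses to $\widehat R_\chi=\tfrac1{q-1}J(\overline{\chi^2},\overline{A^2})=\tfrac{q}{q-1}{\overline{A^2} \choose \chi^2}$ by \eqref{eq-0}. (Equivalently one may expand $\overline{A^2}(1\pm\sqrt x)$ by Greene's binomial theorem, the $\chi\mapsto\chi\varphi$ symmetry killing the non-square part.) Because the characters $\chi(x)$ are linearly independent on $\mathbb{F}_q^{\times}$, the whole theorem reduces to the single identity
\[
\frac{qA(-1)}{J(A\varphi,\overline{A})}{A\chi \choose \chi}{A\varphi\chi \choose \varphi\chi} = {\overline{A^2} \choose \chi^2}\qquad\text{for all }\chi.
\]

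To establish this I would rewrite every binomial coefficient through Gauss sums via Lemma \ref{g8}. Here the hypothesis $A\neq\varepsilon$ is decisive: since $A\chi\cdot\overline{\chi}=A$ and $A\varphi\chi\cdot\overline{\varphi\chi}=A$ are nontrivial, the coefficients ${A\chi \choose \chi}$ and ${A\varphi\chi \choose \varphi\chi}$ carry no $\delta$-correction for any $\chi$. Applying Lemma \ref{g10} to the pairs $g(A\chi)g(A\varphi\chi)$ and $g(\overline{\chi})g(\varphi\overline{\chi})$ produces $g(A^2\chi^2)$ and $g(\overline{\chi^2})$ respectively, the powers of $4$ cancel, and $g(\varphi)^2=q\varphi(-1)$ (from Lemma \ref{g1}) absorbs the sign; after expanding $J(A\varphi,\overline{A})$ by Lemma \ref{gj1} and using $g(B)g(\overline{B})=qB(-1)$ from Lemma \ref{g1}, the left side collapses to $g(A^2\chi^2)g(\overline{\chi^2})/\bigl(q\,g(A^2)\bigr)$ for every $\chi$. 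The same Gauss-sum reductions turn ${\overline{A^2} \choose \chi^2}$ into the identical expression whenever $A^2\chi^2\neq\varepsilon$, using $A\neq\varphi$ to guarantee $A^2\neq\varepsilon$ throughout.

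The main obstacle, and the only genuinely delicate point, is the degenerate characters with $A^2\chi^2=\varepsilon$, that is $\chi\in\{\overline{A},\overline{A}\varphi\}$, where the Gauss-sum form of ${\overline{A^2} \choose \chi^2}$ acquires a $\delta$-term. For these two characters I would evaluate both sides by hand: the already-simplified left side $g(A^2\chi^2)g(\overline{\chi^2})/\bigl(q\,g(A^2)\bigr)$ equals $-1/q$, while ${\overline{A^2} \choose \chi^2}={\overline{A^2} \choose \overline{A^2}}=-1/q$ by \eqref{b5} since $A^2\neq\varepsilon$. Thus the character identity holds for every $\chi$, which completes the matching of Fourier coefficients and hence proves the stated evaluation.
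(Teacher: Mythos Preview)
Your argument is correct. The paper itself does not supply a proof of this lemma: it is quoted verbatim from \cite[Thm 8.11]{FL} and used as a black box in the proof of Theorem~\ref{Value-41}. So there is no ``paper's own proof'' to compare against; what you have produced is an independent, self-contained derivation using only the Gauss- and Jacobi-sum machinery already collected in Section~2.

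The structure is exactly right. After passing through \eqref{relation-Fu-Greene}, both sides are functions on $\mathbb{F}_q^{\times}$, and orthogonality of characters reduces the claim to the pointwise identity
\[
\frac{qA(-1)}{J(A\varphi,\overline{A})}\,{A\chi\choose\chi}{A\varphi\chi\choose\varphi\chi}\;=\;{\overline{A^2}\choose\chi^2}\qquad(\chi\in\widehat{\mathbb{F}_q^{\times}}).
\]
Your Gauss-sum computation of the left side via Lemma~\ref{g8}, two applications of Lemma~\ref{g10}, $g(\varphi)^2=q\varphi(-1)$, and $g(A)g(\overline{A})=qA(-1)$ is clean and gives $g(A^2\chi^2)g(\overline{\chi^2})/(q\,g(A^2))$ in every case; the hypothesis $A\neq\varepsilon$ is precisely what kills all $\delta$-terms on that side. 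On the right, your case split at $\chi\in\{\overline{A},\overline{A}\varphi\}$ is the correct way to handle the residual $\delta(A^2\chi^2)$, and the hypothesis $A\neq\varphi$ enters exactly where you use $A^2\neq\varepsilon$ to invoke \eqref{b5}. The Fourier computation on the $R(x)$ side is also fine: the $2$-to-$1$ substitution $x=w^2$, the symmetry $w\mapsto -w$ with $\chi^2(-1)=1$, and the identification with $J(\overline{\chi^2},\overline{A^2})=q\,{\overline{A^2}\choose\chi^2}$ all check out.
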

We remark that the formula \eqref{new-101} is well-defined and the value of the hypergeometric series will be equal to $0$ if $x$ is not a square.  
We now recall three transformation formulas of Greene.
\begin{theorem}\cite[Thm. 4.4 (i), (ii), and (iii)]{greene}\label{thm7}
For $A, B, C \in \widehat{\mathbb{F}_q^{\times}}$ and $x\in \mathbb{F}_q$, 
\begin{align}
(i)~~{_{2}}F_1\left(\begin{array}{cccc}
                A, & B\\
                 & C
              \end{array}\mid x \right)&=A(-1){_{2}}F_1\left(\begin{array}{cccc}
                A, & B\\
                 & AB\overline{C}
              \end{array}\mid 1-x \right)\notag\\
              &\hspace{.5cm}+A(-1)\displaystyle{B \choose \overline{A}C}
              \delta(1-x)-\displaystyle{B \choose C}\delta(x),\notag\\
(ii)~~{_{2}}F_1\left(\begin{array}{cccc}
                A, & B\\
                 & C
              \end{array}\mid x \right)&=C(-1)\overline{A}(1-x){_{2}}F_1\left(\begin{array}{cccc}
                A, & C\overline{B}\\
                 & C
              \end{array}\mid \frac{x}{x-1} \right)\notag\\
              &\hspace{.5cm}+A(-1)\displaystyle{B \choose \overline{A}C}\delta(1-x),\notag\\
(iii)~~{_{2}}F_1 \left(\begin{array}{ccc}
                       A, & B \\
                          & C 
                      \end{array}\mid x\right)&= \overline{B}(1-x){_{2}}F_1 \left(\begin{array}{ccc}
                                                                                   C\overline{A}, & B \\
                                                                                      & C
                                                                                  \end{array}\mid \frac{x}{x-1}\right)\notag\\
                                                                                  &\hspace{1cm}+ A(-1){B \choose \overline{A}C}\delta(1-x).\notag
\end{align}
\end{theorem}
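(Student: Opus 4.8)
The plan is to derive all three identities of Theorem~\ref{thm7} from Greene's Euler-type representation of his ${}_2F_1$, the character-sum analogue of Euler's integral $\int_0^1 t^{b-1}(1-t)^{c-b-1}(1-xt)^{-a}\,dt$ and an equivalent form of the defining sum \eqref{Greene-def-4}:
\begin{align}
{_{2}}F_1\left(\begin{array}{cc} A, & B\\ & C \end{array}\mid x \right)=\varepsilon(x)\,\frac{BC(-1)}{q}\sum_{y\in\mathbb{F}_q}B(y)\,\overline{B}C(1-y)\,\overline{A}(1-xy).\notag
\end{align}
Each of (i)--(iii) is the finite-field shadow of a M\"obius change of variable in this integral, and the strategy is uniform: perform the relevant substitution $y\mapsto\phi(y)$ on the summation index, use multiplicativity of the characters to split each factor and so extract the appropriate power of $1-x$, and then recognise the transformed kernel as another ${}_2F_1$. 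The prefactors reconcile using only the elementary facts $A(-1)^2=B(-1)^2=C(-1)^2=1$ and $\overline{\chi}(-1)=\chi(-1)$.

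For (ii) I would substitute $y\mapsto 1-y$. Since $1-x(1-y)=(1-x)\bigl(1-\tfrac{x}{x-1}y\bigr)$, multiplicativity of $\overline{A}$ extracts $\overline{A}(1-x)$ and leaves the sum $\sum_y C\overline{B}(y)\,B(1-y)\,\overline{A}\bigl(1-\tfrac{x}{x-1}y\bigr)$, which is precisely the kernel of ${}_2F_1\bigl(A,C\overline{B};C;\tfrac{x}{x-1}\bigr)$. Comparing prefactors gives $\frac{BC(-1)}{C^2\overline{B}(-1)}=C(-1)$, so the surviving main term is $C(-1)\,\overline{A}(1-x)\,{}_2F_1\bigl(A,C\overline{B};C;\tfrac{x}{x-1}\bigr)$, as in (ii).

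For (iii) I would use the M\"obius substitution $y\mapsto\frac{y}{1-x+xy}$, under which $1-y$, $1-xy$ and the linear denominator $1-x+xy$ all factor through $1-x$ and $1-\frac{x}{x-1}y$; the characters regroup so that the $A$-factor migrates out of the $(1-xy)$ slot into the upper parameter, leaving the main term $\overline{B}(1-x)\,{}_2F_1\bigl(C\overline{A},B;C;\frac{x}{x-1}\bigr)$. For (i) the correct substitution is the Pfaff involution on the index, $y\mapsto\frac{y}{y-1}$, for which $1-y=\frac{-1}{y-1}$ and $1-xy=\frac{-\bigl(1-(1-x)y\bigr)}{y-1}$; regrouping the characters sends the lower parameter $C\mapsto AB\overline{C}$ and the argument $x\mapsto 1-x$, the surviving constant collapses to $A(-1)$, and the main term is $A(-1)\,{}_2F_1\bigl(A,B;AB\overline{C};1-x\bigr)$.

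The main obstacle is not the substitutions themselves but the bookkeeping of the degenerate correction terms. Each substitution is a bijection of $\mathbb{F}_q$ only away from finitely many points, and the step $\chi(ab)=\chi(a)\chi(b)$ is valid only when both arguments are nonzero; both features break down exactly at the boundary values $x\in\{0,1\}$ and at the index points where $y\in\{0,1\}$ or $1-xy=0$. Consequently the substitutions establish (i)--(iii) in the stated form only for $x\neq 0,1$, and the $\delta$-supported terms are precisely the corrections that make each identity valid also at the excluded arguments. To pin their coefficients down I would evaluate both sides directly at $x=0$ and $x=1$ using the Jacobi-sum form of the binomial (Lemma~\ref{gj1} and Lemma~\ref{g8}), the orthogonality relation (Lemma~\ref{g5}), and the special values \eqref{b5}, \eqref{b7}: the reflection in (i) activates both $x=0$ and $x=1$, producing $-{B\choose C}\delta(x)$ and $A(-1){B\choose\overline{A}C}\delta(1-x)$, whereas the substitutions in (ii) and (iii) activate only $x=1$, producing $A(-1){B\choose\overline{A}C}\delta(1-x)$. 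This boundary analysis is where the real care is needed; everything else is routine character arithmetic.
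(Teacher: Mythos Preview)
The paper does not give its own proof of Theorem~\ref{thm7}: the result is quoted verbatim from Greene~\cite[Thm.~4.4]{greene} in the preliminaries section and used as a black box throughout, so there is no proof here to compare your proposal against.

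That said, your approach---working from the character-sum ``Euler integral'' representation and performing M\"obius substitutions on the summation index---is exactly the method Greene uses in the cited reference, and the outline you give is sound. The substitution $y\mapsto 1-y$ for (ii) is correct and the constants reconcile as you indicate. For (i), your involution $y\mapsto y/(y-1)$ works: one checks that the powers of $y-1$ coming from the three factors combine to $A\overline{C}(y-1)=A\overline{C}(-1)\,A\overline{C}(1-y)$, which is precisely what is needed to turn the $(1-y)$-weight $\overline{B}C$ into $A\overline{C}=\overline{B}\cdot AB\overline{C}$, and the accumulated sign is $A(-1)$. For (iii) the substitution $y\mapsto y/(1-x+xy)$ is also the right one; alternatively one can obtain (iii) from (ii) by first applying the $A\leftrightarrow B$ symmetry identity ${_2}F_1(A,B;C\mid x)=\binom{B}{C}\binom{A}{C}^{-1}{_2}F_1(B,A;C\mid x)$, which Greene records separately.

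The part you flag as delicate---the $\delta$-corrections at $x\in\{0,1\}$---is indeed where the care is needed, and your plan to verify those endpoints directly from the definitions via \eqref{b5}, \eqref{b7} and Lemma~\ref{g8} is the standard way to close those cases.
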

\begin{lemma}\label{g21}
 For $A, B, C \in \widehat{\mathbb{F}_q^{\times}}$ and $x\in \mathbb{F}_q$ such  that $x\neq 0,1$, we have
\begin{align}
 {_{2}}F_1\left(\begin{array}{cccc}
                A, & B\\
                 & C
              \end{array}\mid x \right)= BC(-1)\overline{A}(x){_{2}}F_1\left(\begin{array}{cccc}
                                                                         A, & A\overline{C}\\
                                                                            & A\overline{B}
                                                            \end{array}\mid \frac{1}{x} \right).\notag
\end{align}
\begin{proof}
 Using Theorem \ref{thm7} (i) and (ii) we have 
 \begin{align}\label{g21eq1}
  {_{2}}F_1\left(\begin{array}{cccc}
                A, & B\\
                 & C
              \end{array}\mid x \right)=ABC(-1)\overline{A}(x){_{2}}F_1\left(\begin{array}{cccc}
                                                                              A, & B\\
                                                                                 & C
                                                                           \end{array}\mid \frac{x-1}{x} \right).
\end{align}
Again using Theorem \ref{thm7} (i) in \eqref{g21eq1} we complete the proof.
\end{proof}
\end{lemma}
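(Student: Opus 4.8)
The plan is to recognize the identity as the finite-field analogue of the classical ${_2}F_1$ transformation that carries the argument $z$ to $1/z$: the target parameters $A,\,A\overline{C};\,A\overline{B}$ are precisely the analogues of the classical $a,\,1+a-c;\,1+a-b$, and the prefactor $\overline{A}(x)$ mirrors the classical $z^{-a}$. Greene's Theorem \ref{thm7} only moves the argument among $x$, $1-x$ (part (i)) and $x/(x-1)$ (parts (ii), (iii)); but viewed as M\"obius transformations these generate the full six-element anharmonic orbit $\{x,\,1-x,\,1/x,\,1/(1-x),\,(x-1)/x,\,x/(x-1)\}$, which already contains $1/x$. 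So the whole proof is to reach $1/x$ by composing these transformations appropriately and to track how the characters and scalars transform along the way.

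Concretely I would chain three applications in the order (i), (ii), (i). First apply Theorem \ref{thm7}(i) to move from $x$ to $1-x$; this replaces the lower character $C$ by $AB\overline{C}$ and contributes the scalar $A(-1)$. Next apply Theorem \ref{thm7}(ii) to the resulting series evaluated at $1-x$: its argument becomes $(1-x)/\bigl((1-x)-1\bigr)=(x-1)/x$, its upper character $B$ becomes $A\overline{C}$, and it contributes $AB\overline{C}(-1)\,\overline{A}(x)$. Finally apply Theorem \ref{thm7}(i) a second time at argument $(x-1)/x$: since $1-(x-1)/x=1/x$ this lands on the desired $1/x$, sends the lower character to $A\overline{B}$ while leaving the upper pair $A,\,A\overline{C}$ untouched, and contributes a further $A(-1)$. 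Multiplying the three scalar factors and simplifying by means of $A(-1)^2=1$ and $\overline{C}(-1)=C(-1)$ then collapses the prefactor to the factor $BC(-1)\overline{A}(x)$ asserted in the statement.

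The one genuine thing to verify is that the chain is a clean equality with no leftover boundary contributions. Each use of Theorem \ref{thm7} carries along $\delta$-terms supported at argument $0$ or $1$, so I would check that under the hypothesis $x\neq 0,1$ the successive arguments $1-x$, $(x-1)/x$ and $1/x$ also avoid $0$ and $1$; this is immediate and makes every such $\delta$-term vanish. The main obstacle is thus not conceptual but the careful bookkeeping of the three character substitutions together with the consolidation of the accumulated scalar into a single factor, which is exactly where the precise shape of the prefactor is pinned down.
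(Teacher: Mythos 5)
Your route is exactly the paper's: its proof is precisely your chain (i), (ii), (i) of Theorem \ref{thm7}, and your check that every $\delta$-term vanishes for $x\neq 0,1$ is correct. The problem is the one step you deferred, the consolidation of the scalars, and it does not come out as you claim. The three factors are $A(-1)$ from the first use of (i), $AB\overline{C}(-1)\,\overline{A}(x)$ from (ii) (as you say, since the lower parameter at that stage is $AB\overline{C}$), and $A(-1)$ from the second use of (i). Their product is
\begin{align*}
A(-1)\cdot AB\overline{C}(-1)\cdot A(-1)\cdot\overline{A}(x)\;=\;AB\overline{C}(-1)\,\overline{A}(x)\;=\;ABC(-1)\,\overline{A}(x),
\end{align*}
because \emph{three} factors of $A(-1)$ occur in total, one of them hidden inside $AB\overline{C}(-1)$; the identity $A(-1)^2=1$ removes only two of them. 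So the chain you describe proves
\begin{align*}
{_{2}}F_1\left(\begin{array}{cc}
A, & B\\
 & C
\end{array}\mid x\right)=ABC(-1)\,\overline{A}(x)\;{_{2}}F_1\left(\begin{array}{cc}
A, & A\overline{C}\\
 & A\overline{B}
\end{array}\mid \frac{1}{x}\right),
\end{align*}
whose prefactor differs from the asserted $BC(-1)$ by $A(-1)$; the claimed collapse to $BC(-1)\overline{A}(x)$ simply does not follow from $A(-1)^2=1$ and $\overline{C}(-1)=C(-1)$.

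Moreover, this residual $A(-1)$ cannot be removed by better bookkeeping, because the $ABC(-1)$ version is the true identity. Test $x=-1$, $C=\overline{A}B$, $B=D^2$: by Lemma \ref{g16} the left-hand side equals ${D\choose A}+{\varphi D\choose A}$, while the series on the right is again of the form covered by Lemma \ref{g16} and equals ${A\overline{D}\choose A}+{\varphi A\overline{D}\choose A}=A(-1)\left[{D\choose A}+{\varphi D\choose A}\right]$ generically (compare the two binomials via Lemma \ref{g8} and Lemma \ref{g1}); since $\overline{A}(-1)=A(-1)$, the prefactor is forced to equal $ABC(-1)$, which is $1$ here, whereas $BC(-1)=A(-1)$. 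Hence for characters with $A(-1)=-1$ the identity as printed is false: the lemma's statement carries a sign typo, and in fact the paper's own intermediate equation \eqref{g21eq1} is garbled as well (it fails to update the parameters, and iterating the map $x\mapsto (x-1)/x$ three times shows it is self-inconsistent). A correct write-up of your chain — which is also the paper's intended argument — should terminate at the $ABC(-1)$ identity above; the paper's later applications (e.g., in the proof of Theorem \ref{Value-46}) use the lemma only with $A$ a square, where $A(-1)=1$, so they are unaffected by the discrepancy.
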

The following lemma gives values of McCarthy's finite field hypergeometric series at $x=1$.
\begin{lemma}\label{New1}
 For $A,B, C\in\widehat{\mathbb{F}_q^{\times}}$ we have
 \begin{align}
 &{_{2}}F_1\left(\begin{array}{cc}
                A, & B\\
                 & C
              \end{array}\mid 1 \right)^{\ast} = \frac{g(A\overline{C})g(B\overline{C})}{g(\overline{C})g(AB\overline{C})}
              +\frac{q(q-1)AB(-1)}{g(A)g(B)g(\overline{C})}\delta(AB\overline{C}).\notag
 \end{align}
\end{lemma}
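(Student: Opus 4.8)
The plan is to work directly from the definition \eqref{mcCarty's_defn} of McCarthy's hypergeometric series specialized to $n=1$ and evaluated at $x=1$, and then to reduce the resulting character sum to a single application of Lemma \ref{g2}. First I would substitute $n=1$, $A_0=A$, $A_1=B$, $B_1=C$ into \eqref{mcCarty's_defn}. Since $\chi(-1)^{n+1}=\chi(-1)^2=1$ for every character $\chi$ and $\chi(1)=1$, the entire $\chi$-dependent prefactor collapses, and after pulling the fixed Gauss sums out of the sum one obtains
\[
{_{2}F}_1\left(\begin{array}{cc} A, & B \\ & C \end{array}\mid 1 \right)^{\ast} = \frac{1}{(q-1)\, g(A) g(B) g(\overline{C})} \sum_{\chi\in\widehat{\mathbb{F}_q^\times}} g(A\chi)\, g(B\chi)\, g(\overline{C}\,\overline{\chi})\, g(\overline{\chi}),
\]
where I have used $g(\overline{C\chi})=g(\overline{C}\,\overline{\chi})$.

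Next I would recognize the remaining sum as a direct instance of Lemma \ref{g2}. Pairing the two factors carrying $\chi$ against $g(A\chi)g(B\chi)$ and the two factors carrying $\overline{\chi}$ against $g(C\overline{\chi})g(D\overline{\chi})$, the correct specialization is to take the lemma's parameters to be $A$, $B$, $\overline{C}$, and $\varepsilon$, using that $g(\overline{\chi})=g(\varepsilon\overline{\chi})$. This yields
\[
\frac{1}{q-1}\sum_{\chi} g(A\chi)\, g(B\chi)\, g(\overline{C}\,\overline{\chi})\, g(\overline{\chi}) = \frac{g(A\overline{C})\, g(A)\, g(B\overline{C})\, g(B)}{g(AB\overline{C})} + q(q-1)\, AB(-1)\, \delta(AB\overline{C}),
\]
after simplifying $g(A\varepsilon)=g(A)$, $g(B\varepsilon)=g(B)$, $g(AB\overline{C}\varepsilon)=g(AB\overline{C})$, and $\delta(AB\overline{C}\varepsilon)=\delta(AB\overline{C})$.

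Finally, dividing through by $g(A) g(B) g(\overline{C})$ cancels the factor $g(A) g(B)$ in the first term, producing $\tfrac{g(A\overline{C})g(B\overline{C})}{g(\overline{C})g(AB\overline{C})}$, while the second term becomes $\tfrac{q(q-1)AB(-1)}{g(A)g(B)g(\overline{C})}\delta(AB\overline{C})$. These are exactly the two claimed summands, completing the argument. The deduction is essentially immediate; the only step requiring genuine care is the bookkeeping in the second paragraph, namely correctly matching the $\chi$- and $\overline{\chi}$-factors and identifying the fourth parameter as the trivial character $\varepsilon$ so that $g(AD)$ and $g(BD)$ reduce to $g(A)$ and $g(B)$. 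I regard this minor identification as the sole (small) obstacle in an otherwise routine computation.
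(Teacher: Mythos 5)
Your proposal is correct and is precisely the argument the paper intends: the paper's own proof is the one-line remark that the lemma "follows directly by using \eqref{mcCarty's_defn} and Lemma \ref{g2}," and your computation—specializing McCarthy's definition at $x=1$, noting $\chi(-1)^{2}=1$, and applying Lemma \ref{g2} with fourth parameter $\varepsilon$—is exactly that deduction with the details filled in. All the bookkeeping (the cancellation of $g(A)g(B)$ in the first term and the identification $\delta(AB\overline{C}\varepsilon)=\delta(AB\overline{C})$) checks out.
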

\begin{proof}
 The proof follows directly by using  \eqref{mcCarty's_defn} and Lemma \ref{g2}.
\end{proof}
To deduce the special values obtained by Evans and Greene from our product formulas, we need to use the fact that $A(-1)=-1$ if $A$ is a non-square character. In the following two lemmas, we prove 
this fact. We do not know if the result already exists in the literature.
\begin{lemma}\label{sq-1}
Let $A \in \widehat{\mathbb{F}_q^{\times}}$ be of order $m>1$. Then $A(-1)= -1$ if and only if $m$ is even and $\frac{q-1}{m}$ is odd.
\begin{proof}
Let $g$ be a generator of the cyclic group $\mathbb{F}_q^{\times}$. Since $m$ is the order of the character $A$, therefore $A(g)=\zeta$, a primitive $m$-th root of unity.
We have $A(-1)= A(g^{\frac{q-1}{2}})= \zeta^{\frac{q-1}{2}}$. Suppose that $A(-1)=-1$. Then $(-1)^m=A^m(-1)=1$, and hence $m$ is even. Also, $\zeta^{\frac{q-1}{2}}= A(-1)=-1= \zeta^{\frac{m}{2}}$. 
This gives $\frac{q-1}{2} \equiv \frac{m}{2}\pmod{m}$, and hence 
$\frac{q-1}{m} \equiv 1\pmod{2}$ or equivalently $\frac{q-1}{m}$ is odd. Conversely, if $m$ is even and $ \frac{q-1}{m}$ is odd then $\frac{q-1}{2} \equiv \frac{m}{2}\pmod{m}$. 
Hence, $-1=\zeta^{\frac{m}{2}}=\zeta^{\frac{q-1}{2}}$. This implies that $A(-1)= -1$.   	
\end{proof}		
\end{lemma}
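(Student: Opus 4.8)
The plan is to reduce everything to a single power computation inside the cyclic group $\mathbb{F}_q^{\times}$. First I would fix a generator $g$ of $\mathbb{F}_q^{\times}$ and set $\zeta := A(g)$; since $A$ has order $m$, $\zeta$ is a primitive $m$-th root of unity. Because $q$ is odd, $q-1$ is even and $-1$ is the unique element of order two in $\mathbb{F}_q^{\times}$, namely $-1 = g^{(q-1)/2}$. Hence $A(-1) = A(g)^{(q-1)/2} = \zeta^{(q-1)/2}$, and writing $s := (q-1)/m$ (an integer, since the order $m$ divides the group order $q-1$) this reads $A(-1) = \zeta^{ms/2}$.

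The whole statement then follows from a parity analysis of $s$. If $s$ is even, then $\zeta^{ms/2} = (\zeta^{m})^{s/2} = 1$, so $A(-1) = 1$. If $s$ is odd, then $q-1 = ms$ being even forces $m$ to be even, and $\zeta^{ms/2} = (\zeta^{m})^{(s-1)/2}\,\zeta^{m/2} = \zeta^{m/2} = -1$, the last equality because a primitive $m$-th root of unity raised to the power $m/2$ is a primitive square root of unity. Thus $A(-1) = -1$ precisely when $s = (q-1)/m$ is odd, and in that case $m$ is automatically even; this is exactly the claimed characterization, so the parity dichotomy delivers both implications at once.

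I expect no genuine obstacle here: the argument is a short, self-contained computation in $\mathbb{F}_q^{\times}$ with the parity of $(q-1)/m$ carrying all the content. The only point demanding a little care is the identity $\zeta^{m/2} = -1$, which is only meaningful once $m$ is known to be even, so I would be sure to deduce the evenness of $m$ (either from $s$ odd with $q-1$ even, or from $(-1)^m = A^m(-1) = \varepsilon(-1) = 1$) before invoking it. One could equally run the two directions of the biconditional separately—starting the forward direction from $A(-1) = -1$ and comparing $\zeta^{(q-1)/2} = -1 = \zeta^{m/2}$ to obtain $(q-1)/2 \equiv m/2 \pmod{m}$ and hence that $(q-1)/m$ is odd—but the uniform parity computation above is cleaner and avoids treating the two implications in isolation.
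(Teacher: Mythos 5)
Your proposal is correct and takes essentially the same approach as the paper: both reduce the statement to evaluating $A(-1)=\zeta^{(q-1)/2}$, where $\zeta=A(g)$ is a primitive $m$-th root of unity and $g$ generates $\mathbb{F}_q^{\times}$. The only difference is organizational --- you run a single parity dichotomy on $s=(q-1)/m$ (which incidentally also records that $A(-1)=1$ when $s$ is even), whereas the paper proves the two implications separately via the congruence $\frac{q-1}{2}\equiv\frac{m}{2}\pmod{m}$ --- but the underlying computation is identical.
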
	
\begin{lemma}\label{sq-2}
If $A \in \widehat{\mathbb{F}_q^{\times}}$ is not a square, then $A(-1)= -1$.
\begin{proof}
Let $m$ be the order of the character $A$. Then $G=\langle A\rangle$ is a cyclic subgroup of $\widehat{\mathbb{F}_q^{\times}}$ of order $m$. Since $A$ is not a square character, so $A^2$ is not a generator of $G$. 
This implies that $\gcd(2, m)= 2$, that is $m$ is even. We next prove that $\frac{q-1}{m}$ is odd. Otherwise, $\widehat{\mathbb{F}_q^{\times}}$ will have an element of order $2m$, say $B$.
Then we must have $\langle A\rangle=\langle B^2 \rangle$. This is a contradiction to the fact that $A$ is not a square. Hence $\frac{q-1}{m}$ is odd. Using Lemma \ref{sq-1} we complete the proof of the lemma.     
	\end{proof}		
\end{lemma}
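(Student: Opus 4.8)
The plan is to reduce everything to the cyclic structure of $\widehat{\mathbb{F}_q^{\times}}$ together with a single parity computation. I fix the generator $T$ of $\widehat{\mathbb{F}_q^{\times}}$ introduced in Section~2 and write $A=T^{k}$ for the unique $k$ with $0\le k<q-1$. The first thing I would record is that $A$ fails to be a square precisely when $k$ is odd: a character $T^{k}$ equals $B^{2}$ for some $B=T^{j}$ if and only if $k\equiv 2j\pmod{q-1}$, and since $q$ is odd the modulus $q-1$ is even, so this congruence is solvable in $j$ exactly when $k$ is even.

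The second and decisive step is to evaluate $A(-1)$ directly. Let $g$ be a generator of the cyclic group $\mathbb{F}_q^{\times}$, so that $-1=g^{(q-1)/2}$ is its unique element of order $2$. Since $T(g)$ is a primitive $(q-1)$-th root of unity, say $\zeta$, I obtain $A(-1)=T^{k}\!\left(g^{(q-1)/2}\right)=\zeta^{k(q-1)/2}=\bigl(\zeta^{(q-1)/2}\bigr)^{k}=(-1)^{k}$, using $\zeta^{(q-1)/2}=-1$. Combining the two steps, $A$ is not a square if and only if $k$ is odd, if and only if $A(-1)=(-1)^{k}=-1$, which is exactly the assertion (and in fact yields the sharper equivalence).

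I expect no serious obstacle here; the only point that demands care is the parity bookkeeping, namely that solvability of $k\equiv 2j\pmod{q-1}$ rests on $\gcd(2,q-1)=2$, which is precisely where the oddness of $q$ enters. As an alternative I could route through the already-established Lemma~\ref{sq-1}: letting $m$ denote the order of $A$, the hypothesis that $A$ is not a square forces $A^{2}$ not to generate $\langle A\rangle$, hence $m$ is even; and were $(q-1)/m$ even, the cyclic group $\widehat{\mathbb{F}_q^{\times}}$ would contain an element $B$ of order $2m$ with $\langle B^{2}\rangle=\langle A\rangle$, contradicting that $A$ is not a square, so $(q-1)/m$ is odd and Lemma~\ref{sq-1} applies. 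I would nevertheless prefer the direct generator computation, since it sidesteps the case analysis on $m$ and delivers the equivalence in a single line.
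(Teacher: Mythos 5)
Your proposal is correct, and your primary argument is genuinely different from the paper's proof. The paper works intrinsically with the order $m$ of $A$: it first shows $m$ is even (otherwise $A^2$ would generate $\langle A\rangle$, making $A$ a square), then shows $\frac{q-1}{m}$ is odd (otherwise $\widehat{\mathbb{F}_q^{\times}}$ would contain an element $B$ of order $2m$ with $\langle B^2\rangle=\langle A\rangle$, again making $A$ a square), and finally invokes Lemma \ref{sq-1}; this is exactly the alternative route you sketched at the end. Your preferred argument instead exploits the fixed generator $T$ of $\widehat{\mathbb{F}_q^{\times}}$: writing $A=T^k$, the solvability criterion for $2j\equiv k\pmod{q-1}$ (valid because $\gcd(2,q-1)=2$, which is where the oddness of $q$ enters, as you correctly flag) identifies non-squares with odd $k$, and the computation $A(-1)=\zeta^{k(q-1)/2}=(-1)^k$ finishes in one line. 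Each step checks out, including the point that $\zeta=T(g)$ is a primitive $(q-1)$-th root of unity so that $\zeta^{(q-1)/2}=-1$. What each approach buys: yours is shorter, avoids the case analysis on $m$, and delivers the sharper biconditional ($A$ is a square if and only if $A(-1)=1$); the paper's route has the advantage of factoring through Lemma \ref{sq-1}, which it states and proves anyway because that finer criterion (in terms of $m$ and $\frac{q-1}{m}$) is needed elsewhere, for instance in the remark following Theorem \ref{Value-49} territory where $\varphi(-1)$ is evaluated according to congruence conditions on $q$, and it never requires choosing a generator of the character group.
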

\section{Proofs of the product formulas} 
In this section we prove the product formulas satisfied by the Gaussian hypergeometric series. In \cite{TB-1} we defined a finite field analogue of the classical 
Appell series $F_4$, and proved several identities satisfied by $F_4$ over finite fields. Our work on Appell series $F_4$ plays a crucial role in the proofs of the main results of this article. 
For $A, B, C \in \widehat{\mathbb{F}_q^{\times}}$ 
and $x, y\in \mathbb{F}_q$, we define the finite field analogue of Appell series $F_4$ by    
\begin{align}\label{f4-star}
& F_{4}(A;B;C,C^{\prime};x,y)^{*}\notag\\
& = \frac{1}{(q-1)^2} \sum_{\chi, \lambda\in\widehat{\mathbb{F}_q^{\times}}}
 \frac{g(A\chi\lambda)g(B\chi\lambda)g(\overline{C\chi})g(\overline{C^{\prime}\lambda})
 g(\overline{\lambda})g(\overline{\chi})}{g(A)g(B)g(\overline{C})g(\overline{C^{\prime}})}\chi(x)\lambda(y).
\end{align} 
In \cite[Theorem 1.2]{TB-1}, we expressed finite field Appell series $F_4$ as a product of McCarthy's ${_2}F_1$-hypergeometric series under the condition that $A, B, C\neq \varepsilon$.
To deduce some interesting special values of Gaussian hypergeometric series from our product formulas, we need to allow $C=\varepsilon$. In the following theorem, we restate Theorem 1.2 of \cite{TB-1} and 
present a brief proof.
\begin{theorem}\label{New-Lemma}
Let $A, B, C \in \widehat{\mathbb{F}_q^{\times}}$ be such that $A, B\neq \varepsilon$, $B\neq C$, and $A\neq C$. 
For $x, y\in \mathbb{F}_q$ with $x, y\neq 1$,  we have
 \begin{align}
&F_{4}\left(A;B;C,AB\overline{C};\frac{-x}{(1-x)(1-y)},\frac{-y}{(1-x)(1-y)}\right)^{*}\notag\\
  &\hspace{1cm}={_{2}}F_1\left(\begin{array}{cc}
                A, & B\\
                 & C
              \end{array}\mid -\frac{x}{1-x} \right)^{\ast}{_{2}}F_1\left(\begin{array}{cc}
                A, & B\\
                 & AB\overline{C}
              \end{array}\mid -\frac{y}{1-y} \right)^{\ast}\notag\\
              &\hspace{1.5cm}-\frac{q^2AC(-1)\overline{B}C(y)A(1-x)B(1-y)}{g(A)g(B)g(\overline{C})g(\overline{AB}C)}\delta(1-xy).\notag 
\end{align}
 \end{theorem}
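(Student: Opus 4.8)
The plan is to start from the right-hand side --- the product of the two McCarthy ${}_2F_1$-series --- and reorganize it into the Appell series plus the correction term. The statement is the finite field analogue of the classical product formula $F_4(\alpha,\beta;\gamma,\gamma';X,Y)={_2}F_1(\alpha,\beta;\gamma;x)\,{_2}F_1(\alpha,\beta;\gamma';y)$, which holds when $\gamma+\gamma'=\alpha+\beta+1$, $X=x(1-y)$ and $Y=y(1-x)$. The balancing condition $\gamma+\gamma'=\alpha+\beta+1$ is built into the second lower parameter $AB\overline C$, since $C\cdot(AB\overline C)=AB$; and writing $u=-x/(1-x)$, $v=-y/(1-y)$, a direct computation shows that the two Appell arguments equal $u(1-v)$ and $v(1-u)$, matching $X$ and $Y$. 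I would also record at the outset the elementary identity $1-xy=(1-u-v)/\big((1-u)(1-v)\big)$, which pins the exceptional locus of the formula at $\{xy=1\}$ and hence explains the shape of the $\delta$-term.

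Next I would insert the definition \eqref{mcCarty's_defn} into both factors on the right, obtaining a double character sum over $\chi,\lambda$ whose summand is a product of the decoupled Gauss sums $g(A\chi)g(B\chi)g(\overline{C\chi})g(\overline\chi)$ and $g(A\lambda)g(B\lambda)g(\overline{AB}C\,\overline\lambda)g(\overline\lambda)$, twisted by $\chi(u)\lambda(v)$. The Appell series \eqref{f4-star}, by contrast, carries the coupled Gauss sums $g(A\chi\lambda)$ and $g(B\chi\lambda)$; after expanding its monomial arguments via $\chi(-x/(1-x))=\chi(-x)\overline\chi(1-x)$ and the analogue for $\lambda$, its twist becomes $\chi(u)\overline\chi(1-y)\,\lambda(v)\overline\lambda(1-x)$. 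The bridge between the coupled and decoupled forms is Lemma \ref{g2}: applied with its last two characters specialized to $\chi\lambda$ and $\varepsilon$, it expresses the product $g(A\chi\lambda)g(B\chi\lambda)$ through a character convolution of the decoupled Gauss sums $g(A\psi),g(B\psi)$ (up to the $\delta(AB\chi\lambda)$ boundary term). Substituting this into the Appell sum and performing the resulting summations --- simplifying the Gauss sum quotients by Lemmas \ref{g10}, \ref{gj1}, \ref{g1} and \ref{g3} --- reduces the theorem to a Gauss sum identity matching the two double sums.

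The correction term then comes entirely from the degenerate contribution. The $q(q-1)AB(-1)\delta(AB\chi\lambda)$ piece of Lemma \ref{g2}, together with the orthogonality relation Lemma \ref{g5} in the form $\sum_\chi\chi(xy)=(q-1)\delta(1-xy)$, collapses to a single term supported on $\{xy=1\}$. Its coefficient should simplify, again via Lemmas \ref{gj1} and \ref{g1}, to $-\,q^2AC(-1)\overline B C(y)A(1-x)B(1-y)/\big(g(A)g(B)g(\overline C)g(\overline{AB}C)\big)$; note that the denominator here is exactly the normalization $g(A)g(B)g(\overline C)g(\overline{C'})$ of \eqref{f4-star} with $C'=AB\overline C$, which is a useful consistency check on the provenance of the term.

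The step I expect to be the main obstacle is precisely this boundary bookkeeping. Because the hypotheses are relaxed to allow $C=\varepsilon$ (the case required for the applications in later sections), several Gauss sums, such as $g(\overline C)$, and several character products, such as $AB\chi\lambda$ and $C\chi$, may degenerate to the trivial character, so every $\delta$-term produced by Lemmas \ref{g2}, \ref{gj1}, \ref{g1} and \ref{g3} must be retained and shown either to cancel or to coalesce into the single $\delta(1-xy)$ contribution. Keeping all of these degenerate pieces straight --- rather than evaluating the generic sum, which is routine once the convolution identity of Lemma \ref{g2} is in hand --- is where the genuine care is required.
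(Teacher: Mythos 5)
Your framing is correct --- this is the finite field analogue of Bailey's product formula for $F_4$, your verification that the Appell arguments equal $u(1-v)$ and $v(1-u)$ with $u=-x/(1-x)$, $v=-y/(1-y)$ is right, and you correctly spot that allowing $C=\varepsilon$ is why the paper restates the result --- but the central step of your plan fails. Lemma \ref{g2} with its last two characters specialized to $\chi\lambda$ and $\varepsilon$ gives
\[
\frac{1}{q-1}\sum_{\psi} g(A\psi)g(B\psi)g(\chi\lambda\overline{\psi})g(\overline{\psi})
=\frac{g(A)g(B)\,g(A\chi\lambda)g(B\chi\lambda)}{g(AB\chi\lambda)}+q(q-1)AB(-1)\delta(AB\chi\lambda),
\]
so what it decouples is $g(A\chi\lambda)g(B\chi\lambda)/g(AB\chi\lambda)$, not $g(A\chi\lambda)g(B\chi\lambda)$: the coupled factor $g(AB\chi\lambda)$ survives. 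Substituting this into \eqref{f4-star} produces a triple sum in which $g(AB\chi\lambda)$ and $g(\chi\lambda\overline{\psi})$ remain coupled; after the natural change of variable $\mu=\chi\lambda$, the inner $\chi$-sum carries the twist $\chi(X/Y)=\chi(x/y)$, and for $x\neq y$ such a twisted sum of four Gauss sums is itself, by \eqref{mcCarty's_defn}, a ${_2}F_1^{\ast}$-type sum --- it has no closed form via Lemmas \ref{g10}, \ref{gj1}, \ref{g1}, \ref{g3}. This is exactly the obstruction that makes the off-diagonal case hard: when $x=y$ (as in the proof of Theorem \ref{MT41}) the twist depends only on $\chi\lambda$ and Lemma \ref{g2} finishes the job, but not here. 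Your account of the correction term inherits the same error: on the support of $\delta(AB\chi\lambda)$ one has $\lambda=\overline{AB\chi}$, hence $\chi(X)\lambda(Y)=\overline{AB}(Y)\chi(x/y)$, so orthogonality (Lemma \ref{g5}) fires on $x=y$, not on $xy=1$; the $\delta(1-xy)$ term cannot arise the way you describe.

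The paper avoids this obstruction by not trying to match the two double sums directly. It starts from \cite[Thm 1.1]{TB-1}, a structural identity representing $\overline{A}(1-x)\overline{B}(1-y)F_4^{\ast}$ as a double sum of products of ${_2}F_1^{\ast}(\cdot\mid 1)$ values weighted by Gauss sums and by $\psi(-x)\chi(-y)$; those values at $1$ are evaluated by Lemma \ref{New1}, the main term then factors as ${_2}F_1(A,\overline{B}C;C\mid x)^{\ast}\,{_2}F_1(B,B\overline{C};AB\overline{C}\mid y)^{\ast}$, and Greene's transformation Theorem \ref{thm7}(ii) converts this product to the stated parameters and arguments $-x/(1-x)$, $-y/(1-y)$; the $\delta(1-xy)$ correction emerges from the bookkeeping via $\sum_{\psi}\psi(xy)=(q-1)\delta(1-xy)$. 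Your proposal contains no substitute for either ingredient (the double-sum representation of $F_4^{\ast}$ and the Pfaff-type transformation), and that is where the actual content of the proof lies.
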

 \begin{proof}
  The result holds trivially if $xy=0$. Therefore, we assume that both $x$ and $y$ are nonzero. 
From \cite[Thm 1.1]{TB-1} we have
 \begin{align}
  L:=&\overline{A}(1-x)\overline{B}(1-y)F_{4}\left(A;B;C,AB\overline{C};\frac{-x}{(1-x)(1-y)},\frac{-y}{(1-x)(1-y)}\right)^{*}\notag\\
  &=\frac{1}{(q-1)^2}\sum_{\psi, \chi\in \widehat{\mathbb{F}_q^{\times}}}{_{2}}F_1\left(\begin{array}{cc}
                \overline{\chi}, & A\psi\\
                 & AB\overline{C}
              \end{array}\mid 1 \right)^{\ast}{_{2}}F_1\left(\begin{array}{cc}
                \overline{\psi}, & B\chi\\
                 & C
              \end{array}\mid 1 \right)^{\ast}\notag\\
              &\hspace{3cm}\times \frac{g(A\psi)g(B\chi)g(\overline{\chi})g(\overline{\psi})}{g(A)g(B)}\psi(-x)\chi(-y).\notag
 \end{align}
  Lemma \ref{New1} yields
\begin{align}
 L&=\frac{1}{(q-1)^2}\sum_{\psi, \chi}\left(\frac{g(\overline{AB}C\overline{\chi})g(\overline{B}C\psi)}{g(\overline{AB}C)g(C\psi\overline{B\chi})}
 +\frac{q(q-1)A\psi\chi(-1)\delta(C\psi\overline{B\chi})}{g(\overline{\chi})g(A\psi)g(\overline{AB}C)}\right) \notag \\
 &\times\left(\frac{g(\overline{C\psi})g(B\overline{C}\chi)}{g(\overline{C})g(B\chi\overline{C\psi})}
 +\frac{q(q-1)B\psi\chi(-1)\delta(B\chi\overline{C\psi})}{g(\overline{\psi})g(B\chi)g(\overline{C})}\right)\notag\\
 &\times 
 \frac{g(A\psi)g(B\chi)g(\overline{\chi})g(\overline{\psi})\psi(-x)\chi(-y)}{g(A)g(B)}\notag \\
 \label{mt2-q1}
 &=\frac{1}{(q-1)^2}\sum_{\psi, \chi}\frac{g(\overline{AB}C\overline{\chi})g(\overline{B}C\psi)g(\overline{C\psi})g(B\overline{C}\chi)g(A\psi)g(B\chi)}
 {g(A)g(B)g(\overline{C})g(\overline{AB}C)g(C\psi\overline{B\chi})g(B\chi\overline{C\psi})}  \\
 &\hspace{2cm}\times g(\overline{\chi})g(\overline{\psi})\psi(-x)\chi(-y)+\alpha_1+\alpha_2+\alpha_3,\notag
\end{align}
where
\begin{align}
 \alpha_1 &= A(-1)\frac{q}{q-1}\sum_{\psi,\chi}\frac{g(\overline{C\psi})g(B\overline{C}\chi)g(B\chi)g(\overline{\psi})}
 {g(A)g(B)g(\overline{C})g(\overline{AB}C)g(B\overline{C\psi}\chi)}\chi(y)\psi(x)\delta(\overline{B}C\overline{\chi}\psi),\notag\\
 \alpha_2&= B(-1)\frac{q}{q-1}\sum_{\psi,\chi}\frac{g(\overline{AB}C\overline{\chi})g(\overline{B}C\psi)g(A\psi)g(\overline{\chi})}
 {g(A)g(B)g(\overline{AB}C)g(\overline{B}C\overline{\chi}\psi)g(\overline{C})}\chi(y)\psi(x)\delta(B\overline{C}\overline{\psi}\chi),\notag \\
 \alpha_3&=q^2AB(-1)\sum_{\psi,\chi}\frac{\psi(-x)\chi(-y)\delta(\overline{B}C\overline{\chi}\psi)\delta(B\overline{C}\overline{\psi}\chi)}
 {g(A)g(B)g(\overline{AB}C)g(\overline{C})}.\notag
\end{align}
The above terms are nonzero only when $\overline{\chi}\psi=B\overline{C}$. So, after putting $\overline{\chi}=B\overline{C\psi}$ and using the fact that $g(\varepsilon)=-1$,
we obtain
\begin{align}\label{mt2-q2}
 \alpha_1&= -A(-1)\frac{q}{q-1}\sum_{\psi}\frac{g(\overline{C\psi})g(C\psi)g(\overline{\psi})g(\psi)}
 {g(A)g(B)g(\overline{C})g(\overline{AB}C)}\overline{B}C(y)\psi(xy),\\
 \label{mt2-q3}
 \alpha_2&= -B(-1)\frac{q}{q-1}\sum_{\psi}\frac{g(\overline{A\psi})g(A\psi)g(B\overline{C\psi})g(\overline{B}C\psi)}
 {g(A)g(B)g(\overline{C})g(\overline{AB}C)}\overline{B}C(y)\psi(xy),\\
 \alpha_3&=\frac{q^2AB(-1)\overline{B}C(-y)}{g(A)g(B)g(\overline{AB}C)g(\overline{C})}\sum_{\psi}\psi(xy).\notag
\end{align}
In case of $\alpha_3$, Lemma \ref{g5} yields
 \begin{align}\label{mt2-q4}
 \alpha_3=\frac{q^2(q-1)AC(-1)\overline{B}C(y)}{g(A)g(B)g(\overline{AB}C)g(\overline{C})}\delta(1-xy).
 \end{align}
Using Lemma \ref{g1} and  Lemma \ref{g5} we have 
\begin{align}\label{mt2-q13}
 \alpha_1 &= -\frac{q^3AC(-1)\overline{B}C(y)\delta(1-xy)}{g(A)g(B)g(\overline{C})g(\overline{AB}C)} + \frac{q^2AC(-1)\overline{B}C(y)\overline{C}(xy)}{g(A)
 g(B)g(\overline{C})g(\overline{AB}C)} \notag\\
 & \hspace{1cm}+ \frac{q^2AC(-1)\overline{B}C(y)}{g(A)g(B)g(\overline{C})g(\overline{AB}C)}- \frac{q(q-1)A(-1)\overline{B}C(y)}{g(A)g(B)g(\overline{C})g(\overline{AB}C)}\delta(C).
\end{align}
From \eqref{mt2-q1} we have
\begin{align}
 L&=\frac{1}{(q-1)^2}\sum_{\substack{\psi, \chi\\\overline{\chi}\psi\neq B\overline{C}}}\frac{g(\overline{AB}C\overline{\chi})g(\overline{B}C\psi)g(\overline{C\psi})
 g(B\overline{C}\chi)g(A\psi)g(B\chi)}{g(A)g(B)g(\overline{C})g(\overline{AB}C)g(C\psi\overline{B\chi})g(B\chi\overline{C\psi})}\notag \\
\hspace{2cm} &\times g(\overline{\chi})g(\overline{\psi})\psi(-x)\chi(-y)+\beta +\alpha_1+\alpha_2+\alpha_3,\notag
\end{align}
where
\begin{align}
\beta =\frac{BC(-1)}{(q-1)^2}\sum_{\psi}\frac{g(\overline{A\psi})g(A\psi)g(\overline{B}C\psi)g(B\overline{C\psi})g(\overline{C\psi})g(C\psi)g(\psi)
 g(\overline{\psi})}{g(A)g(B)g(\overline{C})g(\overline{AB}C)}\overline{B}C(y)\psi(xy).\notag
\end{align}
Using Lemma \ref{g1} on $g(C\psi\overline{B\chi})g(B\chi\overline{C\psi})$ we have
\begin{align}
 L&=\frac{BC(-1)}{q(q-1)^2}\sum_{\substack{\psi, \chi\\\overline{\chi}\psi\neq B\overline{C}}}\frac{g(\overline{AB}C\overline{\chi})g(\overline{B}C\psi)g(\overline{C\psi})
 g(B\overline{C}\chi)g(A\psi)g(B\chi)}{g(A)g(B)g(\overline{C})g(\overline{AB}C)} \notag\\
\hspace{2cm} &\times g(\overline{\chi})g(\overline{\psi})\psi(x)\chi(y)+\beta +\alpha_1+\alpha_2+\alpha_3\notag\\
\label{mt2-q8}
 &=\frac{BC(-1)}{q(q-1)^2}\sum_{\psi,\chi }\frac{g(\overline{AB}C\overline{\chi})g(\overline{B}C\psi)g(\overline{C\psi})g(B\overline{C}\chi)g(A\psi)g(B\chi)}
 {g(A)g(B)g(\overline{C})g(\overline{AB}C)}\\
 &\times g(\overline{\chi})g(\overline{\psi})\psi(x)\chi(y) + \frac{q-1}{q}\beta +\alpha_1+\alpha_2+\alpha_3.\notag
\end{align}
Employing Lemma \ref{g1} on $g(C\psi)g(\overline{C\psi})$ and $g(\psi)g(\overline{\psi})$ we have
\begin{align}\label{mt2-q9}
 \beta=\frac{q^2B(-1)}{(q-1)^2}\sum_{\psi}\frac{g(\overline{A\psi})g(A\psi)g(B\overline{C\psi})g(\overline{B}C\psi)}
 {g(A)g(B)g(\overline{C})g(\overline{AB}C)}\overline{B}C(y)\psi(xy) - \beta_1 -\beta_2 +\beta_3,
 \end{align}
where 
\begin{align}
 &\beta_1 = \frac{qBC(-1)}{q-1}\sum_{\psi}\frac{g(\overline{A\psi})g(A\psi)g(B\overline{C\psi})g(\overline{B}C\psi)\overline{B}C(y)\psi(xy)}
 {g(A)g(B)g(\overline{C})g(\overline{AB}C)}\psi(-1)\delta(C\psi), \notag \\ 
 &\beta_2 = \frac{qBC(-1)}{q-1}\sum_{\psi}\frac{g(\overline{A\psi})g(A\psi)g(B\overline{C\psi})g(\overline{B}C\psi)\overline{B}C(y)\psi(xy)}
 {g(A)g(B)g(\overline{C})g(\overline{AB}C)}C\psi(-1)\delta(\psi),\notag \\
 &\beta_3 = BC(-1)\sum_{\psi}\frac{g(\overline{A\psi})g(A\psi)g(B\overline{C\psi})g(\overline{B}C\psi)\overline{B}C(y)\psi(xy)}
 {g(A)g(B)g(\overline{C})g(\overline{AB}C)}\delta(C\psi)\delta(\psi).\notag 
\end{align}
Since $\beta_1$ is nonzero only when $\psi=\overline{C}$, so after putting $\psi=\overline{C}$ and then using Lemma \ref{g1} and the fact that $B, A\overline{C}\neq\varepsilon$, we obtain
\begin{align}\label{mt2-q10}
\beta_1&= \frac{q^3 AC(-1)\overline{B}C(y)\overline{C}(xy)}{(q-1)g(A)g(B)g(\overline{C})g(\overline{AB}C)}. 
\end{align}
 Similarly, 
\begin{align}\label{mt2-q11}
\beta_2&=\frac{q^3 AC(-1)\overline{B}C(y)}{(q-1)g(A)g(B)g(\overline{C})g(\overline{AB}C)}
\end{align}
and
 \begin{align}\label{New2}
  \beta_3&= \frac{q^2 A(-1)\overline{B}C(y)}{g(A)g(B)g(\overline{C})g(\overline{AB}C)}\delta(C). 
 \end{align}
 Putting  \eqref{mt2-q9} and \eqref{mt2-q3} into \eqref{mt2-q8} we obtain
\begin{align}\label{New3}
 L&=\frac{BC(-1)}{q(q-1)^2}\sum_{\psi,\chi }\frac{g(\overline{AB}C\overline{\chi})g(\overline{B}C\psi)g(\overline{C\psi})g(B\overline{C}\chi)g(A\psi)g(B\chi)}
 {g(A)g(B)g(\overline{C})g(\overline{AB}C)}\notag\\
 &\times g(\overline{\chi})g(\overline{\psi})\psi(x)\chi(y) - \frac{q-1}{q}\beta_1 - \frac{q-1}{q}\beta_2 +\frac{q-1}{q}\beta_3 +\alpha_1+\alpha_3.
\end{align}
Multiplying both numerator and denominator by $g(B\overline{C})g(\overline{B}C)$, and then using Lemma \ref{g1} and \eqref{mcCarty's_defn} we have
\begin{align}\label{new-5}
 L&={_{2}}F_1\left(\begin{array}{cc}
                A, & \overline{B}C\\
                 & C
              \end{array}\mid x \right)^{\ast}{_{2}}F_1\left(\begin{array}{cc}
                B, & B\overline{C}\\
                 & AB\overline{C}
              \end{array}\mid y \right)^{\ast} \notag\\
              &\hspace{1cm}- \frac{q-1}{q}\beta_1 - \frac{q-1}{q}\beta_2+\frac{q-1}{q}\beta_3 +\alpha_1+\alpha_3.
\end{align}  
Using Theorem \ref{thm7} (ii) in \eqref{new-5}, and then combining \eqref{mt2-q4}, \eqref{mt2-q13}, \eqref{mt2-q10},   \eqref{mt2-q11} and \eqref{New2} we find that
\begin{align*}
 L&=\overline{A}(1-x)\overline{B}(1-y){_{2}}F_1\left(\begin{array}{cc}
                A, & B\\
                 & C
              \end{array}\mid \frac{-x}{1-x} \right)^{\ast}{_{2}}F_1\left(\begin{array}{cc}
                B, & A\\
                 & AB\overline{C}
              \end{array}\mid \frac{-y}{1-y} \right)^{\ast}  \\
  &- \frac{q^2AC(-1)\overline{B}C(y)\delta(1-xy)}{g(A)g(B)g(\overline{C})g(\overline{AB}C)}. 
\end{align*}
Finally, multiplying both sides by $A(1-x)B(1-y)$ we complete the proof of the theorem. 
\end{proof}
 In the following lemma, we re-write Theorem \ref{New-Lemma} in terms of Greene's finite field hypergeometric series. 
\begin{lemma}\label{New-Lemma-1}
 Let $A, B, C \in \widehat{\mathbb{F}_q^{\times}}$ be such that $A, B\neq \varepsilon$, $B\neq C$, and  $A\neq C$. 
For $z, w\in \mathbb{F}_q$ such that $z, w\neq 1$  we have
\begin{align}
 &{_{2}}F_1\left(\begin{array}{cc}
                A, & B\\
                 & C
              \end{array}\mid z \right){_{2}}F_1\left(\begin{array}{cc}
                A, & B\\
                 & AB\overline{C}
              \end{array}\mid w \right)\notag\\
              &=\frac{A(-1)g(B)g(\overline{C})g(\overline{AB}C)}{qg(\overline{B})g(B\overline{C})g(\overline{A}C)}F_{4}\left(A;B;C,AB\overline{C};z(1-w),w(1-z)\right)^{*} \notag\\
&\hspace{1cm}+\frac{qB(-1)\overline{A}(1-z)\overline{B}C(w)\overline{C}(1-w)}{g(A)g(\overline{B})g(B\overline{C})g(\overline{A}C)}\delta\left(\frac{1-w-z}{(1-z)(1-w)}\right).\notag
\end{align}
\begin{proof}
Using \eqref{prop-300} in Theorem \ref{New-Lemma} we have
\begin{align}\label{New Lemma-1eq1}
&{_{2}}F_1\left(\begin{array}{cc}
                A, & B\\
                 & C
              \end{array}\mid -\frac{x}{1-x} \right){_{2}}F_1\left(\begin{array}{cc}
                A, & B\\
                 & AB\overline{C}
              \end{array}\mid -\frac{y}{1-y} \right)\notag\\
&= {B\choose C}{B\choose AB\overline{C}}\times\left[F_{4}\left(A;B;C,AB\overline{C};\frac{-x}{(1-x)(1-y)},\frac{-y}{(1-x)(1-y)}\right)^{*}\right.\notag\\ &\hspace{3.5cm}\left.+\frac{q^2AC(-1)\overline{B}C(y)A(1-x)B(1-y)}{g(A)g(B)g(\overline{C})g(\overline{AB}C)}\delta(1-xy)
 \right].           
\end{align}
Applying Lemma \ref{g8} and Lemma \ref{g1}, and then putting $z=\frac{-x}{1-x}$ and $w=\frac{-y}{1-y}$ in \eqref{New Lemma-1eq1}, we complete the proof.
\end{proof}
\end{lemma}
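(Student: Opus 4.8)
The plan is to obtain Lemma~\ref{New-Lemma-1} directly from Theorem~\ref{New-Lemma} by converting McCarthy's normalization into Greene's normalization and then performing the change of variables that turns the arguments $-x/(1-x)$ into the clean variable $z$. First I would rearrange Theorem~\ref{New-Lemma} so that the product of the two McCarthy series sits alone on the left, and then apply the McCarthy--Greene relation \eqref{prop-300} with $n=1$ to each factor. For the first factor I take $(A_0,A_1,B_1)=(A,B,C)$, which is legitimate precisely because the hypotheses $A\neq\varepsilon$ and $B\neq C$ are the conditions required by \eqref{prop-300}; for the second factor I take $(A_0,A_1,B_1)=(A,B,AB\overline{C})$, where the condition $A_1\neq B_1$ reads $B\neq AB\overline{C}$, i.e.\ $A\neq C$, again among the hypotheses. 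This replaces each ${_2}F_1^{\ast}$ by $\binom{B}{C}^{-1}$ (resp.\ $\binom{B}{AB\overline{C}}^{-1}$) times Greene's ${_2}F_1$.

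Moving the two inverse binomial coefficients to the right-hand side produces the common factor $\binom{B}{C}\binom{B}{AB\overline{C}}$ multiplying both the $F_4^{\ast}$ term and the $\delta(1-xy)$ term. Next I would evaluate this product of binomials with Lemma~\ref{g8}. Since $B\neq C$ and $B\neq AB\overline{C}$, the $\delta$-summands in Lemma~\ref{g8} drop out and, using $B\,\overline{AB\overline{C}}=\overline{A}C$ and combining the $(-1)$-values, I get
\begin{align*}
\binom{B}{C}\binom{B}{AB\overline{C}}=\frac{AB(-1)\,g(B)^2\,g(\overline{C})\,g(\overline{AB}C)}{q^2\,g(B\overline{C})\,g(\overline{A}C)}.
\end{align*}
The one surplus Gauss-sum factor (a $g(B)^2$ against the single $g(B)$ in the statement) is absorbed by Lemma~\ref{g1}: since $B\neq\varepsilon$ we have $g(B)g(\overline{B})=qB(-1)$, so $g(B)=qB(-1)/g(\overline{B})$. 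Applying this once turns the coefficient into $\frac{A(-1)g(B)g(\overline{C})g(\overline{AB}C)}{q\,g(\overline{B})g(B\overline{C})g(\overline{A}C)}$, exactly the $F_4^{\ast}$-coefficient in the statement.

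Finally I would carry out the substitution $z=-x/(1-x)$, $w=-y/(1-y)$, equivalently $x=z/(z-1)$ with $1-x=1/(1-z)$ and likewise for $y,w$. Under this change one checks that $\tfrac{-x}{(1-x)(1-y)}=z(1-w)$ and $\tfrac{-y}{(1-x)(1-y)}=w(1-z)$, so the Appell arguments become $z(1-w),\,w(1-z)$, while $1-xy$ transforms into $\tfrac{1-w-z}{(1-z)(1-w)}$, yielding the asserted $\delta$-argument. The character factors in the delta term are then simplified with $M(1/u)=\overline{M}(u)$: this sends $A(1-x)\mapsto\overline{A}(1-z)$, $B(1-y)\mapsto\overline{B}(1-w)$, and $\overline{B}C(y)\mapsto\overline{B}C(w)\,B\overline{C}(w-1)$; writing $B\overline{C}(w-1)=B\overline{C}(-1)B\overline{C}(1-w)$ and multiplying through, the factor $B\overline{C}(1-w)\overline{B}(1-w)=\overline{C}(1-w)$ emerges and the sign $BC(-1)B\overline{C}(-1)=B^2(-1)=1$ cancels, producing the stated numerator $qB(-1)\overline{A}(1-z)\,\overline{B}C(w)\,\overline{C}(1-w)$.

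The genuinely mathematical inputs are light: the only nontrivial identities used are the McCarthy--Greene passage \eqref{prop-300}, the binomial evaluation Lemma~\ref{g8}, and the Gauss-sum reflection Lemma~\ref{g1}. Consequently the main obstacle is not any single hard step but the bookkeeping of the $(-1)$-values, the Gauss-sum factors, and above all the character arguments under $x\mapsto z$; the chief risk is a sign or an inverted Gauss sum slipping in during the $\overline{B}C(y)B(1-y)$ simplification, so I would verify that the $(-1)$-characters collapse to $1$ and that exactly one application of Lemma~\ref{g1} is needed to reconcile the two coefficients.
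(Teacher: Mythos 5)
Your proposal is correct and follows essentially the same route as the paper: apply \eqref{prop-300} to Theorem~\ref{New-Lemma} (using $A\neq\varepsilon$, $B\neq C$, $A\neq C$ exactly as you note), evaluate the resulting binomial product $\binom{B}{C}\binom{B}{AB\overline{C}}$ via Lemma~\ref{g8}, absorb the extra $g(B)$ with one application of Lemma~\ref{g1}, and substitute $z=-x/(1-x)$, $w=-y/(1-y)$. The paper states these steps without carrying them out; your computations (the binomial product, the transformed Appell arguments $z(1-w)$, $w(1-z)$, the $\delta$-argument, and the sign bookkeeping yielding $qB(-1)\overline{A}(1-z)\overline{B}C(w)\overline{C}(1-w)$) all check out.
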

We now present a proof of our main product formula Theorem \ref{MT41}.
\begin{proof}[Proof of Theorem \ref{MT41}] The result is trivially true if $x=0$. So, we assume that $x\neq 0$.
Let
\begin{align}
  L:&= {_{2}}F_1\left(\begin{array}{cccc}
            A^2, & B^2 \\
                 & C
              \end{array}\mid x \right){_{2}}F_1\left(\begin{array}{cccc}
                                                                       A^2, & B^2 \\
                                                                            & A^2B^2\overline{C}
                                                     \end{array}\mid x \right).\notag
\end{align}
Using Lemma \ref{New-Lemma-1}, we have 
\begin{align}\label{mt41eq1}
L&=\frac{g(B^2)g(\overline{C})g(\overline{A^2B^2}C)}{qg(\overline{B^2})g(B^2\overline{C})g(\overline{A^2}C)}F_{4}\left(A^2;B^2;C,A^2B^2\overline{C};x(1-x), x(1-x)\right)^{*}+ I_1,                                                     
\end{align}
where
\begin{align}\label{mt41eq3}
I_1&=\frac{q\overline{C}\overline{A^2}(1-x)C\overline{B^2}(x)}{g(A^2)g(\overline{B^2})g(B^2\overline{C})g(\overline{A^2}C)}\delta\left(\frac{1-2x}{(1-x)^2}\right).
\end{align}
Now employing \eqref{f4-star} into \eqref{mt41eq1} we have
\begin{align}
L&=\frac{1}{q(q-1)^2} \sum_{\chi, \lambda\in\widehat{\mathbb{F}_q^{\times}}}
  \frac{g(A^2\chi\lambda)g(B^2\chi\lambda)g(\overline{C\chi})g(\overline{A^2B^2}C\overline{\lambda})g(\overline{\lambda})g(\overline{\chi})}{g(A^2)g(\overline{B^2})
  g(B^2\overline{C})g(\overline{A^2}C)}\chi\lambda(x-x^2)+ I_1. \notag                                                    
\end{align}
The change of variables $\chi\mapsto\chi\overline{\lambda}$ yield 
 \begin{align}
  L&=\frac{1}{q(q-1)^2}\sum_{\chi, \lambda\in\widehat{\mathbb{F}_q^{\times}}}
  \frac{g(A^2\chi)g(B^2\chi)g(\overline{C\chi}\lambda)g(\overline{A^2B^2}C\overline{\lambda})g(\overline{\lambda})g(\overline{\chi}\lambda)}{g(A^2)g(\overline{B^2})g(B^2\overline{C})
  g(\overline{A^2}C)}\chi(x-x^2) + I_1.\notag
 \end{align}
Using Lemma \ref{g2}, we have
 \begin{align}\label{mt41eq5}
   L&=\frac{1}{q(q-1)}\sum_{\chi\in\widehat{\mathbb{F}_q^{\times}}}
  \frac{g(A^2\chi)g(B^2\chi)g(\overline{A^2B^2\chi})g(\overline{A^2B^2}C\overline{\chi})g(\overline{C\chi})g(\overline{\chi})}{g(A^2)g(\overline{B^2})g(B^2\overline{C})g(\overline{A^2}C)
  g(\overline{A^2B^2\chi^2})}\chi(x-x^2) + I_1 + I_2,
\end{align}
where
  \begin{align}\label{mt41eq6}
   I_2&= C(-1)\sum_{\chi\in\widehat{\mathbb{F}_q^{\times}}}
 \frac{ g(A^2\chi)g(B^2\chi)\chi(x-x^2)}{g(A^2)g(\overline{B^2})g(B^2\overline{C})g(\overline{A^2}C)}\delta(\overline{A^2B^2\chi^2})\notag\\
  &=C(-1)\left[\frac{g(A\overline{B})g(\overline{A}B)\overline{AB}(x-x^2)+ g(\overline{A}B\varphi)g(A\overline{B}\varphi)\overline{AB}\varphi(x-x^2)}{g(A^2)g(\overline{B^2})g(B^2\overline{C})g(\overline{A^2}C)}\right]. 
  \end{align}
 The last equality is obtained by putting $\chi= \overline{AB}, \overline{AB}\varphi$. Now using Lemma \ref{g3} in \eqref{mt41eq5}, we have 
  \begin{align}\label{mt41eq7}
    L&=\frac{1}{q^2(q-1)} \sum_{\chi\in\widehat{\mathbb{F}_q^{\times}}}
   \frac{g(A^2\chi)g(B^2\chi)g(\overline{C\chi})g(\overline{A^2B^2\chi})g(\overline{A^2B^2}C\overline{\chi})}{g(A^2)g(\overline{B^2})g(B^2\overline{C})g(\overline{A^2}C)}\notag\\
  &\hspace{3cm}\times g(A^2B^2\chi^2)g(\overline{\chi})\chi(x-x^2) + I_1 + I_2 - I_3,
   \end{align}
 where
 \begin{align}\label{mt41eq8}
    I_3&= \frac{1}{q^2} \sum_{\chi\in\widehat{\mathbb{F}_q^{\times}}}\frac{g(A^2\chi)g(B^2\chi)g(\overline{C\chi})g(\overline{A^2B^2\chi})
     g(\overline{A^2B^2}C\overline{\chi})g(\overline{\chi})\chi(x-x^2)}{g(A^2)g(\overline{B^2})g(B^2\overline{C})g(\overline{A^2}C)}\delta(A^2B^2\chi^2)\notag\\
  &= I_2 + \frac{(q-1)g(A\overline{B})g(\overline{A}B)\overline{AB}(x-x^2)}{q^2g(A^2)g(\overline{B^2})g(B^2\overline{C})g(\overline{A^2}C)}\left[\{(q-1)\delta(AB)
  - qAB(-1)\}\delta(AB\overline{C})\right.\notag\\
  &\left.-qABC(-1)\delta(AB)\right] 
  + \frac{(q-1)g(\overline{A}B\varphi)g(A\overline{B}\varphi)\overline{AB}\varphi(x-x^2)}{q^2g(A^2)g(\overline{B^2})g(B^2\overline{C})g(\overline{A^2}C)} \notag\\
  &\hspace{.5cm}\times \left[(q-1)\delta(AB\overline{C}\varphi)\delta(AB\varphi)-qAB\varphi(-1)\delta(AB\overline{C}\varphi) -q ABC\varphi(-1)\delta(AB\varphi) \right].
   \end{align}
   The last equality is obtained by putting $\chi= \overline{AB}, \overline{AB}\varphi$ and using Lemma \ref{g1} on $g(AB)g(\overline{AB}), g(AB\overline{C})g(\overline{AB}C)$, 
   $g(AB\varphi)g(\overline{AB}\varphi)$ and $g(AB\overline{C}\varphi)g(\overline{AB}C\varphi).$ Now using Lemma \ref{g10} on $g(A^2B^2\chi^2)$, \eqref{mt41eq7} reduces to
   \begin{align}
    L&=\frac{AB(4)}{q^2(q-1)} \sum_{\chi\in\widehat{\mathbb{F}_q^{\times}}}
   \frac{g(A^2\chi)g(B^2\chi)g(\overline{C\chi})g(\overline{A^2B^2\chi})g(\overline{A^2B^2}C\overline{\chi})g(AB\chi)}{g(A^2)g(\overline{B^2})g(B^2\overline{C})g(\overline{A^2}C)g(\varphi)}\notag\\
   &\hspace{3cm}\times g(AB\varphi\chi)g(\overline{\chi})\chi(4x-4x^2) + I_1 + I_2 - I_3.\notag
   \end{align}
Multiplying both numerator and denominator by $q^4g(\overline{A^2})g(AB\overline{C})g(\overline{AB}C\varphi)$ and then rearranging the terms  we have
\begin{align}
    L&=\frac{q^2AB(4)g(\overline{A^2})g(AB\overline{C})g(\overline{AB}C\varphi)}{(q-1)g(\overline{B^2})g(B^2\overline{C})g(\overline{A^2}C)g(\varphi)} \notag\\
    &\times \sum_{\chi\in\widehat{\mathbb{F}_q^{\times}}}\left(
   \frac{g(A^2\chi)g(\overline{\chi})\chi(-1)}{qg(A^2)}\times \frac{g(B^2\chi)g(\overline{A^2B^2\chi})\chi(-1)}{qg(\overline{A^2})}\times \frac{g(AB\chi)g(\overline{C\chi})C\chi(-1)}{qg(AB\overline{C})}\right.\notag\\
   &\left. \times \frac{g(AB\varphi\chi)g(\overline{A^2B^2}C\overline{\chi})C\chi(-1)}{qg(\overline{AB}C\varphi)}\chi(4x-4x^2)\right) + I_1 + I_2 - I_3.\notag
   \end{align}
Lemma \ref{g8} yields
\begin{align}
   L&=\frac{q^2AB(4)g(\overline{A^2})g(AB\overline{C})g(\overline{AB}C\varphi)}{(q-1)g(\overline{B^2})g(B^2\overline{C})g(\overline{A^2}C)g(\varphi)} 
   \sum_{\chi\in\widehat{\mathbb{F}_q^{\times}}}{A^2\chi\choose \chi}\left({AB\chi\choose C\chi}-\frac{q-1}{q}\delta(AB\overline{C}) \right)\notag\\
   &\times {B^2\chi\choose A^2B^2\chi}\left({AB\varphi\chi\choose A^2B^2\overline{C}\chi}-\frac{q-1}{q}\delta(\overline{AB}C\varphi) \right)\chi(4x-4x^2) + I_1 + I_2 - I_3.\notag
\end{align}
Using \eqref{Greene-def-4} we have 
\begin{align}\label{mt41eq9}
   L&=\frac{qAB(4)g(\overline{A^2})g(AB\overline{C})g(\overline{AB}C\varphi)}{g(\overline{B^2})g(B^2\overline{C})g(\overline{A^2}C)g(\varphi)} {_{4}}F_3\left(\begin{array}{cccccc}
             \hspace{-.1cm}A^2, & \hspace{-.2cm}B^2, & \hspace{-.2cm} AB, & \hspace{-.2cm} AB\varphi \\
                &\hspace{-.2cm} A^2B^2, & \hspace{-.2cm} C, & \hspace{-.2cm} A^2B^2\overline{C} 
            \end{array}\mid 4x(1-x) \right)\notag\\
&\hspace{1.5cm}-\frac{(q-1)AB(4)g(\overline{A^2})g(AB\overline{C})g(\overline{AB}C\varphi)}{g(\overline{B^2})g(B^2\overline{C})g(\overline{A^2}C)g(\varphi)}\notag\\
&\hspace{1.5cm} \times \left[{_{3}}F_2\left(\begin{array}{cccccc}
           A^2, & B^2, & AB\varphi \\
                & A^2B^2, & A^2B^2\overline{C} 
            \end{array}\mid 4x(1-x) \right)\delta(AB\overline{C})\right.\notag\\
&\left.\hspace{1.5cm} + ~{_{3}}F_2\left(\begin{array}{cccccc}
            A^2, & B^2, & AB \\
                & A^2B^2, & C 
            \end{array}\mid 4x(1-x) \right)\delta(\overline{AB}C\varphi) \right] + I_1 + I_2 - I_3.
\end{align}
Finally employing \eqref{mt41eq3}, \eqref{mt41eq6}, and \eqref{mt41eq8} into \eqref{mt41eq9}, we complete the proof of the theorem. 
 \end{proof} 
 We now state a special case of Theorem \ref{MT41} which will be used to prove Theorem \ref{MT41C2} and to derive certain special values.
\begin{corollary}\label{MT41C1}
Let $A, B  \in \widehat{\mathbb{F}_q^{\times}}$ be such that $A^2, B^2, A\overline{B}\varphi\neq\varepsilon$. For $x\neq 1$ we have 
\begin{align}
  &{_{2}}F_1\left(\begin{array}{cccc}
            A^2, & B^2 \\
                 & AB\varphi
              \end{array}\mid x \right)^2
= \frac{qAB(4)g(\overline{A^2})}{g(\overline{B^2})g(\overline{A}B\varphi)^2}
{_{3}}F_2\left(\begin{array}{cccccc}
                       \hspace{-.1cm} A^2, & \hspace{-.1cm} B^2, & \hspace{-.1cm} AB \\
                            & \hspace{-.1cm} A^2B^2, & \hspace{-.1cm} AB\varphi 
                        \end{array}\mid 4x(1-x) \right)\notag\\
&+\frac{g(AB\varphi)g(\overline{AB}\varphi)g(A\overline{B}\varphi)}{q^2g(\overline{B^2})g(A^2)g(\overline{A}B\varphi)}\overline{AB}\varphi(x-x^2)
+\frac{q\overline{A^3B}\varphi(1-x)A\overline{B}\varphi(x)}{g(A^2)g(\overline{B^2})g(\overline{A}B\varphi)^2}\delta\left(\frac{1-2x}{(x-1)^2}\right)\notag\\
&+\frac{(q-1)\varphi(-1)g(A\overline{B})g(\overline{A}B)\overline{AB}(x-x^2)}{qg(A^2)g(\overline{B^2})g(\overline{A}B\varphi)^2}\delta(AB)\notag\\
&+\frac{(q-1)g(A\overline{B}\varphi)\overline{AB}\varphi(x-x^2)}{q^2g(A^2)g(\overline{B^2})g(\overline{A}B\varphi)}\left[\delta(AB\varphi)+qAB\varphi(-1)\right].\notag
\end{align}
\end{corollary}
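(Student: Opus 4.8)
The plan is to derive Corollary \ref{MT41C1} directly from Theorem \ref{MT41} by specializing $C = AB\varphi$. The first step is to check that this specialization is admissible and collapses the left-hand side: with $C = AB\varphi$ one has $A^2B^2\overline{C} = AB\varphi = C$, so the two ${_2}F_1$-series in Theorem \ref{MT41} become identical and their product is the square appearing in the corollary. Moreover, the hypotheses $A^2 \neq C$ and $B^2 \neq C$ of Theorem \ref{MT41} both translate to $A\overline{B}\varphi \neq \varepsilon$ (the characters $A\overline{B}\varphi$ and $\overline{A}B\varphi$ are mutually inverse, hence vanish simultaneously), matching the hypotheses $A^2, B^2, A\overline{B}\varphi \neq \varepsilon$ of the corollary.

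Next I would simplify every coefficient under $C = AB\varphi$ using the collapses $g(AB\overline{C}) = g(\varphi)$, $g(\overline{AB}C\varphi) = g(\varepsilon) = -1$, and $g(B^2\overline{C}) = g(\overline{A^2}C) = g(\overline{A}B\varphi)$. These turn the leading coefficient of the ${_4}F_3$-term into $-qAB(4)g(\overline{A^2})/\bigl(g(\overline{B^2})g(\overline{A}B\varphi)^2\bigr)$ and the coefficient of the bracketed ${_3}F_2$-correction into $(q-1)AB(4)g(\overline{A^2})/\bigl(g(\overline{B^2})g(\overline{A}B\varphi)^2\bigr)$.

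The heart of the argument is the reduction of the ${_4}F_3$-series. After specializing, its last upper parameter $AB\varphi$ coincides with its last lower parameter $A^2B^2\overline{C} = AB\varphi$, so in the defining character sum \eqref{Greene-def-4} the corresponding factor is ${AB\varphi\chi \choose AB\varphi\chi}$, which by \eqref{b5} equals $-\tfrac1q + \tfrac{q-1}{q}\delta(AB\varphi\chi)$. I would split the sum along this identity: the $-\tfrac1q$ piece reassembles into $-\tfrac1q$ times the ${_3}F_2$-series of the corollary, while the $\delta$-piece collapses to the single index $\chi = \overline{AB}\varphi$. Evaluating that residual term uses \eqref{b5} once more, since the factor ${AB\chi \choose AB\varphi\chi}$ at $\chi = \overline{AB}\varphi$ becomes ${\varphi \choose \varepsilon} = -\tfrac1q$, together with Lemma \ref{g8} applied to the two surviving binomials ${A\overline{B}\varphi \choose \overline{AB}\varphi}$ and ${\overline{A}B\varphi \choose AB\varphi}$; after the cancellation $AB(4)\,\overline{AB}\varphi(4) = 1$ this yields precisely the term $g(AB\varphi)g(\overline{AB}\varphi)g(A\overline{B}\varphi)\,\overline{AB}\varphi(x-x^2)/\bigl(q^2 g(\overline{B^2})g(A^2)g(\overline{A}B\varphi)\bigr)$ of the corollary.

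Finally I would account for the $\delta$-terms. Under $C = AB\varphi$ we have $\delta(AB\overline{C}) = \delta(\varphi) = 0$, which annihilates the first ${_3}F_2$-correction and several monomials inside the two bracketed $\delta$-expressions, while $\delta(\overline{AB}C\varphi) = \delta(AB\overline{C}\varphi) = \delta(\varepsilon) = 1$ activates the remaining ones. Combining the surviving ${_3}F_2$-correction (coefficient $(q-1)AB(4)g(\overline{A^2})/\bigl(g(\overline{B^2})g(\overline{A}B\varphi)^2\bigr)$) with the ${_3}F_2$ produced by the split (coefficient $AB(4)g(\overline{A^2})/\bigl(g(\overline{B^2})g(\overline{A}B\varphi)^2\bigr)$) gives the claimed coefficient $qAB(4)g(\overline{A^2})/\bigl(g(\overline{B^2})g(\overline{A}B\varphi)^2\bigr)$. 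The two bracketed $\delta$-terms then simplify using $ABC(-1) = \varphi(-1)$ on the support of $\delta(AB)$ and $A^2B^2(-1)\delta(AB\varphi) = \delta(AB\varphi)$, reproducing the $\delta(AB)$- and $[\delta(AB\varphi) + qAB\varphi(-1)]$-terms of the corollary; the $\delta\bigl(\tfrac{1-2x}{(1-x)^2}\bigr)$-term transfers unchanged since $(1-x)^2 = (x-1)^2$. I expect the main difficulty to be the bookkeeping in the ${_4}F_3$-split and in identifying exactly which monomials of the bracketed $\delta$-expressions survive the specialization; once these are isolated, the remaining manipulations are routine applications of \eqref{b5}, Lemma \ref{g8}, and the Gauss-sum collapses above.
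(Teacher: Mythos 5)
Your proposal is correct and follows essentially the same route as the paper's own proof: specialize $C=AB\varphi$ in Theorem \ref{MT41}, use \eqref{b5} to split the degenerate ${_4}F_3$ (whose last upper and lower parameters both equal $AB\varphi$) into $-\tfrac1q$ times the ${_3}F_2$ plus a residual term at $\chi=\overline{AB}\varphi$, evaluate that residue via Lemma \ref{g8}, and collect the surviving $\delta$-terms. All of your coefficient and sign computations match the paper's equations \eqref{mt41c1eq1}--\eqref{mt41c1eq3}.
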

  \begin{proof}
  The result is trivially true if $x=0$. So, let $x\neq 0$.
   Putting $C= AB\varphi$ in Theorem \ref{MT41} and using the fact that $g(\varepsilon)= -1$ we have
   \begin{align}\label{mt41c1eq1}
 &{_{2}}F_1\left(\begin{array}{cccc}
            A^2, & B^2 \\
                 & AB\varphi
              \end{array}\mid x \right)^2\notag\\ 
&= -\frac{qAB(4)g(\overline{A^2})}{g(\overline{B^2})g(\overline{A}B\varphi)^2} {_{4}}F_3\left(\begin{array}{cccccc}
    A^2, & B^2, & AB, & AB\varphi \\
        & A^2B^2, & AB\varphi, & AB\varphi
    \end{array}\mid 4x(1-x) \right)\notag\\
&+\frac{(q-1)AB(4)g(\overline{A^2})}{g(\overline{B^2})g(\overline{A}B\varphi)^2}{_{3}}F_2\left(\begin{array}{cccccc}
    A^2, & B^2, & AB \\
        & A^2B^2, & AB\varphi
    \end{array}\mid 4x(1-x) \right)\notag\\    
&+\frac{q\overline{A^3B}\varphi(1-x)A\overline{B}\varphi(x)}{g(A^2)g(\overline{B^2})g(\overline{A}B\varphi)^2}\delta\left(\frac{1-2x}{(1-x)^2}\right)
+\frac{(q-1)g(A\overline{B})g(\overline{A}B)\overline{AB}(x-x^2)}{q\varphi(-1)g(A^2)g(\overline{B^2})g(\overline{A}B\varphi)^2}\delta(AB)\notag\\ 
  &+ \frac{(q-1)g(A\overline{B}\varphi)\overline{AB}\varphi(x-x^2)}{q^2g(A^2)g(\overline{B^2})g(\overline{A}B\varphi)} \left[\delta(AB\varphi)+qAB\varphi(-1) \right].
\end{align}
  Using \eqref{Greene-def-4} and \eqref{b5} we have
  \begin{align}\label{mt41c1eq2}
&-\frac{qAB(4)g(\overline{A^2})}{g(\overline{B^2})g(\overline{A}B\varphi)^2}{_{4}}F_3\left(\begin{array}{cccccc}
                         A^2, & B^2, & AB, & AB\varphi \\
                           & A^2B^2, &  AB\varphi, & AB\varphi 
                        \end{array}\mid 4x(1-x) \right)\notag\\
 &= \frac{AB(4)g(\overline{A^2})}{g(\overline{B^2})g(\overline{A}B\varphi)^2}{_{3}}F_2\left(\begin{array}{cccccc}
                         A^2, & B^2, & AB \\
                           & A^2B^2, &  AB\varphi
                        \end{array}\mid 4x(1-x) \right) - I_1,                      
 \end{align}
 where
  \begin{align}\label{mt41c1eq3}
   I_1 &= \frac{qAB(4)g(\overline{A^2})}{g(\overline{B^2})g(\overline{A}B\varphi)^2}
   \sum_{\chi\in\widehat{\mathbb{F}_q^{\times}}}{A^2\chi\choose\chi}{B^2\chi\choose A^2B^2\chi}{AB\chi\choose AB\varphi\chi}\chi(4x-4x^2)\delta(AB\varphi\chi)\notag\\
   &= -\frac{g(AB\varphi)g(\overline{AB}\varphi)g(A\overline{B}\varphi)}{q^2g(\overline{B^2})g(A^2)g(\overline{A}B\varphi)}\overline{AB}\varphi(x-x^2).
  \end{align}
The last equality is obtained by putting $\chi= \overline{AB}\varphi$, and then using Lemma \ref{g8} and $g(\varepsilon)=-1$. 
Finally, combining \eqref{mt41c1eq1}, \eqref{mt41c1eq2} and \eqref{mt41c1eq3}, we complete the proof.
\end{proof}
 \begin{proof}[Proof of Theorem \ref{MT41C2}] From \cite[(4.33)]{greene-thesis}, we have 
\begin{align}\label{mt41c2eq1}
	{_{2}}F_1\left(\begin{array}{cccc}
		A^2, & B^2 \\
		& AB\varphi
	\end{array}\mid x \right)= \frac{B(-1)g(B^2)g(A\overline{B}\varphi)}{g(B)g(A\varphi)} {_{2}}F_1\left(\begin{array}{cccc}
		A, & B \\
		& AB\varphi
	\end{array}\mid 4x(1-x) \right).
\end{align}
Using the given conditions $x\neq 1, \frac{1}{2}$ and $AB, AB\varphi\neq \varepsilon$, Corollary \ref{MT41C1} yields
\begin{align}\label{revision-1}
&{_{2}}F_1\left(\begin{array}{cccc}
A^2, & \hspace{-.1cm}B^2 \\
&\hspace{-.1cm} AB\varphi
\end{array}\mid x \right)^2
= \frac{qAB(4)g(\overline{A^2})}{g(\overline{B^2})g(\overline{A}B\varphi)^2}
{_{3}}F_2\left(\begin{array}{cccccc}
A^2, & \hspace{-.1cm} B^2, &\hspace{-.1cm} AB \\
&\hspace{-.1cm} A^2B^2, &\hspace{-.1cm} AB\varphi 
\end{array}\mid 4x(1-x) \right)\notag\\
&+\frac{g(AB\varphi)g(\overline{AB}\varphi)g(A\overline{B}\varphi)}{q^2g(\overline{B^2})g(A^2)g(\overline{A}B\varphi)}\overline{AB}\varphi(x-x^2)
+\frac{(q-1)g(A\overline{B}\varphi)\overline{AB}\varphi(x-x^2)}{qg(A^2)g(\overline{B^2})g(\overline{A}B\varphi)}AB\varphi(-1).
\end{align}
Combining \eqref{mt41c2eq1} and \eqref{revision-1}, and then employing Lemma \ref{g1} we complete the proof.
 \end{proof}
 \begin{proof}[Proof of Theorem \ref{MT42}] The result is trivially true if $z=0$. Let $x\neq 0$.
 Putting $C= D^2$ and $B= D\overline{E}$ in Theorem \ref{MT41}, we have
 \begin{align}\label{mt42eq1}
  &{_{2}}F_1\left(\begin{array}{cccc}
            A^2, & D^2\overline{E^2} \\
                  & D^2
               \end{array}\mid x \right){_{2}}F_1\left(\begin{array}{cccc}
                                                                 A^2, & D^2\overline{E^2} \\
                                                                      & A^2\overline{E^2}\end{array}\mid x \right)\notag \\
&= {_{4}}F_3\left(\begin{array}{cccccc}
                                                 A^2, & D^2\overline{E^2}, & AD\overline{E}, & AD\overline{E}\varphi \\
                                                      & A^2D^2\overline{E^2}, & D^2, & A^2\overline{E^2} 
                                               \end{array}\mid 4x(1-x) \right)\notag\\                                               
 &\times \frac{qAD\overline{E}(4)g(\overline{A^2})g(A\overline{ED})g(\overline{A}ED\varphi)}{g(\overline{D^2}E^2)g(\overline{E^2})g(\overline{A^2}D^2)g(\varphi)} +\frac{q\overline{A^2D^2}(1-x)E^2(x)}{g(A^2)g(\overline{D^2}E^2)g(\overline{E^2})g(\overline{A^2}D^2)}\delta\left(\frac{1-2x}{(x-1)^2}\right).
 \end{align}
  Using \eqref{prop-300} we find that 
 \begin{align}\label{mt42eq2}
  &{_{2}}F_1\left(\begin{array}{cccc}
            A^2, & D^2\overline{E^2} \\
                 & D^2
               \end{array}\mid x \right){_{2}}F_1\left(\begin{array}{cccc}
                                                                 A^2, & D^2\overline{E^2} \\
                                                                      & A^2\overline{E^2}\end{array}\mid x \right)\notag \\
 &={D^2\overline{E^2}\choose A^2\overline{E^2}}{A^2\choose A^2\overline{E^2}}^{-1}{_{2}}F_1\left(\begin{array}{cccc}
             \hspace{-.12cm}A^2, &\hspace{-.12cm} D^2\overline{E^2} \\
                  &\hspace{-.12cm} D^2
               \end{array}\mid x \right){_{2}}F_1\left(\begin{array}{cccc}
                                                                \hspace{-.12cm} D^2\overline{E^2}, &\hspace{-.12cm} A^2 \\
                                                                      &\hspace{-.12cm} A^2\overline{E^2}\end{array}\mid x \right)\notag\\
&=\frac{g(D^2\overline{E^2})g(E^2)}{g(A^2)g(\overline{A^2}D^2)}\overline{A^2D^2}E^2(1-x)~
 {_{2}}F_1\left(\begin{array}{cccc}
             A^2, & E^2 \\
                  & D^2
               \end{array}\mid \frac{x}{x-1} \right)\notag\\
           &\times {_{2}}F_1\left(\begin{array}{cccc}
                            D^2\overline{E^2}, & \overline{E^2} \\
                            & A^2\overline{E^2}\end{array}\mid \frac{x}{x-1} \right).                                                                      
 \end{align}
 The last equality is obtained by using Lemma \ref{g8} and  Theorem \ref{thm7} (ii).
Now using \eqref{mt42eq2} in \eqref{mt42eq1} and Lemma \ref{g1} we have
\begin{align}
&{_{2}}F_1\left(\begin{array}{cccc}
             A^2, & E^2 \\
                  & D^2
               \end{array}\mid \frac{x}{x-1} \right){_{2}}F_1\left(\begin{array}{cccc}
                            D^2\overline{E^2}, & \overline{E^2} \\
                            & A^2\overline{E^2}\end{array}\mid \frac{x}{x-1} \right)\notag\\
&= \frac{AD\overline{E}(4)A^2D^2\overline{E^2}(1-x)g(A\overline{ED})g(\overline{A}ED\varphi)}{g(\varphi)}\notag\\
&\times {_{4}}F_3\left(\begin{array}{cccccc}
                A^2, & D^2\overline{E^2}, & AD\overline{E}, & AD\overline{E}\varphi \\
                    & A^2D^2\overline{E^2}, & D^2, & A^2\overline{E^2} 
                \end{array}\mid 4x(1-x) \right)\notag\\                                               
&+\frac{\overline{E^2}(1-x)E^2(x)}{q}\delta\left(\frac{1-2x}{(x-1)^2}\right).\notag                            
\end{align}
Finally, putting $z= \frac{x}{x-1}$, we complete the proof of the theorem.
\end{proof}
 \begin{proof}[Proof of Theorem \ref{MT43}]
  The result is trivially true if $x=0$. So, let $x\neq 0$. Let 
  \begin{align}
   L:= {_{2}}F_1\left(\begin{array}{cccc}
            A, & B \\
                  & C^2
               \end{array}\mid x \right){_{2}}F_1\left(\begin{array}{cccc}
                                                                        A, & C^2\overline{B} \\
                                                                             & C^2
\end{array}\mid x \right).\notag
  \end{align}
 Using Theorem \ref{thm7} (i) and (ii) we have
 \begin{align}\label{mt43eq1}
   L&= A(-1)\overline{A}(1-x){_{2}}F_1\left(\begin{array}{cccc}
                                  A, & B \\
                                     & C^2
                                 \end{array}\mid x \right)
                             {_{2}}F_1\left(\begin{array}{cccc}
                                                      A, & B \\
                                                         & AB\overline{C^2}
                                 \end{array}\mid \frac{1}{1-x} \right).
  \end{align}
Employing Lemma \ref{New-Lemma-1} into \eqref{mt43eq1} yields
   \begin{align}\label{mt43eq2}
   L&= \frac{\overline{A}(1-x)g(B)g(\overline{C^2})g(\overline{AB}C^2)}{qg(\overline{B})g(B\overline{C^2})g(\overline{A}C^2)}F_{4}\left(A;B;C^2,AB\overline{C^2};\frac{x^2}{x-1}, 1\right)^{*}+ I_1, 
  \end{align}
  where
  \begin{align}\label{mt43eq14}
I_1&=\frac{qAB(-1)\overline{A^2}B(1-x)\overline{C^2}(x)}{g(A)g(\overline{B})g(\overline{A}C^2)g(B\overline{C^2})}\delta\left(\frac{x-2}{x-1}\right).
\end{align}
Using \eqref{f4-star} in \eqref{mt43eq2} we obtain
   \begin{align}
   L&=\frac{\overline{A}(1-x)}{q(q-1)^2} \sum_{\chi, \lambda\in\widehat{\mathbb{F}_q^{\times}}}
   \frac{g(A\chi\lambda)g(B\chi\lambda)g(\overline{C^2\chi})g(\overline{AB}C^2\overline{\lambda})
 g(\overline{\lambda})g(\overline{\chi})}{g(A)g(\overline{B})g(B\overline{C^2})g(\overline{A}C^2)}\chi\left(\frac{x^2}{x-1}\right)+ I_1.\notag
  \end{align}
  Using Lemma \ref{g2} yields
  \begin{align}\label{mt43eq3}
   L&=\frac{\overline{A}(1-x)}{q(q-1)} \sum_{\chi\in\widehat{\mathbb{F}_q^{\times}}}
   \frac{g(A\chi)g(B\chi)g(\overline{C^2\chi})g(\overline{A}C^2\chi)g(\overline{B}C^2\chi)
  g(\overline{\chi})}{g(A)g(\overline{B})g(B\overline{C^2})g(\overline{A}C^2)g(C^2\chi^2)}\chi\left(\frac{x^2}{x-1}\right)+ I_1 + I_2,
  \end{align}
  where
   \begin{align}\label{mt43eq4}
      I_2&= AB(-1)\overline{A}(1-x) \sum_{\chi\in\widehat{\mathbb{F}_q^{\times}}}
      \frac{g(\overline{C^2\chi})g(\overline{\chi})}{g(A)g(\overline{B})g(B\overline{C^2})g(\overline{A}C^2)}\chi\left(\frac{x^2}{x-1}\right)\delta(C^2\chi^2)\notag\\
      &=\frac{qAB(-1)\overline{A}(1-x)\overline{C^2}(x)C(1-x)}{g(A)g(\overline{B})g(B\overline{C^2})g(\overline{A}C^2)}\left[\varphi(1-x) + 1\right].
    \end{align}
 The last equality is obtained by putting $\chi= \overline{C}, \overline{C}\varphi$, and then using Lemma \ref{g1} and the fact that $C^2\neq\varepsilon$.  
 Now using Lemma \ref{g3} in \eqref{mt43eq3} we have
  \begin{align}\label{mt43eq5}
    L&=\frac{\overline{A}(1-x)}{q^2(q-1)} \sum_{\chi\in\widehat{\mathbb{F}_q^{\times}}}
    \frac{g(A\chi)g(B\chi)g(\overline{C^2\chi})g(\overline{A}C^2\chi)g(\overline{B}C^2\chi)g(\overline{C^2\chi^2})
  g(\overline{\chi})}{g(A)g(\overline{B})g(B\overline{C^2})g(\overline{A}C^2)}\chi\left(\frac{x^2}{x-1}\right)\notag\\
  &\hspace{2cm}+ I_1 + I_2 - I_3,
    \end{align}
   where
   \begin{align}
    I_3&= \frac{\overline{A}(1-x)}{q^2} \sum_{\chi\in\widehat{\mathbb{F}_q^{\times}}}
   \frac{g(A\chi)g(B\chi)g(\overline{C^2\chi})g(\overline{A}C^2\chi)g(\overline{B}C^2\chi)
   g(\overline{\chi})}{g(A)g(\overline{B})g(B\overline{C^2})g(\overline{A}C^2)}\chi\left(\frac{x^2}{x-1}\right)\delta(\overline{C^2\chi^2})\notag\\
   &= \frac{\overline{A}(1-x)\overline{C}(x^2)C(x-1)}{q^2 g(A)g(\overline{B})g(B\overline{C^2})g(\overline{A}C^2)}
   \left[g(A\overline{C})g(B\overline{C})g(\overline{C})g(\overline{A}C)g(\overline{B}C)g(C)\right.\notag\\
   &\left. + g(A\overline{C}\varphi)g(B\overline{C}\varphi)g(\overline{C}\varphi)g(\overline{A}C\varphi)g(\overline{B}C\varphi)g(C\varphi)\varphi(x-1)\right].\notag
   \end{align}
 The last equality is obtained by putting $\chi= \overline{C}, \overline{C}\varphi$. Using Lemma \ref{g1} on 
 $g(A\overline{C})g(\overline{A}C)$, $g(B\overline{C})g(\overline{B}C)$, $g(A\overline{C}\varphi)g(\overline{A}C\varphi)$, $g(B\overline{C}\varphi)g(\overline{B}C\varphi)$, $g(C)g(\overline{C})$ and 
 $g(C\varphi)g(\overline{C}\varphi)$ with the fact that  $C^2\neq\varepsilon$, we have
 \begin{align}\label{mt43eq6}
 	I_3&= I_2 + \frac{(q-1)\overline{A}(1-x)\overline{C}(x^2)C(1-x)}{q g(A)g(\overline{B})g(B\overline{C^2})g(\overline{A}C^2)}\left[(q-1)\delta(A\overline{C}) 
 	\delta(B\overline{C})-qBC(-1)\delta(A\overline{C})\right.\notag\\
 	&\hspace{1cm}\left.-qBC(-1)\varphi(x-1)\delta(A\overline{C}\varphi) +(q-1)\varphi(1-x)\delta(A\overline{C}\varphi)\delta(B\overline{C}\varphi)\right.\notag\\
 	&\hspace{1cm}\left.
 	-qAC(-1)\delta(B\overline{C})
 	-qAC(-1)\varphi(x-1)\delta(B\overline{C}\varphi)\right].
  \end{align}
   Now using Lemma \ref{g10}  in \eqref{mt43eq5} we have
 \begin{align}
    L&=\frac{\overline{A}(1-x)\overline{C}(4)}{q^2(q-1)} \sum_{\chi\in\widehat{\mathbb{F}_q^{\times}}}
    \frac{g(A\chi)g(B\chi)g(\overline{C^2\chi})g(\overline{A}C^2\chi)g(\overline{B}C^2\chi)g(\overline{C\chi})g(\varphi\overline{C\chi})
  g(\overline{\chi})}{g(A)g(\overline{B})g(B\overline{C^2})g(\overline{A}C^2)g(\varphi)}\notag\\
&\hspace{3.5cm}\times \chi\left(\frac{x^2}{4(x-1)}\right)+I_1 + I_2 - I_3.\notag
    \end{align}
 Multiplying both numerator and denominator by $q^4g(\overline{A}C)g(\overline{B}C\varphi)\varphi(-1)$ and then rearranging the terms  we have
 \begin{align}\label{mt43eq7}
    L&=\frac{q^2\varphi(-1)\overline{C}(4)\overline{A}(1-x)g(\overline{A}C)g(\overline{B}C\varphi)}{(q-1)g(\varphi)g(\overline{A}C^2)g(\overline{B})} \notag\\
    &\times \sum_{\chi\in\widehat{\mathbb{F}_q^{\times}}}\left(
    \frac{g(A\chi)g(\overline{\chi})\chi(-1)}{qg(A)}\right)\left(
    \frac{g(B\chi)g(\overline{C^2\chi})\chi(-1)}{qg(B\overline{C^2})}\right)\left(
    \frac{g(\overline{A}C^2\chi)g(\overline{C\chi}) C\chi(-1)}{qg(\overline{A}C)}\right)\notag\\
    &\hspace{1cm}\times \left(
    \frac{g(\overline{B}C^2\chi)g(\varphi\overline{C\chi})\varphi C\chi(-1)}{qg(\overline{B}C\varphi)}\right)\chi\left(\frac{x^2}{4x-4}\right) + I_1 + I_2 - I_3.
   \end{align}
Using  Lemma \ref{g8} and the fact that $A, B\overline{C^2}\neq\varepsilon$ in \eqref{mt43eq7} we have
 \begin{align}
   L&=\frac{q^2\varphi(-1)\overline{C}(4)\overline{A}(1-x)g(\overline{A}C)g(\overline{B}C\varphi)}{(q-1)g(\varphi)g(\overline{A}C^2)g(\overline{B})}\sum_{\chi\in\widehat{\mathbb{F}_q^{\times}}}{A\chi\choose \chi}
   \left[{\overline{A}C^2\chi\choose C\chi} -\frac{q-1}{q}\delta(\overline{A}C)\right]\notag\\
   &\times {B\chi\choose C^2\chi}\left[{\overline{B}C^2\chi\choose C\varphi\chi}-\frac{q-1}{q}\delta(\overline{B}C\varphi)\right]\chi\left(\frac{x^2}{4x-4}\right) + I_1 + I_2 - I_3.\notag
 \end{align}
Employing \eqref{Greene-def-4} yields 
\begin{align}\label{mt43eq19}
 L&=\frac{q\varphi(-1)\overline{C}(4)\overline{A}(1-x)g(\overline{A}C)g(\overline{B}C\varphi)}{g(\varphi)g(\overline{A}C^2)g(\overline{B})}{_{4}}F_3\left(\begin{array}{cccccc}
                                                \hspace{-.1cm} A, &\hspace{-.1cm} B, &\hspace{-.1cm}  \overline{A}C^2, &\hspace{-.1cm} \overline{B}C^2 \\
                                                     &\hspace{-.1cm} C^2, &\hspace{-.1cm} C, &\hspace{-.1cm} C\varphi 
                                               \end{array}\mid \frac{-x^2}{4(1-x)}\right)\notag\\
&+ \frac{(q-1)\varphi(-1)\overline{C}(4)\overline{A}(1-x)g(\overline{A}C)g(\overline{B}C\varphi)}{g(\varphi)g(\overline{A}C^2)g(\overline{B})}\left[\frac{q-1}{q} {_{2}}F_1\left(\begin{array}{ccccccc}
	A, & B \\
	& C^2
\end{array}\mid \frac{-x^2}{4(1-x)} \right)\right.\notag\\
&\left.\times \delta(\overline{A}C)\delta(\overline{B}C\varphi)-{_{3}}F_2\left(\begin{array}{ccccccc}
               A, & B, & \overline{B}C^2 \\
                  & C^2, & C\varphi\end{array}\mid \frac{-x^2}{4(1-x)} \right)\delta(\overline{A}C)\right.\notag\\
                  &\left.- {{_3}}F_2\left(\begin{array}{ccccccc}
 	A, & B, & \overline{A}C^2 \\
 	& C^2, & C
 \end{array}\mid \frac{-x^2}{4(1-x)} \right)\delta(\overline{B}C\varphi)\right]+I_1 + I_2-I_3.
\end{align}
Finally combining \eqref{mt43eq14}, \eqref{mt43eq4}, \eqref{mt43eq6} and \eqref{mt43eq19}, we complete the proof. 
 \end{proof}
 \section{Values of Gaussian Hypergeometric Series}
 In this section, we will deduce the special values of Gaussian hypergeometric series. We first state two results of Greene on special values of Gaussian hypergeometric series.
 \begin{lemma}\emph{(\cite[(4.11)]{greene})}\label{g16}
   Let $A, B \in \widehat{\mathbb{F}_q^{\times}}$. Then we have
 \begin{align}
  {_{2}}F_1 \left(\begin{array}{ccc}
                        A, & B \\
                           & \overline{A}B 
                       \end{array}\mid -1\right)= \left\{
                                                 \begin{array}{ll}
                                                 0, & \hbox{if $B\neq\square$ ;} \\
                                                 {C\choose A} + {\varphi C\choose A}, & \hbox{if $B =C^{2}$.}
                                                 \end{array}
                                                 \right.\notag
 \end{align}
 \end{lemma}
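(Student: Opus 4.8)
The plan is to collapse the defining character sum into a single Jacobi sum by invoking Greene's integral representation of the hypergeometric function \cite{greene},
\[
{_{2}}F_1\left(\begin{array}{cc} A, & B \\ & C\end{array}\mid x\right)=\frac{BC(-1)}{q}\sum_{y\in\mathbb{F}_q}B(y)\,\overline{B}C(1-y)\,\overline{A}(1-xy),
\]
valid for $x\neq 0$. Specializing to $C=\overline{A}B$ and $x=-1$, the factor $\overline{B}C$ collapses to $\overline{A}$, so the product of the last two factors becomes $\overline{A}(1-y)\,\overline{A}(1+y)=\overline{A}\big((1-y)(1+y)\big)=\overline{A}(1-y^2)$, while the prefactor $BC(-1)$ becomes $B^2\overline{A}(-1)$. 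Thus the entire problem reduces to evaluating
\[
S:=\sum_{y\in\mathbb{F}_q}B(y)\,\overline{A}(1-y^2),\qquad\text{so that}\qquad {_{2}}F_1\left(\begin{array}{cc} A, & B \\ & \overline{A}B\end{array}\mid -1\right)=\frac{B^2\overline{A}(-1)}{q}\,S.
\]

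First I would dispose of the non-square branch by symmetry. Replacing $y$ by $-y$ and using $\overline{A}(1-(-y)^2)=\overline{A}(1-y^2)$ gives $S=B(-1)S$. If $B\neq\square$, then Lemma \ref{sq-2} yields $B(-1)=-1$, whence $2S=0$ and therefore $S=0$ since $q$ is odd; this is exactly the first case of the statement.

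For the square branch $B=C^2$ I would reparametrize by $u=y^2$. Using that the number of $y\in\mathbb{F}_q$ with $y^2=u$ equals $1+\varphi(u)$ together with $C^2(y)=C(y^2)=C(u)$, the sum splits as
\[
S=\sum_{u\in\mathbb{F}_q}\big(1+\varphi(u)\big)C(u)\,\overline{A}(1-u)=J(C,\overline{A})+J(C\varphi,\overline{A}).
\]
Rewriting each Jacobi sum as a binomial coefficient through \eqref{eq-0}, namely $J(C,\overline{A})=qA(-1){C\choose A}$ and $J(C\varphi,\overline{A})=qA(-1){C\varphi\choose A}$, and substituting into $\frac{B^2\overline{A}(-1)}{q}\,S$, the scalar prefactors cancel because $B^2(-1)=1$ and $\overline{A}(-1)A(-1)=1$; what remains is ${C\choose A}+{\varphi C\choose A}$, as required.

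The routine content is the character bookkeeping, and the one place that genuinely demands care is the correct normalization of the integral representation together with the tracking of every evaluation at $-1$: a single stray $B(-1)$ or $\varphi(-1)$ would corrupt the final constant. I expect that to be the only real obstacle, since the remaining ingredients---the symmetry $y\mapsto -y$ and the square-counting identity $\#\{y:y^2=u\}=1+\varphi(u)$---are entirely standard.
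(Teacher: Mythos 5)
Your proof is correct, but note that the paper itself contains no proof of this lemma at all: it is quoted verbatim from Greene \cite[(4.11)]{greene}, so there is no internal argument to compare against. Your derivation is a legitimate self-contained substitute, and every delicate point checks out. The integral representation you invoke is Greene's Definition 3.5 together with his Theorem 3.6, which holds for all characters and all $x$, hence agrees with the paper's definition \eqref{Greene-def-4} at $x=-1$; the specialization $C=\overline{A}B$ correctly collapses $\overline{B}C$ to $\overline{A}$ and the prefactor to $B^2\overline{A}(-1)$. The non-square branch is handled exactly by the paper's own Lemma \ref{sq-2} (note that $2S=0$ forces $S=0$ simply because $S\in\mathbb{C}$; the parity of $q$ is irrelevant there). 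In the square branch, the count $\#\{y:y^2=u\}=1+\varphi(u)$ is consistent even at $u=0$ under the convention $\varphi(0)=0$, and the conversion $J(C,\overline{A})=qA(-1)\binom{C}{A}$ follows from \eqref{eq-0}; the final prefactor indeed collapses since $B^2(-1)=1$ and $A\overline{A}(-1)=1$. One stylistic remark: since the lemma being proved is itself Greene's, leaning on Greene's integral representation is unobjectionable, but it is worth being aware that you are importing a result (Theorem 3.6 of \cite{greene}) that the paper never states; a referee checking your proof against this paper alone would need to consult \cite{greene} for that one ingredient.
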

 \begin{lemma}\emph{(\cite[(4.14)]{greene})}\label{g18}
 Let $A, B \in \widehat{\mathbb{F}_q^{\times}}$. Then we have
  \begin{align}
   {_{2}}F_1 \left(\begin{array}{ccc}
                        A, & B \\
                           & A^2 
                       \end{array}\mid 2\right)
 =A(-1)\left\{
       \begin{array}{ll}
       0, & \hbox{if $B\neq\square$ ;} \\
     {C\choose A} + {\varphi C\choose A}, & \hbox{if $B =C^{2}$.}
     \end{array}
     \right.\notag                      
 \end{align}
 \end{lemma}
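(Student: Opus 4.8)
The plan is to reduce the value at $x=2$ to the value at $x=-1$ supplied by Lemma \ref{g16}, using a single application of the first transformation of Theorem \ref{thm7}. Taking $C=A^2$ and $x=2$ in Theorem \ref{thm7}(i) gives
\begin{align}
{_{2}}F_1\left(\begin{array}{cc} A, & B \\ & A^2 \end{array}\mid 2\right) &= A(-1)\,{_{2}}F_1\left(\begin{array}{cc} A, & B \\ & AB\overline{A^2} \end{array}\mid -1\right)\notag\\
&\quad + A(-1){B \choose \overline{A}A^2}\delta(-1) - {B \choose A^2}\delta(2).\notag
\end{align}
Since $p$ is an odd prime, both $-1$ and $2$ are nonzero in $\mathbb{F}_q$, so the point-evaluations $\delta(-1)$ and $\delta(2)$ of the $\mathbb{F}_q$-valued delta function vanish, and the two boundary terms drop out. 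Simplifying the lower parameter via $AB\overline{A^2}=\overline{A}B$, I obtain the clean identity
$${_{2}}F_1\left(\begin{array}{cc} A, & B \\ & A^2 \end{array}\mid 2\right) = A(-1)\,{_{2}}F_1\left(\begin{array}{cc} A, & B \\ & \overline{A}B \end{array}\mid -1\right).$$

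With this reduction in hand, the remaining step is to quote Lemma \ref{g16} verbatim: it evaluates ${_{2}}F_1(A,B;\overline{A}B\mid -1)$ as $0$ when $B$ is a nonsquare and as ${C\choose A}+{\varphi C\choose A}$ when $B=C^2$. Multiplying each case through by the factor $A(-1)$ produced above yields exactly the two-case formula asserted in the statement, which completes the argument. Notably, this reduction also explains structurally why the right-hand side here is literally $A(-1)$ times the right-hand side of Lemma \ref{g16}.

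There is no serious obstacle in this approach; it is a two-step reduction whose only delicate point is confirming that the $\delta$-contributions in Theorem \ref{thm7}(i) genuinely disappear. That confirmation rests solely on the standing hypothesis that $p$ is odd, which guarantees $2\neq 0$ in $\mathbb{F}_q$ (the value $-1\neq 0$ being automatic). I would flag that in characteristic $2$ the evaluation at $x=2$ would degenerate, so the oddness of $p$ is genuinely used rather than incidental. Otherwise the proof is fully general, requiring no restrictions on the characters $A$ and $B$ beyond those already built into Theorem \ref{thm7} and Lemma \ref{g16}.
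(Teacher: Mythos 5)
Your proof is correct. Note that the paper itself does not prove this lemma at all — it is stated with a citation to Greene \cite[(4.14)]{greene} — so there is no in-paper argument to compare against. Your two-step derivation (apply Theorem \ref{thm7}(i) with $C=A^2$, $x=2$, observe that both $\delta$-terms vanish because $-1\neq 0$ always and $2\neq 0$ for odd $p$, so the lower parameter becomes $AB\overline{A^2}=\overline{A}B$ and the argument becomes $1-2=-1$, then quote Lemma \ref{g16}) is exactly the natural way to deduce the $x=2$ evaluation from the $x=-1$ evaluation, and it is essentially how the result is obtained in Greene's original work. It holds with no restrictions on $A$ and $B$ beyond those in the quoted results, and it has the added benefit of making the pair of evaluation lemmas self-contained within the paper: Lemma \ref{g18} is literally $A(-1)$ times Lemma \ref{g16}, as your reduction makes structurally transparent.
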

 \begin{proof}[Proof of Theorem \ref{Value-41}] Putting $B=A\chi_4, C= A^4$  in Theorem \ref{MT41} and then using Lemma \ref{g1} we have
   \begin{align}\label{Value41eq-1}
&{_{4}}F_3\left(\begin{array}{cccccc}
            A^2, & A^2\varphi, & A^2\chi_4, & A^2\overline{\chi_4} \\
                 & A^4\varphi, & A^4, & \varphi 
            \end{array}\mid 4x(1-x) \right)\notag\\
&= \frac{\overline{A^2\chi_4}(4)g(A^2)g(\varphi)g(\overline{A^2}\varphi)^2}{qg(\overline{A^2})g(A^2\chi_4)g(\overline{A^2}\chi_4)}
{_{2}}F_1\left(\begin{array}{cccc}
                             A^2, & A^2\varphi \\
                                & A^4
                         \end{array}\mid x \right)
 {_{2}}F_1\left(\begin{array}{cccc}
         A^2, & A^2\varphi \\
             & \varphi
     \end{array}\mid x \right)\notag\\
&-\frac{A^2\varphi(x)\overline{A^2\chi_4}(4)\overline{A^6}(1-x)g(\varphi)}{qg(A^2\chi_4)g(\overline{A^2}\chi_4)}\delta\left(\frac{1-2x}{(1-x)^2}\right).
   \end{align}
Using Theorem \ref{thm7} (i) we have
\begin{align}\label{Value41eq-2}
&{_{2}}F_1\left(\begin{array}{cccc}
	A^2, & A^2\varphi \\
	& A^4
\end{array}\mid x \right)
{_{2}}F_1\left(\begin{array}{cccc}
	A^2, & A^2\varphi \\
	& \varphi
\end{array}\mid x \right)\notag\\
&={_{2}}F_1\left(\begin{array}{cccc}
A^2, & A^2\varphi \\
& \varphi
\end{array}\mid (1-x) \right)
{_{2}}F_1\left(\begin{array}{cccc}
A^2, & A^2\varphi \\
& \varphi
\end{array}\mid x \right)\notag\\
&= \frac{J(A^2\varphi, \overline{A^2})^2}{q^2}\left(\frac{1+\varphi(1-x)}{2}\right)\notag\\
&\times\left(\frac{1+\varphi(x)}{2}\right)
\left(\overline{A^4}(1+\sqrt{1-x})+ \overline{A^4}(1-\sqrt{1-x})\right)\left(\overline{A^4}(1+\sqrt{x})+ \overline{A^4}(1-\sqrt{x})\right).
\end{align}
 The last equality obtained by using \eqref{relation-Fu-Greene} and  Lemma \ref{33}. Finally, using \eqref{Value41eq-2}, \eqref{Value41eq-1}, 
 Lemma \ref{gj1} and Lemma \ref{g1} we complete the proof of (i).
 Replacing $x$ by $\frac{x}{x-1}$ in Theorem \ref{Value-41} (i), we complete the proof of (ii).
 \end{proof}
 \begin{proof}[Proof of Theorem \ref{Value-44}]
  Putting $x=-1$, $B= A^3\varphi$ in Corollary \ref{MT41C1} and then  using Lemma \ref{g1} on $g(A^2)g(\overline{A^2})$ and $g(A^4)g(\overline{A^4})$  we have
  \begin{align}\label{Value44eq1}
&{_{3}}F_2\left(\begin{array}{ccccccc}
              A^2, & A^6, & A^4\varphi \\
                   & A^8, & A^4
        \end{array}\mid -8\right)= \frac{\overline{A}(256)g(A^2)^2g(\overline{A^6})}{qg(\overline{A^2})}{_{2}}F_1\left(\begin{array}{ccccccc}
                                                A^2, & A^6 \\
                                                     & A^4 
                                              \end{array}\mid -1\right)^2 \notag\\
&\hspace{2cm}- \frac{\overline{A}(4096)}{q}- \frac{(q-1)}{q^3}\varphi(2)\overline{A}(4096)g(\overline{A^2}\varphi)g(A^2\varphi)\delta(A^4\varphi).    \end{align}
We complete the proof by combining Lemma \ref{g16} and \eqref{Value44eq1}.
 \end{proof}
 \begin{proof}[Proof of Theorem \ref{Value-45}]
  Putting $A=\chi_4$ and  $x=\frac{1+\sqrt{2}}{2}$ in Corollary \ref{MT41C1} and then using Lemma \ref{g1} we have 
  \begin{align}\label{Value45eq1}
  &{_{3}}F_2\left(\begin{array}{ccccccc}
              \varphi, & B^2, & B\chi_4 \\
                   & B^2\varphi, & B\overline{\chi_4} 
         \end{array}\mid -1\right)\notag\\
         &= \frac{\overline{B\chi_4}(4)g(\overline{B^2})g(B\chi_4)^2}{qg(\varphi)}{_{2}}F_1 \left(\begin{array}{ccc}
                 \varphi, & B^2 \\
                 & \overline{\chi_4}B   
                 \end{array}\mid \frac{1+\sqrt{2}}{2}\right)^2-\frac{B\chi_4(-1)}{q},            
\end{align}
where $B\neq \varepsilon, \varphi, \chi_4, \overline{\chi_4}$. From \cite[Thm. 1.11]{TB}, we have 
  \begin{align}\label{g20}
 &{_{2}}F_1 \left(\begin{array}{ccc}
                        \varphi, & B^2 \\
                             & \overline{\chi_4}B   
                         \end{array}\mid \frac{1\pm\sqrt{2}}{2}\right)\notag\\
 &=\frac{\chi_4(4)B(-4)g(\overline{\chi_4B})g(\chi_4)g(B\varphi)}{qg(\varphi)}
 \left\{
 \begin{array}{ll}
 0, & \hbox{if $B\neq\square$ ;} \\
 \displaystyle {D\choose \chi_4} + {D\varphi \choose \chi_4}, & \hbox{if $B= D^2$}.
 \end{array}
 \right.
 \end{align}
 Since $q\equiv 1 \pmod{8}$, we have $\chi_4(4)=\varphi(2)=1$. Now, combining \eqref{g20}, \eqref{Value45eq1}, and Lemma \ref{g1} we find that 
 \begin{align}\label{Value45eq2}
 	M&:= {_{3}}F_2\left(\begin{array}{ccccccc}
 		\varphi, & B^2, & B\chi_4 \\
 		& B^2\varphi, & B\overline{\chi_4} 
 	\end{array}\mid -1\right)\notag\\
 	&= -\frac{B(-1)}{q}+\frac{B(4)}{q^2g(\varphi)}g(\chi_4)^2g(B\varphi)^2g(\overline{B^2})
 	\left\{
 	\begin{array}{ll}
 		0, & \hbox{if $B\neq\square$ ;} \\
 		\displaystyle\left[{D\choose \chi_4} + {D\varphi \choose \chi_4}\right]^2, & \hbox{if $B = D^{2}$.}
 	\end{array}
 	\right.   
 \end{align}
 When $B= D^2$ we have 
  \begin{align}
 	M&= -\frac{1}{q}+\frac{D^2(4)}{q^2g(\varphi)}g(\chi_4)^2g(D^2\varphi)^2g(\overline{D^4})\left[{D\choose \chi_4} + {D\varphi \choose \chi_4}\right]^2\notag\\
 	 &= -\frac{1}{q}+\frac{g(\chi_4)g(D\chi_4)g(D\overline{\chi_4})g(\overline{D})g(\overline{D}\varphi)}{q^2g(\overline{\chi_4})}\left[{D\choose \chi_4} + {D\varphi \choose \chi_4}\right]^2.\notag
 \end{align}
The last equality is obtained by using Lemma \ref{g10} on $g(D^2\varphi)$ and Lemma \ref{HD2} on $g(\overline{D^4})$. Employing Lemma \ref{g8} and Lemma \ref{g1} we find that
 \begin{align}\label{Value45eq3}
	M&= \frac{D(-1)g(D)g(\overline{D}\varphi)g(D\chi_4)}{q^2g(D\overline{\chi_4})} + \frac{D(-1)g(\overline{D})g(D\varphi)g(D\overline{\chi_4})}{q^2g(D\chi_4)} + \frac{1}{q}.
\end{align}
Now using Lemma \ref{gj1} and Lemma \ref{g1} we have 
\begin{align}\label{Value45eq4}
	J(D, \varphi)J(\overline{D}\chi_{4}, \varphi)= \frac{qg(D)g(\overline{D}\chi_{4})}{g(D\varphi)g(\overline{D\chi_4})}.
\end{align}
 Using $\overline{g(A)}= A(-1)g(\overline{A})$, \eqref{Value45eq4} yields
\begin{align}\label{Value45eq4e1}
 \overline{J(D, \varphi)J(\overline{D}\chi_{4}, \varphi)}= \frac{qg(\overline{D})g(D\overline{\chi_{4}})}{g(\overline{D}\varphi)g(D\chi_4)}.
\end{align}
Combining \eqref{Value45eq4e1}, \eqref{Value45eq4}, \eqref{Value45eq3} and Lemma \ref{g1} we find that
\begin{align}\label{Value45eq6}
	M= \frac{1}{q} + \frac{2}{q^2}Re(J(D, \varphi)J(\overline{D}\chi_{4}, \varphi)),
\end{align}
where $B=D^2$. By Lemma \ref{sq-2} we have $B(-1)=-1$ if $B$ is not a square. Now, using \eqref{Value45eq6} in \eqref{Value45eq2} we have
\begin{align}
	&{_{3}}F_2\left(\begin{array}{ccccccc}
		\varphi, & B^2, & B\chi_4 \\
		& B^2\varphi, & B\overline{\chi_4} 
	\end{array}\mid -1\right)= \left\{
	\begin{array}{ll}
		\displaystyle\frac{1}{q}, & \hbox{if $B\neq\square$ ;} \vspace{.12cm}\\
		\displaystyle\frac{1}{q} + \frac{2}{q^2}Re(J(D, \varphi)J(\overline{D}\chi_{4}, \varphi)), & \hbox{if $B = D^{2}$.}
	\end{array}
	\right.\notag   
\end{align}
Clearly, $B\neq \varepsilon, \varphi, \chi_4, \overline{\chi_4}$ if and only if $B\overline{\chi_4}\neq \varepsilon, \varphi, \chi_4, \overline{\chi_4}$. We complete the proof of the theorem by 
putting $B= C\chi_4$.
 \end{proof}
\begin{proof}[Proof of Theorem \ref{Value-46}]
 Let $S$ be a multiplicative character which is a square, and let its order be strictly greater than $4$. Putting  $A= \sqrt{\overline{S^3}}$, 
 $B=\sqrt{S}$ and  $x=\frac{2+\sqrt{3}}{4}$ in Corollary \ref{MT41C1} and then using Lemma \ref{g1} we have
 \begin{align}\label{Value46eq1}
 &{_{3}}F_2\left(\begin{array}{ccccccc}
             \hspace{-.1cm} \overline{S^3}, &\hspace{-.2cm} S, &\hspace{-.2cm} \overline{S} \\
                   &\hspace{-.2cm} \overline{S^2}, &\hspace{-.2cm} \overline{S}\varphi 
         \end{array}\mid \frac{1}{4}\right)=\frac{S(4)g(\overline{S})g(S^2\varphi)^2}{qg(S^3)}
         {_{2}}F_1\left(\begin{array}{ccccccc}
             \hspace{-.1cm} \overline{S^3}, & \hspace{-.2cm}S \\
                   &\hspace{-.2cm} \overline{S}\varphi 
         \end{array}\mid \frac{2+\sqrt{3}}{4}\right)^2 -\frac{\overline{S}(4)}{q}.      
 \end{align}
From \cite[Thm. 1.10]{TB}, we have
 \begin{align}\label{g22}
&{_{2}}F_1 \left(\begin{array}{ccc}
                       \overline{S^3}, & \overline{S^2}\varphi \\
                            & \overline{S^4}  
                        \end{array}\mid \frac{4}{2\pm\sqrt{3}}\right)
=\frac{S^3(\sqrt{3})S(16)g(\overline{S^2}\varphi)g(\sqrt{S})}{S^3(\sqrt{3}\pm 2)g(\varphi)g(\sqrt{\overline{S^3}})}\notag\\
&\hspace{.5cm}\times\left\{
\begin{array}{ll}
0, & \hbox{if $q\equiv 11 \pmod{12}$;} \\
\displaystyle\frac{S(8)\overline{S}(27)J(\sqrt{\overline{S}},\sqrt{S^3}\varphi)}{J(\varphi,S)}\left[{S\choose \chi_3} + {S \choose \chi_3^2}\right], & \hbox{if $q\equiv  1 \pmod{12}.$}
\end{array}
\right.
\end{align}
Employing Lemma \ref{g21} into  \eqref{g22}, and then using Lemma \ref{g1}, we deduce from \eqref{Value46eq1} that
\begin{align}\label{Value46eq2}
	&M:= {_{3}}F_2\left(\begin{array}{ccccccc}
		\overline{S^3}, & S, & \overline{S} \\
		& \overline{S^2}, & \overline{S}\varphi 
	\end{array}\mid \frac{1}{4}\right)= -\frac{\overline{S}(4)}{q} +\frac{\varphi(-1)g(\overline{S})g(\sqrt{S})^2J(\sqrt{\overline{S}},\sqrt{S^3}\varphi)^2}{S(27)\overline{S}(16)g(S^3)g(\sqrt{\overline{S^3}})^2J(\varphi,S)^2}\notag\\
&\hspace{2cm}\times\left\{
\begin{array}{ll}
0, & \hbox{if $q\equiv 11 \pmod{12}$;} \\
\displaystyle\left[{S\choose \chi_3} + {S \choose \chi_3^2}\right]^2, & \hbox{if $q\equiv  1 \pmod{12}.$}
\end{array}
\right.       
\end{align}  
Using Lemma \ref{gj1} and Lemma \ref{g1} in \eqref{Value46eq2}, for $q\equiv  1 \pmod{12}$, we have
\begin{align}
	&M= -\frac{\overline{S}(4)}{q} +\frac{qS(16)\overline{S}(27)g(\overline{S})g(\sqrt{S^3}\varphi)^2}{g(S^3)g(\sqrt{\overline{S^3}})^2 g(S)^2}
	\left[{S\choose \chi_3} + {S \choose \chi_3^2}\right]^2.\notag  
\end{align}	  
Using Lemma \ref{g10} on $g(\sqrt{\overline{S^3}})$, and then employing Lemma \ref{g1} we find that 
\begin{align}
	&M= -\frac{\overline{S}(4)}{q} +\frac{q \overline{S}(4)\overline{S}(27)g(\overline{S})}{g(\overline{S^3})g(S)^2}
	\left[{S\choose \chi_3} + {S \choose \chi_3^2}\right]^2.\notag  
\end{align}	 
Lemma \ref{g8}, Lemma \ref{HD1}, and Lemma \ref{g1} yield
 \begin{align}\label{Value46eq3}
 	&M= \frac{\overline{S}(4)}{q^2}\left[q+ \frac{q g(\chi_3^2)^2}{g(\overline{S}\chi_3^2)g(S\chi_3^2)}
 	+ \frac{q g(\chi_3)^2}{g(\overline{S}\chi_3)g(S\chi_3)}\right]. 
 \end{align}
Using Lemma \ref{gj1} and Lemma \ref{g10} we have 
\begin{align}\label{Value46eq4}
	J(S, \chi_{3})J(\overline{S}, \chi_{3})= \frac{qg(\chi_{3})^2}{g(S\chi_3)g(\overline{S}\chi_3)}.
\end{align}
Using $\overline{g(A)}= A(-1)g(\overline{A})$, \eqref{Value46eq4} yields 
\begin{align}\label{Value46eq4e1}
 \overline{J(S, \chi_{3})J(\overline{S}, \chi_{3})}= \frac{qg(\chi_{3}^2)^2}{g(\overline{S}\chi_{3}^2)g(S\chi_{3}^2)}.
\end{align}
 Now, employing \eqref{Value46eq4e1}, \eqref{Value46eq4} and \eqref{Value46eq3} into \eqref{Value46eq2} we have
\begin{align}\label{Value46eq5}
	& {_{3}}F_2\left(\begin{array}{ccccccc}
		\overline{S^3}, & S, & \overline{S} \\
		& \overline{S^2}, & \overline{S}\varphi 
	\end{array}\mid \frac{1}{4}\right)\notag\\
	&= \left\{
	\begin{array}{ll}
		-\frac{\overline{S}(4)}{q}, & \hbox{if $q\equiv 11 \pmod{12}$;} \\
		\displaystyle\frac{\overline{S}(4)}{q}\left[q+ 2Re(J(S, \chi_{3})J(\overline{S}, \chi_{3}))\right], & \hbox{if $q\equiv  1 \pmod{12}.$}
	\end{array}
	\right.       
\end{align} 
Using \eqref{Greene-def-4}, Lemma \ref{g8}, and Lemma \ref{g1} we find that  
	\begin{align}\label{Value46eq6}
		& {_{3}}F_2\left(\begin{array}{ccccccc}
			\overline{S^3}, & S, & \overline{S} \\
			& \overline{S^2}, & \overline{S}\varphi 
		\end{array}\mid \frac{1}{4}\right)= {_{3}}F_2\left(\begin{array}{ccccccc}
			S, & \overline{S^3}, & \overline{S} \\
			& \overline{S^2}, & \overline{S}\varphi 
		\end{array}\mid \frac{1}{4}\right).
	\end{align}
Combining \eqref{Value46eq5} and \eqref{Value46eq6}, and then putting $S= \overline{C}$ we complete the proof of the theorem.
\end{proof}
\begin{remark}
 By Lemma \ref{sq-1} we have $\varphi(-1)=-1$ if $q\equiv 11\pmod{12}$ and $\varphi(-1)=1$ if $q\equiv 1\pmod{12}$. 
 To deduce Theorem \ref{Value-46} from \cite[Theorem 1.3]{EG-1}, we need to use the values of $\varphi(-1)$ accordingly.
\end{remark}
\begin{proof}[Proof of Theorem \ref{Value-43}]
  Putting $ B= \overline{A}, C= A^4$ and $x= 2$ in Theorem \ref{MT41} we have
  \begin{align}\label{Value43eq-1}
   &{_{2}}F_1\left(\begin{array}{cccc}
         A^2, & \overline{A^2} \\
             & A^4
             \end{array}\mid 2 \right) {_{2}}F_1\left(\begin{array}{cccc}
                                                       A^2, & \overline{A^2} \\
                                                             & \overline{A^4}
                                                                    \end{array}\mid 2 \right)\notag\\
 &= \frac{qg(\overline{A^2})g(\overline{A^4})g(A^4\varphi)}{g(A^2)^2g(\overline{A^6})g(\varphi)}{_{4}}F_3\left(\begin{array}{cccccc}
                     A^2, & \overline{A^2}, & \varepsilon, & \varphi \\
                          & \varepsilon, & A^4, & \overline{A^4} 
                     \end{array}\mid -8 \right)\notag\\
&-\frac{(q-1)g(\overline{A^2})g(\overline{A^4})g(A^4\varphi)}{g(A^2)^2g(\overline{A^6})g(\varphi)}{_{3}}F_2\left(\begin{array}{cccc}
         A^2, & \overline{A^2}, & \varphi\\
             & \varepsilon, & \overline{A^4}
             \end{array}\mid -8 \right)\delta(\overline{A^4}) \notag\\
             &+ \frac{(q-1)g(\overline{A^2})}{q^2g(A^2)^2g(\overline{A^6})}\left[\delta(\overline{A^4})+q\right].
 \end{align}            
From \eqref{Greene-def-4}, \eqref{b5} and using \eqref{b6} on ${A^2\chi\choose\chi}{\chi\choose A^4\chi}$ with the fact that $A^2\neq\varepsilon$, we obtain 
\begin{align}\label{Value43eq-3}
 {_{4}}F_3\left(\begin{array}{cccccc}
                    \hspace{-.1cm} A^2, & \hspace{-.2cm} \overline{A^2}, &\hspace{-.2cm} \varepsilon, &\hspace{-.2cm} \varphi \\
                          &\hspace{-.2cm} \varepsilon, &\hspace{-.2cm} A^4, &\hspace{-.2cm} \overline{A^4} 
                     \end{array}\mid -8 \right)&= \frac{q}{q-1} \sum_{\chi\in\widehat{\mathbb{F}_q^{\times}}}{A^2\chi\choose\chi}{\overline{A^2}\chi\choose\chi}{\chi\choose A^4\chi}{\varphi\chi\choose\overline{A^4}\chi}\chi(-8)\notag\\
                     &={\overline{A^2}\choose\overline{A^4}}{_{3}}F_2\left(\begin{array}{cccccc}
                     \hspace{-.1cm} \overline{A^2}, &\hspace{-.2cm} A^2 , &\hspace{-.2cm} \varphi \\
                           &\hspace{-.2cm} A^4, &\hspace{-.2cm} \overline{A^4} 
                     \end{array}\mid -8 \right)+\frac{1}{q^2}{\varphi\choose\overline{A^4}}.
\end{align}
Using \eqref{prop-300} and the fact that $A^2\neq\varepsilon$ we have 
\begin{align}\label{Value43eq-4}
 {_{2}}F_1\left(\begin{array}{cccc}
                A^2, & \overline{A^2} \\
                    & \overline{A^4}
              \end{array}\mid 2 \right)&= {\overline{A^2}\choose\overline{A^4}}{A^2\choose\overline{A^4}}^{-1}{_{2}}F_1\left(\begin{array}{cccc}
                \overline{A^2}, & A^2 \\
                    & \overline{A^4}
              \end{array}\mid 2 \right)\notag\\
              &=\frac{g(\overline{A^2})g(A^6)}{g(A^2)^2}{_{2}}F_1\left(\begin{array}{cccc}
                \overline{A^2}, & A^2 \\
                    & \overline{A^4}
              \end{array}\mid 2 \right).
\end{align}
The last equality is obtained by using Lemma \ref{g8}. Finally, employing \eqref{Value43eq-4}, \eqref{Value43eq-3}, Lemma \ref{g1} and Lemma \ref{g18} into \eqref{Value43eq-1} 
we complete the proof.
 \end{proof}
\begin{proof}[Proof of Theorem \ref{MTV43C1}]
Using Lemma \ref{g8} and Lemma \ref{g1} in Theorem \ref{Value-43} we have
\begin{align}\label{mtv43c1eq1}
&{_{3}}F_2\left(\begin{array}{ccccccc}
\overline{A^2}, & A^2 , & \varphi \\
& A^4, & \overline{A^4}
\end{array}\mid -8\right)
\notag\\
&= \frac{g(\varphi)g(A^2)}{qg(\overline{A^2})g(A^4\varphi)} + \frac{g(A)g(A^2)g(\varphi)g(\overline{A}\varphi)}{qg(\overline{A^2})g(A^3)g(\varphi\overline{A^3})g(A^4\varphi)}
+ \frac{g(\overline{A})g(A^2)g(A\varphi)g(\varphi)}{qg(\overline{A^2})g(\overline{A^3})g(\varphi A^3)g(A^4\varphi)}.
\end{align}
Now \eqref{Greene-def-4} and Lemma \ref{g8} yield
\begin{align}\label{mtv43c1eq2}
{_{3}}F_2\left(\begin{array}{ccccccc}
\overline{A^2}, & A^2 , & \varphi \\
& A^4, & \overline{A^4}
\end{array}\mid -8\right)= \frac{g(\varphi)g(A^2)}{g(\overline{A^2})g(A^4\varphi)}{_{3}}F_2\left(\begin{array}{ccccccc}
\varphi, & A^2 , & \overline{A^2} \\
& A^4, & \overline{A^4}
\end{array}\mid -8\right).
\end{align}
Combining \eqref{mtv43c1eq2} and \eqref{mtv43c1eq1} we have
\begin{align}\label{mtv43c1eq3}
{_{3}}F_2\left(\begin{array}{ccccccc}
\varphi, & A^2 , & \overline{A^2} \\
& A^4, & \overline{A^4}
\end{array}\mid -8\right)= \frac{1}{q} + \frac{g(A)g(\overline{A}\varphi)}{qg(A^3)g(\varphi\overline{A^3})}
+ \frac{g(\overline{A})g(A\varphi)}{qg(\overline{A^3})g(\varphi A^3)}.
\end{align}
Using Lemma  \ref{gj1} we find that 
\begin{align}\label{mtv43c1eq4}
&\frac{\overline{A^2}(4)J(\overline{A^2}, A^6)}{q^2J(A^2, A^2)}\left[J(A^2, A)^2 + J(A^2, A\varphi)^2\right]\notag\\
&= \frac{\overline{A^2}(4)}{q^2}\left[\frac{g(A)^2g(A^6)g(\overline{A^2})}{g(A^3)^2} + \frac{g(A\varphi)^2g(A^6)g(\overline{A^2})}{g(A^3\varphi)^2}\right]\notag\\
&= \frac{g(A)g(\overline{A}\varphi)}{qg(A^3)g(\varphi\overline{A^3})}
+ \frac{g(\overline{A})g(A\varphi)}{qg(\overline{A^3})g(\varphi A^3)}.
\end{align}
The last equality is obtained by using Lemma \ref{g10} on $g(\overline{A^2})$ and $g(A^6)$, and then we use Lemma \ref{g1}. 
We now complete the proof by combining \eqref{mtv43c1eq4} and \eqref{mtv43c1eq3}.
\end{proof}
\begin{proof}[Proof of Theorem \ref{Value-49}] 
Given that $S$ is a character which is a square and its order is strictly greater than $4$. Putting	$A= \overline{S^3}$, $B= \overline{S^2}\varphi$, $C= \overline{S^2}$ 
and $x= \frac{4}{2+\sqrt{3}}$ in Theorem \ref{MT43} and using the fact that $g(\varepsilon)= -1$ we have
\begin{align}\label{Value49eq1}
	&{_{2}}F_1\left(\begin{array}{ccccccc}
		\overline{S^3}, & \overline{S^2}\varphi \\
		& \overline{S^4}
	\end{array}\mid \frac{4}{2+\sqrt{3}}\right)^2
= \frac{(q-1)S(256)S(\sqrt{3}-2)\overline{S^5}(2+\sqrt{3})}{g(\overline{S^3})g(S^2\varphi)^2g(\overline{S})}\notag\\
&+\frac{(q-1)\varphi(-1)S(16)S^3(\sqrt{3}-2)\overline{S^3}(\sqrt{3}+2)g(S)}{g(\varphi)g(\overline{S})g(S^2\varphi)}{_{3}}F_2\left(\begin{array}{ccccccc}
		\overline{S^3}, & \overline{S^2}\varphi, & \overline{S} \\
		& \overline{S^4}, & \overline{S^2}
	\end{array}\mid 4\right)\notag\\
&-\frac{q\varphi(-1)S(16)S^3(\sqrt{3}-2)\overline{S^3}(\sqrt{3}+2)g(S)}{g(\varphi)g(\overline{S})g(S^2\varphi)}{_{4}}F_3\left(\begin{array}{ccccccc}
		\overline{S^3}, & \overline{S^2}\varphi, & \overline{S}, & \overline{S^2}\varphi \\
		& \overline{S^4}, & \overline{S^2}, & \overline{S^2}\varphi 
	\end{array}\mid 4\right).
\end{align}  
Now using \eqref{Greene-def-4} and \eqref{b5} we have 
\begin{align}\label{Value49eq2}
{_{4}}F_3\left(\begin{array}{ccccccc}
	\overline{S^3}, & \overline{S^2}\varphi, & \overline{S}, & \overline{S^2}\varphi \\
	& \overline{S^4}, & \overline{S^2}, & \overline{S^2}\varphi 
\end{array}\mid 4\right)
= -\frac{1}{q}{_{3}}F_2\left(\begin{array}{ccccccc}
\overline{S^3}, & \overline{S^2}\varphi, & \overline{S} \\
& \overline{S^4}, & \overline{S^2}
\end{array}\mid 4\right) + I_1,
\end{align}
where
\begin{align}\label{Value49eq3}
I_1&=\sum_{\chi}{\overline{S^3}\chi\choose \chi}{\overline{S^2}\varphi\chi\choose \overline{S^4}\chi}{\overline{S}\chi\choose \overline{S^2}\chi}\chi(4)\delta(\overline{S^2}\varphi\chi)\notag\\
&=-\frac{\varphi(-1)S(16)}{q}{\overline{S}\varphi\choose S^2\varphi}{S\varphi\choose \varphi}= -\frac{S(16)g(\varphi)g(\overline{S^2}\varphi)}{q^2g(S)g(\overline{S^3})}
= -\frac{\varphi(-1)S(16)g(\varphi)}{qg(S)g(\overline{S^3})g(S^2\varphi)}.
\end{align}
The above equality is obtained by putting $\chi= S^2\varphi$ and then using \eqref{b7}, Lemma \ref{g8}, and Lemma \ref{g1}. 
By combining \eqref{Value49eq1}, \eqref{g22}, \eqref{Value49eq2},  \eqref{Value49eq3} and Lemma \ref{g1}, we have
\begin{align}\label{Value49eq4}
&	{_{3}}F_2\left(\begin{array}{ccccccc}
		\overline{S^3}, & \overline{S^2}\varphi, & \overline{S} \\
		& \overline{S^4}, & \overline{S^2} 
	\end{array}\mid 4\right)\notag\\
	&= -\frac{\varphi(-1)S(16)g(\varphi)}{g(\overline{S^3})g(S^2\varphi)g(S)}+\frac{\varphi(-1)g(\overline{S})g(\varphi)g(\overline{S^2}\varphi)g(\sqrt{S})^2J(\sqrt{\overline{S}}, \sqrt{S^3}\varphi)^2}{q\overline{S}(1024)S(27)g(S)
	g(\sqrt{\overline{S^3}})^2J(\varphi, S)^2}\notag\\
	&\times\left\{
	\begin{array}{ll}
		0, & \hbox{if $q \equiv 11 \mod(12) $;} \\
		\displaystyle  \left[{S\choose \chi_3} + {S \choose \chi_3^2}\right]^2, & \hbox{if $q \equiv 1 \mod(12) $.}
	\end{array}\right.
\end{align}
Lemma \ref{gj1} and Lemma \ref{g1} yield
\begin{align}\label{Value49eq5}
	& \frac{\varphi(-1)S(1024)\overline{S}(27)g(\overline{S})g(\varphi)g(\overline{S^2}\varphi)g(\sqrt{S})^2J(\sqrt{\overline{S}}, \sqrt{S^3}\varphi)^2}{qg(S)g(\sqrt{\overline{S^3}})^2J(\varphi, S)^2}\notag\\
	&= \frac{q\varphi(-1)S(1024)\overline{S}(27)g(\overline{S})g(\overline{S^2}\varphi)g(\sqrt{S^3}\varphi)^2}{g(S)^3g(\sqrt{\overline{S^3}})^2g(\varphi)}\notag\\
	&= \frac{\varphi(-1)S(16)\overline{S}(27)g(\overline{S})g(\overline{S^2}\varphi)g(S^3)^2g(\varphi)}{qg(S)^3}.
\end{align}
The last equality is obtained by using Lemma \ref{g10} on $g(\sqrt{S^3}\varphi)$ and then using Lemma \ref{g1}. Combining \eqref{Value49eq5} and \eqref{Value49eq4} we have
\begin{align}\label{Value49eq6}
	&{_{3}}F_2\left(\begin{array}{ccccccc}
	\overline{S^3}, & \overline{S^2}\varphi, & \overline{S} \\
		& \overline{S^4}, & \overline{S^2} 
	\end{array}\mid 4\right)= -\frac{\varphi(-1)S(16)g(\varphi)}{g(\overline{S^3})g(S^2\varphi)g(S)}
	+ \frac{\varphi(-1)g(\overline{S})g(\overline{S^2}\varphi)g(S^3)^2g(\varphi)}{qS(27)\overline{S}(16) g(S)^3}\notag\\
	&\hspace{2cm}\times\left\{
	\begin{array}{ll}
		0, & \hbox{if $q \equiv 11 \mod(12) $;} \\
		\displaystyle  \left[{S\choose \chi_3} + {S \choose \chi_3^2}\right]^2, & \hbox{if $q \equiv 1 \mod(12) $.}
	\end{array}\right.
\end{align} 
Lemma \ref{g8} and \eqref{Greene-def-4} yield 
\begin{align}\label{Value49eq7}
	&{_{3}}F_2\left(\begin{array}{ccccccc}
		\overline{S^3}, & \overline{S^2}\varphi, & \overline{S} \\
		& \overline{S^4}, & \overline{S^2} 
	\end{array}\mid 4\right)= \frac{g(S^3)g(\varphi)}{g(S^2\varphi)g(S)}{_{3}}F_2\left(\begin{array}{ccccccc}
		\overline{S^3}, & \overline{S}, & \overline{S^2}\varphi \\
		& \overline{S^4}, & \overline{S^2} 
	\end{array}\mid 4\right).
\end{align} 
Using \eqref{Value49eq7} in \eqref{Value49eq6} and then using Lemma \ref{g1} we have
\begin{align}\label{Value49e1eq7}
	&{_{3}}F_2\left(\begin{array}{ccccccc}
		\overline{S^3}, & \overline{S}, &  \overline{S^2}\varphi \\
		& \overline{S^4}, & \overline{S^2} 
	\end{array}\mid 4\right)=-\frac{\varphi(-1)S(16)}{q}\notag\\
	&+ \frac{S(16)\overline{S}(27)g(\overline{S})g(S^3)}{g(S)^2}\left\{
	\begin{array}{ll}
		0, & \hbox{if $q \equiv 11 \mod(12) $;} \\
		\displaystyle  \left[{S\choose \chi_3} + {S \choose \chi_3^2}\right]^2, & \hbox{if $q \equiv 1 \mod(12)$.}
	\end{array}\right.
\end{align} 
Using Lemma \ref{gj1} and Lemma \ref{g1} we have 
\begin{align}\label{Value49e1eq8}
	\frac{J(\overline{S}, \overline{S})}{J(\overline{S^3}, S)}= \frac{g(\overline{S})g(S^3)}{g(S)^2}.
\end{align}
Finally, combining \eqref{Value49e1eq8} and \eqref{Value49e1eq7} we complete the proof.
\end{proof}	
	

\end{document}